\documentclass[a4paper,twoside]{article}

\usepackage[dvipsnames]{xcolor}

\usepackage{xspace}
\usepackage[ulem=normalem,draft]{changes}

\oddsidemargin 0.7cm
\evensidemargin 0.7cm
\textwidth 15cm
\textheight 20cm
\usepackage{amsthm,amsmath,amssymb}
\usepackage[bookmarks=true,bookmarksopen=true]{hyperref}
\usepackage{graphicx}
\usepackage{amssymb,amsmath}
\usepackage{url}
\usepackage{xspace}

\usepackage{xspace}

\numberwithin{equation}{section}
\theoremstyle{plain}
\newtheorem{theorem}{Theorem}[section]
\newtheorem{lemma}[theorem]{Lemma}
\newtheorem{corollary}[theorem]{Corollary}
\newtheorem{proposition}[theorem]{Proposition}
\newtheorem{definition}[theorem]{Definition}

\newtheorem{remark}[theorem]{Remark}

\newcommand{\Pb}{\mbox{\rm (P)}\xspace}
\newcommand{\PbT}{\mbox{\rm (P$_T$)}\xspace}
\newcommand{\Uad}{\mathcal{U}_{ad}}
\newcommand{\UadT}{{\mathcal{U}_{ad,T}}}
\newcommand{\K}{\mathcal{K}}
\newcommand{\I}{0,\infty}
\newcommand{\IT}{0,T}

\newcommand{\sign}{\operatorname{sign}}

\newcommand{\dx}{\,\mathrm{d}x}
\newcommand{\dt}{\,\mathrm{d}t}
\newcommand{\ds}{\,\mathrm{d}s}

\newcommand{\black}{\color{black}}

\begin{document}
	
\title{Infinite Horizon Optimal Control Problems for a Class of Semilinear Parabolic Equations
\thanks{The first author was supported by MCIN/ AEI/10.13039/501100011033/ under research project PID2020-114837GB-I00. The second was supported by the ERC advanced grant 668998 (OCLOC) under the EU’s H2020 research program.}}

\author{Eduardo Casas\thanks{Departamento de Matem\'{a}tica Aplicada y Ciencias de la Computaci\'{o}n, E.T.S.I. Industriales y de Telecomunicaci\'on, Universidad de Cantabria, 39005 Santander, Spain (eduardo.casas@unican.es).}
    \and Karl Kunisch\thanks{Institute for Mathematics and Scientific Computing, University of Graz, Heinrichstrasse 36, A-8010 Graz, Austria (karl.kunisch@uni-graz.at).}}

\date{}
\maketitle

\begin{abstract}
Infinite horizon open loop optimal control problems for semilinear parabolic equations  are investigated. The controls are subject to a cost-functional which promotes sparsity in time.  The focus is put on deriving first order optimality conditions. This is achieved without relying on a  well-defined control-to-state mapping in a neighborhood of minimizers. The technique of proof is based on the approximation of the original problem  by a family of finite horizon problems. The  optimality conditions allow to deduce sparsity properties of the optimal controls in time.

\end{abstract}

{\em{AMS classification:}} 35K58, 49J20, 49J52, 49K20.


{\em{ Keywords:}} Semilinear parabolic equations, optimal control, infinite horizon, sparse controls.

\pagestyle{myheadings} \thispagestyle{plain} \markboth{E.~CASAS AND K.~KUNISCH}{Infinite Horizon Control Problems}

\section{Introduction}
\label{S1}

In this paper we continue our investigations of infinite horizon optimal control problems with sparsity promoting cost functionals, which we commenced in \cite{Casas-Kunisch2017}. While in the earlier work the nonlinearities appearing in the state equation were restricted to be polynomials we now allow general nonlinearities satisfying appropriate properties at the origin and asymptotically. Moreover, differently from \cite{Casas-Kunisch2017}, constraints on the controls are imposed.

While finite horizon open loop optimal control problems with partial differential equations as constraints have received a tremendous amount of attention over the last fifty years, extremely little attention was paid to infinite horizon problems, see, however, \cite[Chapter III.6]{JL1971}, and  \cite{BKP2018} for an analysis of  bilinear optimal control problems. This is different for problems involving the control of ordinary differential equations. The analysis of infinite horizon optimal control problems may have started with Halkin's work \cite{halkin1974}. The motivation for investigating infinite horizon problems relates to stabilization problems
as well as to problems arising in the economic and biological sciences,
where placing a finite bound  on the time horizon introduces an artificial ambiguity.
Some examples in economy, biology, and engineering motivating this study, and many aspects of extensive earlier work were described in \cite{CHL1991}. More recent contributions can be found for instance in \cite{AKV17} and \cite{BCF2018}. In passing, let us recall that  the infinite horizon problem is well investigated  for closed loop control, leading to the Riccati synthesis for the linear-quadratic regulator problem and the stationary Hamilton-Jacobi-Bellman equation otherwise.

In the present work we formulate an optimal tracking problem as an infinite horizon optimal control problem. The control cost is  chosen in such a manner that  it enhances sparsity in time. As a consequence the control  will shut down to zero rather than being small, as  it  would be the case for a quadratic cost, for instance. A first difficulty that needs to be addressed is the  existence of feasible controls. This relates to the stabilizability problem. While this is not in the focus of the present work we establish certain sufficient conditions where stabilizability holds. The central difficulty then is to provide first order optimality conditions. They are easily conjectured but challenging to verify. Here we follow the technique of formulating a family of finite horizon problems and analyze the asymptotic behavior as the horizon tends to infinity.

We now introduce the optimal control problem which will be analyzed in the present work:
\[
   \Pb \qquad  \min_{u \in \Uad} J(u) = \frac{1}{2}\int_0^\infty\int_\Omega (y_u - y_d)^2\dx\dt + \kappa\int_0^\infty\Big(\int_\omega u^2\ dx\Big)^{1/2}\dt,
\]
where $\kappa > 0$, $y_d \in L^2(\Omega \times (\I))$, and
\[
\Uad = \{u \in L^\infty(0,\infty;L^2(\omega)) : u(t) \in \K \ \text{for a.a. } t \in (\I)\}.
\]
Above $\K$ denotes a closed, convex, and bounded set in $L^2(\Omega)$, and $y_u$ is the solution of the following parabolic equation:
\begin{equation}
\left\{
\begin{array}{l}
\displaystyle
\frac{\partial y}{\partial t}- \Delta y + ay + f(x,t,y) = g + u\chi_\omega \mbox{ in } Q = \Omega \times (\I),\\ \partial_ny = 0  \mbox{ on } \Sigma = \Gamma \times (\I),\ y(0) = y_0  \mbox{ in } \Omega.
\end{array}
\right.
\label{E1.1}
\end{equation}
Here \black $\Omega$ is a bounded domain in $\mathbb{R}^n$, $1 \le n \le 3$, with a Lipschitz boundary $\Gamma$, $\omega$ is a measurable subset of $\Omega$ with positive Lebesgue measure, $\chi_\omega$ denotes the characteristic function of $\omega$, $a \in L^\infty(\Omega)$, $0 \le a \not\equiv 0$, $g \in L^2(Q) \cap L^\infty(\I;L^2(\Omega))$, and $y_0 \in L^\infty(\Omega)$. The conditions on the nonlinear term $f(x,t,y)$ will be given below. For every $u \in \Uad$, the symbol $u\chi_\omega$ is defined as follows:
\[
(u\chi_\omega)(x,t) = \left\{\begin{array}{cl}u(x,t) & \text{if } (x,t) \in  Q_\omega = \omega \times (\I),\\0 & \text{otherwise.}\end{array}\right.
\]

Possible choices for $\K$ include
\begin{align}
&\K = B_\gamma = \{v \in L^2(\omega) : \|v\|_{L^2(\omega)} \le \gamma\}, \ 0 < \gamma < \infty,\label{E1.2}\\
&\K = \{v \in L^2(\omega) : \alpha \le v(x) \le \beta \text{ for a.a. } x \in \omega\},\ \alpha < 0 < \beta.\label{E1.3}
\end{align}

In this paper, all the results remain valid if we replace the operator $-\Delta$ and the normal derivative $\partial_n$ by a more general elliptic operator $Ay = -\sum_{i, j = 1}^n\partial_{x_j}[a_{ij}(x)\partial_{x_i}y]$ and its associated normal derivative $\partial_{n_A}$ with  the coefficients $a_{ij} \in L^\infty(\Omega)$ satisfying the usual ellipticity condition.

The contents of the paper are structured as follows. Section 2 contains an analysis of the state equation and existence of a solution for \Pb. The auxiliary finite horizon problems and their optimality systems are presented in Section 3. Section 4 contains the convergence analysis of the finite horizon problems. An optimality system for the original problem can then be deduced in Section 5. Its interpretation allows to derive the sparsity in time of the optimal controls.

It is worth pointing out that these conditions are obtained without relying on a well-defined control-to-state mapping in an open  neighborhood of  optimal controls. In the presence of the high generality that we allow for our nonlinearity $f$, at present we do not know whether such a neighborhood exists.

\section{Well-Posedness of the State Equation and Problem \Pb}
\label{S2}
\setcounter{equation}{0}

We define the notion of solution for \eqref{E1.1}. First, let us fix some notation. We denote by $L_{loc}^2(\I;H^1(\Omega))$  the space of functions $y$ belonging to $L^2(\IT;H^1(\Omega))$ for $0 < T < \infty$. Analogously we define $L^p_{loc}(\I;L^2(\Omega))$ for $1 \le p \le \infty$.

\begin{definition}
We call $y$ a solution to \eqref{E1.1} if $y \in L_{loc}^2(\I;H^1(\Omega))$, and for every $T > 0$ the restriction of $y$ to $Q_T = \Omega \times (\IT)$ belongs to $W(0,T) \cap L^\infty(Q_T)$ and satisfies the following equation in the variational sense
\begin{equation}
\left\{
\begin{array}{l}
\displaystyle\frac{\partial y}{\partial t}- \Delta y + ay + f(x,t,y) = g + u\chi_\omega \mbox{ in } Q_T,\\ \partial_ny = 0 \mbox{ on } \Sigma_T,\ y(0) = y_0  \mbox{ in } \Omega.
\end{array}
\right.
\label{E2.1}
\end{equation}
\label{D2.1}
\end{definition}

Here $W(0,T)$ denotes the space of functions $y \in L^2(0,T;H^1(\Omega))$ such that $\frac{\partial y}{\partial t} \in L^2(0,T;H^1(\Omega)^*)$.

In order to prove the existence and uniqueness of a solution to \eqref{E1.1} we make the following assumptions:  $f:Q \times \mathbb{R} \longrightarrow \mathbb{R}$ is a Carath\'eodory function of class $C^1$ with respect to the last variable satisfying
\begin{align}
&f(x,t,0) = 0,\label{E2.2}\\
&\left\{\hspace{-0.2cm}\begin{array}{l}\exists M_f > 0, \, \exists \delta > 0,\, \text{and a } C^1\, \text{function}\, \hat f:\mathbb{R} \longrightarrow \mathbb{R} \text{ such that } \forall |y| \ge M_f\\  \delta |\hat f(y)| \le |f(x,t,y)| \le |\hat f(y)|,\, \sign{\hat f(y)} = \sign{f(x,t,y)} = \sign{y},\, \hat f'(y) \ge 0,\\\displaystyle\frac{\partial f}{\partial y}(x,t,y) \ge 0,\end{array}\right.\label{E2.3}\\
&\forall M > 0 \ \exists C_M \text{ such that } \left|\frac{\partial f}{\partial y}(x,t,y)\right| \le C_M \ \forall |y| \le M,\label{E2.4}
\end{align}
for almost every $(x,t) \in Q$.

Let us give some examples that fulfilled the assumptions \eqref{E2.2}--\eqref{E2.4}. We start with a polynomial function of $y$ with coefficients depending on $(x,t)$:
\[
f(x,t,y) = \sum_{k = 1}^{2m + 1}a_k(x,t)y^k  \text{ with }  a_k \in L^\infty(Q) \ \forall k \ge 1  \text{ and } a_{2m + 1}(x,t) \ge \delta_0 > 0 \text{ in } Q.
\]
Setting $K = \max_{1 \le k \le 2m+1}\|a_k\|_{L^\infty(Q)}$, $M_f = \max\Big\{1,\frac{4mK}{\delta_0}\Big\}$,  $\hat f(y) = (2m+1)Ky^{2m+1}$ ,
and $\delta = \frac{\delta_0}{2(2m+1)K}$, it is easy to check that \eqref{E2.3} holds.

Next, given $\eta \in L^\infty(Q)$ such that $\eta(x,t) \ge \delta_0 > 0$ for a.a.~$(x,t) \in Q$, we consider the following two examples
\[
f(x,t,y) =\eta(x,t)(\text{\rm e}^y - 1)\ \text{ and } \ f(x,t,y) = \eta(x,t)(y^3 + 10^3\sin(y)).
\]
For the first case \eqref{E2.3} holds with $M_f = 0$, $\delta = \frac{\delta_0}{\|\eta\|_{L^\infty(Q)}}$, and $\hat f(y) = \|\eta\|_{L^\infty(Q)}(\text{\rm e}^y - 1)$.

\noindent For the second case we take $M_f = 10\sqrt{\frac{10}{3}}$, $\delta = \frac{\delta_0}{4\|\eta\|_{L^\infty(Q)}}$, and $\hat f(y) = 2\|\eta\|_{L^\infty(Q)}y^3$.

\begin{remark}
\noindent 1) Assumptions \eqref{E2.2} and  \eqref{E2.3} can be replaced by the following
\[
\left\{\hspace{-0.2cm}\begin{array}{l}\exists M_f > 0, \, \exists \delta > 0,\, \text{and a } C^1\, \text{function}\, \hat f:\mathbb{R} \longrightarrow \mathbb{R} \text{ such that } \forall |y| \ge M_f\\  \delta|\hat f(y)| \le |f(x,t,y) - f(x,t,0)| \le |\hat f(y)|,\ \hat f'(y) \ge 0,\\
\sign{\hat f(y)} = \sign{y},\, y[f(x,t,y) - f(x,t,0)] \ge 0, \displaystyle\frac{\partial f}{\partial y}(x,t,y) \ge 0,\end{array}\right.
\]for almost every $(x,t) \in Q$. Indeed, under these hypotheses we can replace $f(x,t,y)$ by $f(x,t,y) - f(x,t,0)$ and $g(x,t)$ by $g(x,t) - f(x,t,0)$ so that the new $f$ satisfies \eqref{E2.2}--\eqref{E2.4}, and the new $g$ belongs to $L^2(Q)$.

\noindent 2) Let us observe that \eqref{E2.2}--\eqref{E2.4} imply that
\begin{equation}
\frac{\partial f}{\partial y}(x,t,y) \ge -C_{M_f} \text{ for a.a. } (x,t) \in Q \text{ and } \forall y \in \mathbb{R}.
\label{E2.5}
\end{equation}

\noindent 3) If $y \in L^\infty(Q_T)$, then \eqref{E2.2} and \eqref{E2.4} imply that $f(\cdot,\cdot,y) \in L^\infty(Q_T)$ as well.
\label{R2.1}
\end{remark}

\begin{remark}
From the assumption $0 \le a \not\equiv 0$ we infer the existence of a constant $C_a$ depending on $a$ such that
\begin{equation}
\Big(\int_\Omega[|\nabla y|^2 + ay^2]\dx\Big)^{1/2} \ge C_a\|y\|_{H^1(\Omega)}\ \forall y \in H^1(\Omega).
\label{E2.6}
\end{equation}
For homogeneous \black Dirichlet boundary condition, the assumption $a \not\equiv 0$ is not required.
\label{R2.2}
\end{remark}

\begin{theorem}
Under the previous assumptions on $f$, equation \eqref{E1.1} has a unique solution $y$ for every  $u \in L^2(Q_\omega) \cap L^\infty(\I;L^2(\omega))$\black . Moreover, if $y \in L^2(Q)$ then the following properties hold
\begin{align}
&\exists\, C_f \text{ such that } \|f(\cdot,\cdot,y)\|_{L^2(Q)} \le C_f\Big(\|u\|_{L^2(Q_\omega)} + \|g\|_{L^2(Q)} + \|y\|_{L^2(Q)}\Big),\label{E2.7}\\
&y \in L^2(\I;H^1(\Omega)) \cap C([0,\infty),L^2(\Omega)) \text{ and } \frac{\partial y}{\partial t} \in L^2(\I,H^1(\Omega)^*),\label{E2.8}\\
&\lim_{t \to \infty}\|y(t)\|_{L^2(\Omega)} = 0.\label{E2.9}
\end{align}
\label{T2.1}
\end{theorem}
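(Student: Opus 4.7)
The plan is to handle well-posedness first, and then to derive the three asymptotic identities under the additional assumption $y\in L^2(Q)$. For well-posedness on an arbitrary finite horizon $(0,T)$, I would truncate $f$ into a sequence of globally Lipschitz approximations $f_k$ (equal to $f$ on $\{|y|\le k\}$, extended linearly outside while preserving both the sign of $f$ and the monotone shift $\partial_y f_k\ge -C_{M_f}$), solve the resulting semilinear parabolic equation for $y_k\in W(0,T)\cap L^\infty(Q_T)$ via classical monotone-operator theory together with parabolic $L^\infty$-regularity in dimensions $n\le 3$, and derive a uniform $L^\infty(Q_T)$-bound. The latter is the decisive step: by testing the equation with $(y_k-M)^+$ at a level $M>M_f$ chosen so that $\delta\hat f(M)$ dominates the data, the sign and lower-growth information in \eqref{E2.3} absorbs the right-hand side through $f_k(y_k)(y_k-M)^+\ge\delta\hat f(M)(y_k-M)^+$ on $\{y_k>M\}$, forcing $(y_k-M)^+\equiv 0$; a symmetric argument disposes of $(y_k+M)^-$. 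Then $f_k(y_k)=f(y_k)$ for $k\ge M$, Aubin--Lions extracts a limit solving the equation on $(0,T)$, and concatenation over $T_j\to\infty$ yields a solution in the sense of Definition \ref{D2.1}. Uniqueness follows from subtracting two solutions, using $\partial_y f\ge -C_{M_f}$ from \eqref{E2.5}, and Gronwall.

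Now suppose $y\in L^2(Q)$. Testing \eqref{E2.1} with $y$ on $(0,T)$, using $f(x,t,y)\,y\ge -C_{M_f}\,y^2$ from \eqref{E2.5} together with the coercivity \eqref{E2.6}, and letting $T\to\infty$ yields
\[
\tfrac12\|y(T)\|_{L^2(\Omega)}^2+C_a^2\|y\|_{L^2(0,\infty;H^1(\Omega))}^2\le\tfrac12\|y_0\|_{L^2(\Omega)}^2+\|g+u\chi_\omega\|_{L^2(Q)}\|y\|_{L^2(Q)}+C_{M_f}\|y\|_{L^2(Q)}^2,
\]
hence $y\in L^2(0,\infty;H^1(\Omega))$. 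For \eqref{E2.7}, I would replace $\hat f$ if necessary by $\hat f-\hat f(0)$ (which does not alter any of the conditions in \eqref{E2.3} on $\{|y|\ge M_f\}$) so as to arrange $\hat f(0)=0$, giving the local bound $|\hat f(y)|\le C_{M_f}|y|$. Testing \eqref{E2.1} with $\hat f(y)$—time-independent, with primitive $\hat F(y)=\int_0^y\hat f(s)\ds$ bounded below—makes the time-derivative term $\int_0^T\!\!\int_\Omega\partial_t y\,\hat f(y)=\int_\Omega[\hat F(y(T))-\hat F(y_0)]$ controlled and the gradient term $\int_Q\hat f'(y)|\nabla y|^2$ nonnegative. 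The pinch $\delta|\hat f(y)|\le|f(x,t,y)|\le|\hat f(y)|$ then yields $\int_Q f(\cdot,\cdot,y)\hat f(y)\ge\delta\int_{\{|y|\ge M_f\}}|\hat f(y)|^2$, which combined with Young's inequality on $\int_Q(g+u\chi_\omega)\hat f(y)$ and the already established $H^1$-bound closes the estimate on $\{|y|\ge M_f\}$; on the complementary region $|f(x,t,y)|\le C_{M_f}|y|\in L^2(Q)$, yielding \eqref{E2.7}. Rewriting the equation as $\partial_t y-\Delta y=g+u\chi_\omega-ay-f(\cdot,\cdot,y)\in L^2(Q)$ then gives \eqref{E2.8} together with $y\in C([0,\infty);L^2(\Omega))$ via Lions--Magenes. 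Finally, $\frac{d}{dt}\|y(t)\|_{L^2(\Omega)}^2=2\langle\partial_t y,y\rangle\in L^1(0,\infty)$ forces $\|y(t)\|_{L^2(\Omega)}^2$ to have a finite limit as $t\to\infty$, and the integrability $y\in L^2(0,\infty;L^2(\Omega))$ forces this limit to be zero.

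The main obstacle is \eqref{E2.7}. The explicit $(x,t)$-dependence of $f$ prevents direct testing by $f$ itself, since no primitive in $y$ alone produces the term $\partial_t y\cdot f(x,t,y)$; the analysis must therefore be routed through $\hat f(y)$, which admits a natural primitive but only directly controls $|f|$ on $\{|y|\ge M_f\}$. Ensuring that the local contribution on $\{|y|\le M_f\}$—a set of possibly infinite $(x,t)$-measure—remains harmless requires the preparatory normalization $\hat f(0)=0$ so that $|\hat f(y)|\le C_{M_f}|y|\in L^2(Q)$ locally, and careful bookkeeping of the lower bound of $\hat F$ against the boundary terms at $t=0$ and $t=T$. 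Balancing these with the gain $\delta\int_{\{|y|\ge M_f\}}|\hat f(y)|^2$ is where the delicate work lies; once accomplished, the remaining parts of the theorem are standard consequences.
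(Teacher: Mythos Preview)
Your overall strategy is sound and the ordering you propose---getting $y\in L^2(0,\infty;H^1(\Omega))$ first by testing with $y$ and using $f(x,t,y)y\ge -C_{M_f}y^2$, then attacking \eqref{E2.7}, then deducing the rest of \eqref{E2.8} and \eqref{E2.9}---is a legitimate alternative to the paper's route. The paper instead establishes \eqref{E2.7} \emph{first}, without any prior $H^1$-information, by testing with the one-sided truncation $z=f_M(y)=\hat f(\max\{y,M\})-\hat f(M)$ (with $M=\max\{M_f,\|y_0\|_{L^\infty}\}$), which vanishes identically on $\{y\le M\}$; this kills the troublesome local region outright, and the gain $\int f(x,t,y)z\ge\delta\int \hat f(y)z$ on $\{y>M\}$ is converted into $\frac{\delta}{2}\int_{B_t}\hat f(y)^2$ after a further decomposition $\Omega_t^{M,+}=A_t\cup B_t$ according to whether $\hat f(y)<2\hat f(M)$ or not. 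Only then does the paper test with $y$ to get \eqref{E2.8}, now moving $f(\cdot,\cdot,y)\in L^2(Q)$ to the right-hand side. Your approach trades the truncation trick for a preliminary $H^1$-bound and a global test with $\hat f(y)$; both are viable, and your argument for \eqref{E2.9} via $\frac{d}{dt}\|y(t)\|^2_{L^2(\Omega)}\in L^1(0,\infty)$ is equally clean.

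There is one genuine slip. Replacing $\hat f$ by $\hat f-\hat f(0)$ \emph{does} alter the conditions in \eqref{E2.3} on $\{|y|\ge M_f\}$: the pinch $\delta|\hat f(y)|\le|f(x,t,y)|\le|\hat f(y)|$ is not invariant under translation (for $y>M_f$ with $\hat f(0)>0$ the upper bound becomes stricter and can fail). The correct normalization is to \emph{redefine} $\hat f$ on the open interval $(-M_f,M_f)$, which is unconstrained by \eqref{E2.3}; since $\hat f(-M_f)<0<\hat f(M_f)$ by the sign condition, you can interpolate by a $C^1$ monotone function through $0$ at $0$. This simultaneously gives $\hat f(0)=0$, $\hat f'\ge 0$ on all of $\mathbb{R}$ (so the gradient term is nonnegative and you do not actually need the preliminary $H^1$-bound there), and $|\hat f(y)|\le C|y|$ on $[-M_f,M_f]$ for some constant $C$ depending on the modified $\hat f$---not the constant $C_{M_f}$ from \eqref{E2.4}, which bounds $|\partial_y f|$ rather than $|\hat f'|$. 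With that correction your argument for \eqref{E2.7} goes through.
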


\begin{proof}
Due to  $u \in L^\infty(\I;L^2(\omega))$ and $g \in L^\infty(\I;L^2(\Omega))$, under the assumptions on $f$ and inequality \eqref{E2.5}, the proof of existence and uniqueness of a solution $y \in W(0,T) \cap L^\infty(Q_T)$ for \eqref{E2.1} for every $T>0$ is standard. This proves the first statement of the theorem. The rest of the proof, under the assumption $y \in L^2(Q)$, is divided into three steps.

{\em Step 1 - $f(\cdot,\cdot,y) \in L^2(Q)$.} Given $M = \max\{M_f,\|y_0\|_{L^\infty(\Omega)}\}$, we define the following sets:
\begin{align}
&Q^M = \{(x,t) \in Q : |y(x,t)| > M\},\ \ Q^M_T = Q_T \cap Q^M,\label{E2.10}\\
&\Omega_t^M = \{x \in \Omega : (x,t) \in Q^M\}\text{ for } t \in (0,\infty). \label{E2.11}
\end{align}
Using \eqref{E2.2}, \eqref{E2.4}, and the mean value theorem we have for $\theta(x,t) \in (0,1)$
\begin{equation}
\int_{Q \setminus Q^M}|f(x,t,y)|^2\dx\dt = \int_{Q \setminus Q^M}\Big|\frac{\partial f}{\partial y}(x,t,\theta y)\Big|^2y^2\dx\dt \le C_M^2\|y\|^2_{L^2(Q)}.
\label{E2.12}
\end{equation}
It remains to prove that $\int_{Q_T^M}|f(x,t,y)|^2\dx\dt$ is uniformly bounded with respect to $T$. For this purpose we consider the decomposition $Q_T = Q_T^{M,+} \cup Q_T^{M,-}$, where
\[
Q_T^{M,+} = \{(x,t) \in Q_T^M : y(x,t) > M\} \text{ and } Q_T^{M,-} = \{(x,t) \in Q_T^M : y(x,t) < -M\}.
\]
We will prove that $\int_{Q_T^{M,+}}|f(x,t,y)|^2\dx\dt \le C$ for some constant $C$ independent of $T > 0$. Similar arguments can be applied to prove the uniform boundedness on the sets $Q_T^{M,-}$.  We define the function $f_M:\mathbb{R} \longrightarrow \mathbb{R}$ by $f_M(s) = \hat f(\max\{s,M\}) - \hat f(M)$.  Then $f_M$ is locally Lipschitz and $f_M(s) = 0$ for $s \le M$. Furthermore, from \eqref{E2.4} we get that $f_M(s) \ge 0$ for $s \ge M$ and $f'_M(s) \ge 0$ for almost all $s \in \mathbb{R}$.

We also introduce the $C^1$ function $F_M:\mathbb{R} \longrightarrow \mathbb{R}$ given by $F_M(s) = \int_{-\infty}^sf_M(t)\dt$. Then, we have that $F_M(s) = 0$ for every $s \le M$. In particular, due to the choice of $M$ we have that $F_M(y_0(x)) = 0$ for almost all $x \in \Omega$. Since $f_M(s) \ge 0$ $\forall s\in \mathbb{R}$, we also have that $F_M(s) \ge 0$ for every $s \in \mathbb{R}$.

Due to the embedding $W(0,T) \subset C([0,T];L^2(\Omega))$, we find that $\lim_{t \to 0}y(t) = y_0$ in $L^2(\Omega)$. Hence, we can take a sequence of points $\{t_k\}_{k = 1}^\infty \subset (0,T)\black$ converging to 0 such that $y(x,t_k) \to y_0(x)$ for almost all $x \in \Omega$. We set $z(x,t) =f_M(y(x,t))$. Since $y \in L^\infty(Q_T) \cap H^1(\Omega \times (t_k,T))$ for every $k$ (see, for instance, \cite[Corollary III.2.4]{Showalter1997}) and $f_M$ is locally Lipschitz, we deduce that $z \in L^\infty(Q_T) \cap H^1(\Omega \times (t_k,T))$ as well. Testing the equation \eqref{E2.1} with $z$ and integrating in $\Omega \times (t_k,T)$ we infer
\begin{align}
&\int_{t_k}^T\int_\Omega\frac{\partial y}{\partial t}z\dx\dt + \int_{t_k}^T\int_\Omega[\nabla y\nabla z + ayz]\dx\dt\notag\\
& + \int_{t_k}^T\int_\Omega f(x,t,y)z\dx\dt = \int_{t_k}^T\int_\Omega (g + \chi_\omega u)z\dx\dt. \label{E2.13}
\end{align}
We study  the first two terms of the above identity. From the definition of $F_M$ we get
\[
\int_{t_k}^T\int_\Omega\frac{\partial y}{\partial t}z\dx\dt = \int_{t_k}^T\frac{d}{\dt}\int_\Omega F_M(y) \dx\dt = \int_\Omega F_M(y(T)) \dx - \int_\Omega F_M(y(t_k)) \dx.
\]
Taking limits in this equality when $k \to \infty$ and using that $F_M(y_0) = 0$ \black we infer
\begin{equation}
\lim_{k \to \infty}\int_{t_k}^T\int_\Omega\frac{\partial y}{\partial t}z\dx\dt = \int_\Omega F_M(y(T)) \dx \ge 0.
\label{E2.14}
\end{equation}

To deal with the second term we observe that $z(x,t) = 0$ if $x \not\in \Omega_t^{M,+} = \{x \in \Omega : (x,t) \in Q^{M,+}_T\}$. Using  that $y(x,t) \ge M$ and $z(x,t) \ge 0$ in $\Omega_t^{M,+}$ we infer with \eqref{E2.3}
\begin{align}
&\lim_{k \to \infty}\int_{t_k}^T\int_\Omega[\nabla y\nabla z + ayz]\dx\dt = \int_0^T\int_\Omega[\nabla y\nabla z + ayz]\dx\dt\notag\\
&=\int_0^T\int_{\Omega_t^{M,+}}[\hat f'(y)|\nabla y|^2 + ayz]\dx\dt \ge 0. \label{E2.15}
\end{align}

Passing to the limit in\eqref{E2.13} as $k \to \infty$,  we deduce with \eqref{E2.14}, \eqref{E2.15},  and \eqref{E2.3}
\begin{align}
&\delta\int_0^T\int_{\Omega_t^{M,+}} \hat f(y)[\hat f(y) - \hat f(M)]\dx\dt = \delta\int_0^T\int_\Omega \hat f(y)z\dx\dt  \le \int_0^T\int_\Omega f(x,t,y)z\dx\dt\notag\\
&\le \int_0^T\int_\Omega (g + \chi_\omega u)z\dx\dt = \int_0^T\int_{\Omega_t^{M,+}} (g + \chi_\omega u)z\dx\dt.
\label{E2.16}
\end{align}
Observe that \black
\begin{equation}
|Q^M| \le \frac{1}{M^2}\int_{Q^M}y^2\dx\dt \le \frac{1}{M^2}\|y\|^2_{L^2(Q)}.
\label{E2.17}
\end{equation}

Let us set $\Omega_t^{M,+} = A_t \cup B_t$ with
\[
A_t = \{x \in \Omega_t^{M,+} : \hat f(y(x,t)) < 2\hat f(M)\} \text{ and } B_t = \{x \in \Omega_t^{M,+} : \hat f(y(x,t)) \ge 2\hat f(M)\}.
\]

If $x \in B_t$, then $-\hat f(M) \ge -\frac{1}{2}\hat f(y(x,t))$ and, consequently, $\hat f(y(x,t)) - \hat f(M) \ge \frac{1}{2}\hat f(y(x,t))$ holds. This yields
\begin{equation}
\frac{1}{2}\int_0^T\int_{B_t}\hat f(y)^2\dx\dt \le \int_0^T\int_{B_t}\hat f(y)[\hat f(y) - \hat f(M)]\dx\dt.
\label{E2.18}
\end{equation}
Using \eqref{E2.18} and then \eqref{E2.16} and the fact that $\hat f(y)[\hat f(y) - \hat f(M)] \ge 0$ in $A_t$ we infer \black
\begin{align*}
&\frac{\delta}{2}\int_0^T\int_{B_t}\hat f(y)^2\dx\dt \le \delta\int_0^T\int_{B_t}\hat f(y)[\hat f(y) - \hat f(M)]\dx\dt\\
&= \delta\int_0^T\int_{\Omega^{M,+}_t}\hat f(y)[\hat f(y) - \hat f(M)]\dx\dt - \delta\int_0^T\int_{A_t}\hat f(y)[\hat f(y) - \hat f(M)]\dx\dt\\
&\le \int_0^T\int_{\Omega_t^{M,+}}(g + \chi_\omega u)z\dx\dt \le \int_0^T\int_{A_t}(g + \chi_\omega u)z\dx\dt + \int_0^T\int_{B_t}(g + \chi_\omega u)z\dx\dt\\
& \le \|g + \chi_\omega u\|_{L^2(Q)}\left[\left(\int_0^T\int_{A_t}z^2\dx\dt\right)^{1/2} + \left(\int_0^T\int_{B_t}z^2\dx\dt\right)^{1/2}\right]\\
&\le \hat f(M)\sqrt{|Q^M|}\|g + \chi_\omega u\|_{L^2(Q)} + \frac{1}{\delta}\|g + \chi_\omega u\|^2_{L^2(Q)} + \frac{\delta}{4}\int_0^T\int_{B_t}z^2\dx\dt\\
&\le \big(\frac{1}{2} + \frac{1}{\delta}\big)\|g + \chi_\omega u\|^2_{L^2(Q)} + \frac{\hat f(M)^2}{2M^2}\|y\|^2_{L^2(Q)} + \frac{\delta}{4}\int_0^T\int_{B_t}\hat f(y)^2\dx\dt.
\end{align*}
From here we obtain
\[
\int_0^T\int_{B_t}\hat f(y)^2\dx\dt \le \big(\frac{2}{\delta} + \frac{4}{\delta^2}\big) \|g + \chi_\omega u\|^2_{L^2(Q)} + \frac{2\hat f(M)^2}{\delta M^2}\|y\|^2_{L^2(Q)}.
\]
In $A_t$, with \eqref{E2.17} we deduce the estimate
\[
\int_0^T\int_{A_t}\hat f(y)^2\dx\dt \le 4\hat f(M)^2|Q^M| \le \frac{4\hat f(M)^2}{M^2}\|y\|^2_{L^2(Q)}.
\]
These two estimates and the analogous ones in $Q_T^{M,-}$  along  with \eqref{E2.3} and \eqref{E2.12} lead to
\begin{equation}
\|f(\cdot,\cdot,y)\|_{L^2(Q)} \le \|\hat f(y)\|_{L^2(Q)} \le C_f\Big(\|u\|_{L^2(Q_\omega)} + \|g\|_{L^2(Q)} + \|y\|_{L^2(Q)}\Big).
\label{E2.19}
\end{equation}

{\em Step 2. Proof of \eqref{E2.8}.} For every $t > 0$ we have with \eqref{E2.1}
\begin{align*}
&\frac{1}{2}\|y(t)\|^2_{L^2(\Omega)} + C_a^2\int_0^t\|y\|^2_{H^1(\Omega)}\dt \le \frac{1}{2}\|y(t)\|^2_{L^2(\Omega)} + \int_0^t\int_\Omega[|\nabla y|^2 + ay^2]\dx\ds\\
& = \int_0^t\int_\Omega[g + \chi_\omega u - f(x,t,y)]y\dx\ds + \frac{1}{2}\|y_0\|^2_{L^2(\Omega)}\\
& \le \|g + \chi_\omega u - f(\cdot,\cdot,y)\|_{L^2(Q)}\Big(\int_0^t\|y\|^2_{H^1(\Omega)}\ds\Big)^{1/2} + \frac{1}{2}\|y_0\|^2_{L^2(\Omega)}\\
&\le \frac{1}{2C_a^2}\|g + \chi_\omega u - f(\cdot,\cdot,y)\|^2_{L^2(Q)} + \frac{C_a^2}{2}\int_0^t\|y\|^2_{H^1(\Omega)}\ds + \frac{1}{2}\|y_0\|^2_{L^2(\Omega)},
\end{align*}
what implies
\[
\|y(t)\|^2_{L^2(\Omega)} + C_a^2\int_0^t\|y\|^2_{H^1(\Omega)}\dt \le \frac{1}{C_a^2}\|g + \chi_\omega u - f(\cdot,\cdot,y)\|^2_{L^2(Q)} + \|y_0\|^2_{L^2(\Omega)}.
\]
Using that $y \in W(0,T) \subset C([0,T];L^2(\Omega))$ for every $T < \infty$ along with the above inequality we infer that $y \in L^2(\I;H^1(\Omega)) \cap C([0,\infty);L^2(\Omega))$. Finally, we have
\[
\frac{\partial y}{\partial t} = g + \chi_\omega u - f(\cdot,\cdot,y) + \Delta y - ay \in L^2(\I;H^1(\Omega)^*).
\]

{\em Step 3. Proof of \eqref{E2.9}.}  The fact that $y \in L^2(Q)$ implies the existence of a monotone increasing sequence of positive numbers $\{t_k\}_{k = 1}^\infty$ converging to $\infty$ such that $\|y(t_k)\|_{L^2(\Omega)} \to 0$ as $k \to \infty$. Given $T > 0$ and taking $t_k > T$ we get
\begin{align*}
&\|y(T)\|^2_{L^2(\Omega)} = \|y(t_k)\|^2_{L^2(\Omega)} - 2\int_T^{t_k}\int_\Omega \langle\frac{\partial y}{\partial t}(t),y(t)\rangle\dt\\
& \le \|y(t_k)\|^2_{L^2(\Omega)} + 2\|y\|_{L^2(T,\infty;H^1(\Omega))}\|\frac{\partial y}{\partial t}\|_{L^2(\I;H^1(\Omega)^*)},
\end{align*}
where $\langle\cdot,\cdot\rangle$ denotes the duality between $H^1(\Omega)^*$ and $H^1(\Omega)$. Taking the limit when $k \to \infty$ we get
\[
\|y(T)\|^2_{L^2(\Omega)} \le 2\|y\|_{L^2(T,\infty;L^2(\Omega))}\|\frac{\partial y}{\partial t}\|_{L^2(\I;H^1(\Omega)^*)}.
\]
Passing to the limit when $T \to \infty$, \eqref{E2.9} follows.
\end{proof}

Now, we analyze the existence of a solution for \Pb. First, we observe that Theorem \ref{T2.1} guaranties that  $J:L^\infty(\I;L^2(\omega)) \cap L^2(Q_\omega) \longrightarrow [0,\infty]$ is well defined. We  introduce the following functions
\begin{equation}
\left\{\hspace{-0.25cm}\begin{array}{l}
j:L^1(\I;L^2(\omega)) \longrightarrow \mathbb{R} \text{ and } j_T:L^1(\IT;L^2(\omega)) \longrightarrow \mathbb{R} \text{ by}\\
j(u) = \|u\|_{L^1(\I;L^2(\omega))} \text{ and } j_T(u) = \|u\|_{L^1(\IT;L^2(\omega))}.\end{array}\right.
\label{E2.20}
\end{equation}

\begin{theorem}
If there exists a control $\hat u \in \Uad \cap L^1(\I;L^2(\omega))$ such that its associated state $\hat y \in L^2(Q)$, then \Pb has at least one solution $\bar u$.
\label{T2.2}
\end{theorem}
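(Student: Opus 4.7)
The plan is to apply the direct method of the calculus of variations on the unbounded time cylinder: extract weak limits from a minimizing sequence and pass to the limit in the state equation via local-in-time compactness. The hypothesis on $\hat u$ together with Theorem \ref{T2.1} ensures $J(\hat u)<\infty$, so $j^\ast := \inf_{\Uad} J$ is finite. I would pick a minimizing sequence $\{u_k\}\subset\Uad$. Boundedness of $\K$ in $L^2(\omega)$ immediately gives a uniform $L^\infty(\I;L^2(\omega))$-bound on $u_k$; the inequality $J(u_k)\le j^\ast+1$ additionally supplies $\|y_{u_k}-y_d\|_{L^2(Q)}\le C$ and $\|u_k\|_{L^1(\I;L^2(\omega))}\le C$. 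Interpolating the $L^\infty$ and $L^1$ bounds produces a uniform $L^2(Q_\omega)$-bound on $u_k$, and Theorem \ref{T2.1} then furnishes uniform bounds on $f(\cdot,\cdot,y_{u_k})$ in $L^2(Q)$, on $y_{u_k}$ in $L^2(\I;H^1(\Omega))$, and on $\partial_t y_{u_k}$ in $L^2(\I;H^1(\Omega)^*)$.

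Next I would extract, along a subsequence, a weak-$\ast$ limit $\bar u\in L^\infty(\I;L^2(\omega))$ of $u_k$, which also serves as the weak $L^2(Q_\omega)$-limit, together with $y_{u_k}\rightharpoonup \bar y$ in $L^2(\I;H^1(\Omega))$. The pointwise constraint $u_k(t)\in\K$ is preserved under weak convergence because $\K$ is closed and convex, hence $\bar u\in\Uad$. For each finite $T>0$, Aubin--Lions applied on $(0,T)$ yields strong convergence $y_{u_k}\to\bar y$ in $L^2(Q_T)$ and, along a further subsequence, pointwise a.e.\ convergence. Continuity of $f(x,t,\cdot)$ together with the uniform $L^2(Q)$-bound on $f(\cdot,\cdot,y_{u_k})$ then yields $f(\cdot,\cdot,y_{u_k})\rightharpoonup f(\cdot,\cdot,\bar y)$ weakly in $L^2(Q_T)$. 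Passing to the limit in the variational formulation of \eqref{E2.1} on each $Q_T$ identifies $\bar y = y_{\bar u}$.

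It remains to check $J(\bar u)\le \liminf_k J(u_k)$. The tracking term is convex and continuous on $L^2(Q)$ and therefore weakly lower semicontinuous, so $\|\bar y - y_d\|_{L^2(Q)}^2 \le \liminf_k\|y_{u_k}-y_d\|_{L^2(Q)}^2$. For the sparsity term I would truncate at a finite horizon: weak convergence in $L^2(Q_\omega)$ restricted to $(0,T)$ implies weak convergence in $L^1(\IT;L^2(\omega))$ because $L^\infty(\IT;L^2(\omega))\subset L^2(\IT;L^2(\omega))$ when $T<\infty$, so the convex continuous functional $j_T$ defined in \eqref{E2.20} is weakly lower semicontinuous, giving
\[
\int_0^T \|\bar u(t)\|_{L^2(\omega)}\dt \le \liminf_k \int_0^T \|u_k(t)\|_{L^2(\omega)}\dt \le \liminf_k j(u_k).
\]
Monotone convergence as $T\to\infty$ then delivers $j(\bar u)\le \liminf_k j(u_k)$, and adding the two estimates yields $J(\bar u)\le j^\ast$, so $\bar u$ is a minimizer.

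The main obstacle I anticipate is passing to the limit in the nonlinear term $f(\cdot,\cdot,y_{u_k})$ on the unbounded cylinder, where no global strong compactness is available; the uniform $L^2(Q)$-estimate from Theorem \ref{T2.1} combined with the local Aubin--Lions argument is precisely what is needed to recover weak convergence of the nonlinearity on each $Q_T$, which suffices to identify the limit state. A secondary delicate point is the mismatch between the naturally $L^1$-in-time sparsity cost and the weak-$\ast$ $L^\infty$ compactness of $\Uad$, resolved by the finite-horizon truncation above.
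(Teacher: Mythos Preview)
Your proof is correct and follows essentially the same direct-method approach as the paper: a minimizing sequence, uniform bounds from the cost and the constraint set, weak compactness, local-in-time passage to the limit in the state equation, and finite-horizon truncation plus $\sup_{T>0}$ for weak lower semicontinuity of $j$. The only minor differences are cosmetic: the paper relies on the $W(0,T)\cap L^\infty(Q_T)$ boundedness of the states to pass to the limit in \eqref{E2.1} without spelling out Aubin--Lions or the identification of the nonlinear term, and it handles lower semicontinuity by truncating both parts of the cost simultaneously rather than treating the tracking term globally on $L^2(Q)$ as you do.
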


\begin{proof}
Let $\{u_k\}_{k = 1}^\infty \subset \Uad$ be a minimizing sequence for \Pb with associated states $\{y_k\}_{k = 1}^\infty$. Then, for $k$ big enough we have that $J(u_k) \le J(\hat u)$, unless $\hat u$ is already a solution of \Pb. As a consequence of this inequalities and the control constraint we get that $\{u_k\}_{k = 1}^\infty$ and $\{y_k\}_{k = 1}^\infty$ are bounded in $L^\infty(\I;L^2(\Omega)) \cap L^1(\I;L^2(\omega))$ and $L^2(Q)$, respectively. As $L^\infty(\I;L^2(\Omega)) \cap L^1(\I;L^2(\omega)) \subset L^2(Q_\omega)$, we take subsequences, denoted in the same form, such that $u_k \rightharpoonup \bar u$ in $L^2(Q_\omega)$, $u_k \stackrel{*}{\rightharpoonup} \bar u$ in $L^\infty(\I;L^2(\omega))$, and $y_k \rightharpoonup \bar y$ in $L^2(Q)$. Since $\{{y_k}_{\mid Q_T}\}_{k = 1}^\infty$ is bounded in $W(0,T) \cap L^\infty(Q_T)$ for every $T > 0$, we can pass to the limit in equation \eqref{E2.1} satisfied by $(u_k,y_k)$ to deduce that $(\bar u,\bar y)$ satisfies \eqref{E2.1} for all $T$. Hence, $\bar y$ is the state associated to $\bar u$.

Since $u_k \rightharpoonup \bar u$ in $L^2(Q_\omega)$ and $\Uad \cap L^2(Q_\omega)$ is convex and closed in $L^2(Q_\omega)$, we infer that $\bar u \in \Uad$. Moreover,  the restrictions of the functionals $j_T$ to $L^2(Q_T)$ are convex and continuous, and thus $j_T(\bar u) \le \liminf_{k \to \infty}j_T(u_k)$. Therefore, we obtain
\begin{align*}
\frac{1}{2}\|\bar y - y_d\|^2_{L^2(Q_T)} + \kappa j_T(\bar u) &\le \liminf_{k \to \infty}\Big\{\frac{1}{2}\|y_k - y_d\|^2_{L^2(Q_T)} + \kappa j_T(u_k)\Big\}\\
& \le \liminf_{k \to \infty}J(u_k) = \inf\Pb,
\end{align*}
and, consequently,
\[
J(\bar u) = \sup_{T > 0}\Big\{\frac{1}{2}\|\bar y - y_d\|^2_{L^2(Q_T)} + \kappa j_T(\bar u)\Big\} \le \inf\Pb.
\]
Therefore, $\bar u$ is a solution of \Pb.
\end{proof}

We end this section by describing special situations in which the existence of an admissible control as demanded in Theorem \ref{T2.2}  is guaranteed. At first we give a sufficient condition on $f$ such  that  for all sufficiently small $y_0$ the zero-control is admissible.  The embedding constant from $H^1(\Omega)$ into $L^4(\Omega)$ will be denoted by $C_4$.

\begin{proposition}
Assume that $f$ satisfies \eqref{E2.2}--\eqref{E2.4} as well as
\begin{equation*}
\exists\, m_f \in (0,M_f) \text{ such that } \frac{\partial f}{\partial y}(x,t,y) \ge 0 \text{ if } |y| < m_f.
\end{equation*}
Then, for each $y_0$ with $\|y_0\|_{L^2(\Omega)} < K_f = \frac{m_f C_a^4}{(C_{M_f}C_4)^2}$ there exists $\lambda > 0$ such that  the solution $y$ to \eqref{E2.2} with $u=0$ satisfies
$$
\|y\|_{L^2(Q} \le \frac{1}{\sqrt \lambda}\Big( \|y_0\|_{L^2(\Omega)} + \|g\|_{L^2(Q)}\Big).
$$
\end{proposition}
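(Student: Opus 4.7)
The plan is to combine a standard energy estimate with a continuation argument. The new sign hypothesis on $\partial_y f$ in $|y|<m_f$, together with \eqref{E2.3} on $|y|\ge M_f$, localises the only part of the nonlinearity whose sign is not controlled to the bounded intermediate strip $m_f\le|y|\le M_f$; this part is then absorbed into the elliptic term as long as $\|y(t)\|_{L^2(\Omega)}$ remains below a suitable threshold.

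Testing \eqref{E1.1} (with $u=0$) against $y$ and using \eqref{E2.6} would yield
\begin{equation*}
\tfrac{1}{2}\tfrac{d}{dt}\|y(t)\|_{L^2(\Omega)}^2 + C_a^2\|y(t)\|_{H^1(\Omega)}^2 + \int_\Omega f(x,t,y)\,y\dx \le \int_\Omega g(t)\,y(t)\dx.
\end{equation*}
Splitting $\Omega$ into $\{|y|\le m_f\}$, $\{m_f<|y|\le M_f\}$ and $\{|y|>M_f\}$, the new hypothesis together with $f(x,t,0)=0$ gives $fy\ge 0$ on the first set; \eqref{E2.3} gives $\operatorname{sign} f=\operatorname{sign} y$ and hence $fy\ge 0$ on the third; while \eqref{E2.4} and the mean value theorem yield $|f|\le C_{M_f}|y|$, so $fy\ge -C_{M_f}y^2$, on the middle set. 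Chebyshev's inequality $|\{|y|>m_f\}|\le m_f^{-2}\|y\|_{L^2(\Omega)}^2$ combined with H\"older and $H^1\hookrightarrow L^4$ then bounds the remaining contribution as
\begin{equation*}
\int_{\{m_f<|y|\le M_f\}}y^2\dx \le |\{|y|>m_f\}|^{1/2}\|y\|_{L^4(\Omega)}^2 \le \tfrac{C_4^2}{m_f}\|y\|_{L^2(\Omega)}\|y\|_{H^1(\Omega)}^2 .
\end{equation*}

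Plugging these bounds into the energy identity and applying Young's inequality to $\int g\,y\dx$ turns it into
\begin{equation*}
\tfrac{d}{dt}\|y(t)\|_{L^2(\Omega)}^2 + 2\!\left(C_a^2-\tfrac{C_{M_f}C_4^2}{m_f}\|y(t)\|_{L^2(\Omega)}\right)\!\|y(t)\|_{H^1(\Omega)}^2 \le \tfrac{1}{C_a^2}\|g(t)\|_{L^2(\Omega)}^2+C_a^2\|y(t)\|_{L^2(\Omega)}^2 .
\end{equation*}
Since $\|y_0\|_{L^2(\Omega)}<K_f$ and $t\mapsto\|y(t)\|_{L^2(\Omega)}$ is continuous by \eqref{E2.8}, I would introduce $t^*\in(0,\infty]$ as the first time at which $\|y(t)\|_{L^2(\Omega)}$ reaches a threshold proportional to $K_f$; on $[0,t^*)$ the parenthesised coefficient is bounded below by some $\lambda>0$, and absorbing the last term via $\|y\|_{L^2}^2\le\|y\|_{H^1}^2$ produces $\tfrac{d}{dt}\|y\|_{L^2}^2+\lambda\|y\|_{H^1}^2\le \lambda^{-1}\|g\|_{L^2(\Omega)}^2$, whose integration over $[0,t]$ yields the uniform bound $\|y(t)\|_{L^2(\Omega)}^2+\lambda\|y\|_{L^2(0,t;H^1(\Omega))}^2\le \|y_0\|_{L^2(\Omega)}^2+\lambda^{-1}\|g\|_{L^2(Q)}^2$.

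The hard part is closing the bootstrap: one must verify that this a priori bound actually prevents $\|y(t)\|_{L^2(\Omega)}$ from ever attaining the chosen threshold, so that $t^*=\infty$. The precise value $K_f=m_fC_a^4/(C_{M_f}C_4)^2$ in the statement should emerge from this balance --- one factor of $C_a^2$ in $K_f$ comes from \eqref{E2.6} when converting $H^1$-coercivity into $L^2$-coercivity, and the other from using $\|y\|_{L^2}^2\le\|y\|_{H^1}^2$ to absorb the Young-term; the bookkeeping has to be arranged so that the threshold $\rho$ satisfies $\|y_0\|_{L^2(\Omega)}^2+\lambda^{-1}\|g\|_{L^2(Q)}^2<\rho^2$, forcing a contradiction in case $t^*<\infty$. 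Once $t^*=\infty$ is established, the trivial embedding $\|y\|_{L^2(Q)}\le\|y\|_{L^2(\I;H^1(\Omega))}$ delivers the claimed inequality for a $\lambda>0$ depending only on $C_a$, $C_{M_f}$, $C_4$, and $m_f$.
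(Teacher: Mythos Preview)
Your approach is essentially the paper's: test against $y$, split into $\{|y|<m_f\}$, $\{m_f\le|y|\le M_f\}$, $\{|y|>M_f\}$, drop the outer two regions by the sign conditions, bound the middle strip via Chebyshev--H\"older--Sobolev, and run a continuation argument with threshold $K_0:=\tfrac12(\|y_0\|_{L^2(\Omega)}+K_f)$ to obtain $\tfrac12\tfrac{d}{dt}\|y(t)\|_{L^2(\Omega)}^2+\lambda\|y(t)\|_{L^2(\Omega)}^2\le(g(t),y(t))_{L^2(\Omega)}$, after which the paper finishes by multiplying by $e^{\lambda t}$ and integrating twice in time. The bootstrap closure you correctly flag as the delicate point is exactly what the paper defers to \cite[Theorem~2.5]{Casas-Kunisch2017}.
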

\begin{proof} To verify this result we can follow the proof of \cite[Theorem 2.5]{Casas-Kunisch2017}. For this purpose we use that  our conditions on $f$ imply that $y(x,t) f(x,t, y(x,t)) \ge 0$ for almost every $(x,t)\in Q$ satisfying $|y(x,t)| < m_f$ or $|y(x,t)| > M_f$. We set $K_0= \frac{1}{2} (\|y_0\|_{L^2(\Omega)} + K_f)<K_f $. Following the above mentioned proof we obtain
$$
\frac{1}{2} \frac{d}{dt} \|y(t)\|^2_{L^2(\Omega)} + \lambda \|y(t)\|^2_{L^2(\Omega)} \le (g(t),y(t))_{L^2(\Omega)}.
$$
where $\lambda = \frac{1}{2}\big(C_a^2 - \frac{C^2_{M_f}C^2_4 K_0}{m_f C_a^2 } \big)>0$. From here we deduce that
$$
\frac{d}{dt} \|y(t)\|^2_{L^2(\Omega)} + \lambda \|y(t)\|^2_{L^2(\Omega)} \le \frac{1}{\lambda} \|g(t)\|^2_{L^2(\Omega)}.
$$
Multiplying by $\text{exp}(\lambda s)$ and integrating on $(0,t)$ we find
$$
e^{\lambda t}  \|y(t)\|^2_{L^2(\Omega)} \le \|y(0)\|^2_{L^2(\Omega)} + \frac{1}{\lambda} \int_0^t e^{\lambda s} \|g(s)\|^2_{L^2(\Omega)} ds.
$$
Multiplying by $\text{exp}(-\lambda t)$ and integrating on $(0,\infty)$ we obtain the desired estimate
\[
\|y\|_{L^2(Q)} \le \frac{1}{\sqrt \lambda}\Big[\|y_0\|_{L^2(\Omega)} + \Big(\int_0^\infty \|g(t)\|^2_{L^2(\Omega)} dt\Big)^{1/2}\Big].
\]
\end{proof}

For the following result we assume that $f$ is independent of $(x,t)$ and that $f'(0) >0$. Then there exists $\rho^+\in(0,\infty]$ such that $f$ has no change of sign in $(0,\rho^+)$, and analogously there exists $\rho^-\in[-\infty,0)$ with  no sign change  in $(\rho^-,0)$.
\begin{proposition}
Assume that $f$ satisfies \eqref{E2.2}--\eqref{E2.4} as well as $f'(0)>0$, and that $g=0$. Then for each $\rho_-\le y_0\le \rho^+$ the solution $y$ for $u=0$ belongs $L^2(Q)$ and $\|y(t)\|_{L^2(\Omega)} \le e^{-C_a t} \|y_0\|_{L^2(\Omega)}$ for all $t\ge0 $.
\end{proposition}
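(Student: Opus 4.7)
The plan has three steps: (i) confine $y$ to the invariant interval $[\rho^-,\rho^+]$ via a truncation/testing argument; (ii) conclude from the sign hypothesis on $f$ that $y\,f(\cdot,\cdot,y)\ge 0$ pointwise; and (iii) obtain the exponential decay and the $L^2(Q)$-bound from a standard energy estimate.

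For step (i), assume $\rho^+<\infty$ and test \eqref{E2.1} (with $g=u=0$) against $w:=(y-\rho^+)^+$. This test function belongs to $W(0,T)\cap L^\infty(Q_T)$ for every $T>0$ and vanishes at $t=0$ because $y_0\le\rho^+$. On $\{w>0\}$ we have $y\ge\rho^+\ge 0$, so $ay\,w\ge 0$, and $\nabla y\cdot\nabla w=|\nabla w|^2$. The nonlinear contribution is controlled by observing that $f'(0)>0$ together with the no-sign-change assumption forces $f\ge 0$ on $[0,\rho^+]$, whence $f(\rho^+)\ge 0$; combining this with the global one-sided bound \eqref{E2.5} gives
\[
f(y)\;\ge\;f(\rho^+)-C_{M_f}(y-\rho^+)\;\ge\;-C_{M_f}\,w\qquad\text{on }\{w>0\}.
\]
Assembling these estimates yields
\[
\tfrac{1}{2}\tfrac{d}{dt}\|w(t)\|^2_{L^2(\Omega)}\;\le\;C_{M_f}\|w(t)\|^2_{L^2(\Omega)},\qquad w(0)=0,
\]
and Gronwall forces $w\equiv 0$, i.e.\ $y\le\rho^+$ almost everywhere. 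The lower bound $y\ge\rho^-$ is obtained by the symmetric argument applied to $(\rho^- -y)^+$; if $\rho^\pm=\pm\infty$ the corresponding inequality is vacuous.

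For steps (ii)--(iii), once $\rho^-\le y\le\rho^+$ is secured, the sign hypothesis on $f$ combined with $f(0)=0$ yields $y(x,t)\,f(x,t,y(x,t))\ge 0$ a.e.\ in $Q$. Testing \eqref{E2.1} against $y$ and invoking the coercivity \eqref{E2.6} then produces
\[
\tfrac{1}{2}\tfrac{d}{dt}\|y(t)\|^2_{L^2(\Omega)}+C_a^2\,\|y(t)\|^2_{L^2(\Omega)}\;\le\;0,
\]
so Gronwall delivers the claimed exponential decay and integration in $t\in(0,\infty)$ gives $\|y\|^2_{L^2(Q)}\le(2C_a^2)^{-1}\|y_0\|^2_{L^2(\Omega)}$, hence $y\in L^2(Q)$. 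The main obstacle is step (i): the standing hypotheses \eqref{E2.2}--\eqref{E2.4} say nothing direct about the sign of $f$ beyond $\rho^+$, so the $f(y)w$ term cannot simply be discarded; the trick is to exploit the globally valid one-sided derivative bound \eqref{E2.5} to control it from below and then absorb the resulting quadratic term through Gronwall.
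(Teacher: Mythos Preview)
Your argument is correct and complete. The paper itself gives no proof here but only refers to an adaptation of \cite[Theorem~2.6]{Casas-Kunisch2017}; the three steps you carry out---invariance of the interval $[\rho^-,\rho^+]$ via testing with $(y-\rho^+)^+$ and $(\rho^--y)^+$, the resulting sign condition $yf(y)\ge 0$, and the energy estimate combined with \eqref{E2.6}---are precisely the natural adaptation of that reference, so there is no substantive difference to report.

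Two minor remarks. First, your energy inequality in step~(iii) actually yields $\|y(t)\|_{L^2(\Omega)}\le e^{-C_a^2 t}\|y_0\|_{L^2(\Omega)}$ rather than the stated $e^{-C_a t}$; this is a harmless discrepancy in the exponent (likely a typo in the statement, since \eqref{E2.6} delivers $C_a^2$ in the quadratic form). Second, your handling of the $f(y)w$ term in step~(i) via the global lower bound \eqref{E2.5} is the right idea and avoids any unjustified sign assumption on $f$ beyond $\rho^+$; this is the only place where some care is needed, and you addressed it correctly.
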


This can be verified by adapting the proof of \cite[Theorem 2.6]{Casas-Kunisch2017}. A special case of $f$ is given by  the Schl\"ogl model, where $f(x,t,y)= y(y - \xi_1)(y-\xi_2)$ with $\xi_i\in \mathbb{R}$, $i\in \{1,2\}$. It was investigated in  \cite{AKR2021}
with  finite dimensional controls of the form $u(t)= \sum_{i=1}^{M} u_i(t) \chi_{\omega_i^M}$, where $\omega^M_i\subset \Omega$, and $\sum_{i=1}^{M} |\omega_i^M| = r |\Omega|$ for some fixed $r\in(0,1)$ independent of $M$.  From these results it follows as a special case that for $g=0, y_d=0$, $\Omega$ a hypercube, and  for each $\lambda>0$ there exist $M$, $\gamma>0$,  and $\{\omega_i\}_{i=1}^{M}$ such that the controlled system decays exponentially with rate $-\lambda$, and that the controls, which are constructed in feedback form, are admissible. This holds globally for all $y_0 \in H^1(\Omega)$.

\section{Auxiliary Finite Horizon Problems}
\label{S3}
\setcounter{equation}{0}

We denote by $\bar u$ a solution of \Pb with associated state $\bar y$. In order to establish an optimality system satisfied by $(\bar u,\bar y)$ we introduce some auxiliary finite horizon control problems. For every $T > 0$ we consider the problem
\begin{align*}
& \PbT \min_{u \in \UadT} J_T(u) = F_T(u) + \kappa j_T(u),\\
&\text{\rm where } F_T(u) = \frac{1}{2}\|y_{T,u} - y_d\|^2_{L^2(Q_T)} +  \frac{1}{2}\|y_{T,u} - \bar y\|^2_{L^2(Q_T)},\ \ j_T(u) = \|u\|_{L^1(0,T;L^2(\omega))},\\
&\UadT = \{u \in L^\infty(\IT;L^2(\omega)) : u(t) \in \K \ \text{for a.a. } t \in (\IT)\},
\end{align*}
and $y_{T,u}$ is the solution of \eqref{E2.1} corresponding to $u$. In the following theorem $\partial j_T$ denotes the convex subdifferential of the function $j_T$ and $Q_{T,\omega} = \omega \times (0,T)$.

\begin{theorem}
Problem \PbT has at least one solution $u_T$ with associated state $y_T$. Furthermore, there exist $\varphi_T \in W(0,T)$, $\lambda_T \in \partial j_T(u_T) \subset L^\infty(\IT;L^2(\omega))$, and $\mu_T \in L^\infty(\IT;L^2(\omega))$ such that
\begin{align}
&\left\{\begin{array}{l}
\displaystyle
-\frac{\partial\varphi_T}{\partial t}- \Delta\varphi_T + a\varphi_T + \frac{\partial f}{\partial y}(x,t,y_T)\varphi_T =  2y_T - y_d - \bar y \mbox{ in } Q_T,\\[0.5ex]
\partial_n\varphi_T  = 0  \mbox{ on } \Sigma_T,\ \varphi_T(T) = 0 \mbox{ in } \Omega,
\end{array}\right.
\label{E3.1}\\
&\int_0^T\int_\omega\mu_T(u - u_T)\dx\dt \le 0\quad \forall u \in \UadT,\label{E3.2}\\
&{\varphi_T}_{\mid Q_{T,\omega}} + \kappa\lambda_T + \mu_T = 0. \label{E3.3}
\end{align}
\label{T3.1}
\end{theorem}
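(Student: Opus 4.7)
My plan splits into three stages: existence by the direct method, representation of $F_T'(u_T)$ through an adjoint equation, and extraction of the multiplier system via a convex subdifferential sum rule.

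\emph{Existence.} I would take a minimizing sequence $\{u_k\} \subset \UadT$. Since $\K$ is bounded in $L^2(\Omega)$, the sequence is bounded in $L^\infty(\IT;L^2(\omega))$ and hence in $L^2(Q_{T,\omega})$. By the first assertion of Theorem~\ref{T2.1}, the associated states $y_k$ are bounded in $W(0,T) \cap L^\infty(Q_T)$. Extracting subsequences I obtain $u_k \stackrel{*}{\rightharpoonup} u_T$ in $L^\infty(\IT;L^2(\omega))$, $y_k \rightharpoonup y_T$ in $W(0,T)$, and by Aubin--Lions compactness $y_k \to y_T$ strongly in $L^2(Q_T)$ and a.e.\ on $Q_T$. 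The uniform $L^\infty(Q_T)$ bound on $y_k$, together with \eqref{E2.4} and dominated convergence, gives $f(\cdot,\cdot,y_k) \to f(\cdot,\cdot,y_T)$ in $L^2(Q_T)$, which lets me pass to the limit in the variational formulation of \eqref{E2.1} and identify $y_T = y_{T,u_T}$. Convexity and closedness of $\UadT$ place $u_T \in \UadT$. Continuity of $F_T$ under strong state convergence and the weak lower semicontinuity of $j_T$ (convex and Lipschitz on $L^2(Q_{T,\omega})$, e.g.\ from $j_T(u) \le \sqrt T\,\|u\|_{L^2(Q_{T,\omega})}$) then show that $u_T$ is a minimizer.

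\emph{Adjoint representation of $F_T'(u_T)$.} Because $y_T \in L^\infty(Q_T)$ and $\tfrac{\partial f}{\partial y}(\cdot,\cdot,y_T) \in L^\infty(Q_T)$ by \eqref{E2.4}, the linearized equation
\[
\partial_t z - \Delta z + a z + \tfrac{\partial f}{\partial y}(x,t,y_T)\,z = h\chi_\omega,\quad z(0)=0,\quad \partial_n z = 0,
\]
has a unique solution $z \in W(0,T)$ for every $h \in L^2(Q_{T,\omega})$, and the control-to-state map $u \mapsto y_{T,u}$ is Fr\'echet differentiable at $u_T$ with derivative $h \mapsto z$. Defining $\varphi_T \in W(0,T)$ as the solution of the backward adjoint system \eqref{E3.1} (well-posed because its right-hand side is in $L^2(Q_T)$ and its zero-order coefficient is bounded), the usual integration by parts yields
\[
F_T'(u_T)\,h \;=\; \int_{Q_T}(2y_T - y_d - \bar y)\, z \dx\dt \;=\; \int_{Q_{T,\omega}} \varphi_T\, h \dx\dt,
\]
so $F_T'(u_T)$ is represented by ${\varphi_T}_{\mid Q_{T,\omega}}$. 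The embedding $W(0,T) \hookrightarrow C([0,T];L^2(\Omega))$ places this representative in $L^\infty(\IT;L^2(\omega))$.

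\emph{Multipliers.} Since $u_T$ minimizes $F_T + \kappa j_T$ over $\UadT$, with $F_T$ Fr\'echet differentiable and $j_T$ convex and continuous on $L^2(Q_{T,\omega})$, the standard convex sum rule yields
\[
0 \in {\varphi_T}_{\mid Q_{T,\omega}} + \kappa\, \partial j_T(u_T) + N_{\UadT}(u_T)
\]
in $L^2(Q_{T,\omega})$. I would pick any $\lambda_T \in \partial j_T(u_T)$ and set $\mu_T := -{\varphi_T}_{\mid Q_{T,\omega}} - \kappa\lambda_T$; then \eqref{E3.3} holds by definition, and $-\mu_T \in N_{\UadT}(u_T)$ is precisely \eqref{E3.2}. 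The pointwise structure of the subdifferential of $\|\cdot\|_{L^2(\omega)}$ gives $\|\lambda_T(t)\|_{L^2(\omega)} \le 1$ for a.a.~$t$, so $\lambda_T \in L^\infty(\IT;L^2(\omega))$, and consequently $\mu_T \in L^\infty(\IT;L^2(\omega))$ as well.

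The principal technical obstacle is the pointwise-in-$t$ description of $\partial j_T$: because $j_T$ is an integral over $(0,T)$ of a nonsmooth convex integrand, the subdifferential elements have to be produced via a measurable selection / integral representation argument; this is classical but requires some care. Everything else is a routine combination of convex analysis and parabolic regularity.
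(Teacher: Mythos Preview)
Your approach is essentially the one in the paper: existence by the direct method, identification of $F_T'(u_T)$ with the restriction of the adjoint state, and then a subdifferential sum rule producing $\lambda_T\in\partial j_T(u_T)$ and $\mu_T\in\partial I_{\UadT}(u_T)=N_{\UadT}(u_T)$. The paper derives the variational inequality $\int_{Q_{T,\omega}}\varphi_T(u-u_T)\dx\dt+\kappa[j_T(u)-j_T(u_T)]\ge0$ for $u\in\UadT$ first and then applies the sum rule in $L^1(\IT;L^2(\omega))$, whereas you invoke the sum rule directly in $L^2(Q_{T,\omega})$; both routes are fine.

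One genuine slip: from $0\in{\varphi_T}_{\mid Q_{T,\omega}}+\kappa\,\partial j_T(u_T)+N_{\UadT}(u_T)$ you cannot ``pick any $\lambda_T\in\partial j_T(u_T)$'' and conclude that $\mu_T:=-{\varphi_T}_{\mid Q_{T,\omega}}-\kappa\lambda_T$ lies in $N_{\UadT}(u_T)$. The sum rule only guarantees the \emph{existence} of some $\lambda_T\in\partial j_T(u_T)$ and some $\mu_T\in N_{\UadT}(u_T)$ with ${\varphi_T}_{\mid Q_{T,\omega}}+\kappa\lambda_T+\mu_T=0$; an arbitrary subgradient will in general not work. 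Fix the quantifier and the argument goes through.
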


\begin{proof}
The proof for existence of a solution $u_T$ is the same as in Theorem \ref{T2.2}. Let us derive the optimality conditions. It is well known that $F_T:L^\infty(\IT;L^2(\omega)) \longrightarrow \mathbb{R}$ is of class $C^1$ and its derivative at $u_T$ is given by
\[
F'_T(u_T)v = \int_{Q_{T,\omega}}\varphi_{T,u}v\dx\dt,
\]
where $\varphi_{T,u} \in W(0,T)$ is the solution of \eqref{E3.1}. Since $\varphi_T \in W(0,T) \subset C([\IT];L^2(\Omega))$ the linear form $F'_T(u_T):L^\infty(\IT;L^2(\omega)) \longrightarrow \mathbb{R}$ can be  extended to a continuous form $F'_T(u_T):L^1(\IT;L^2(\omega)) \longrightarrow \mathbb{R}$. Now, using the optimality of $u_T$ and the convexity of $j_T$ we derive for every $u \in \UadT$ and every $\rho \in (0,1)$ small enough
\begin{align*}
&\frac{F_T(u_T + \rho(u - u_T)) - F_T(u_T)}{\rho} + \kappa[j_T(u) - j_T(u_T)]\\
&\ge \frac{J_T(u_T + \rho(u - u_T)) - J_T(u_T)}{\rho} \ge 0.
\end{align*}
Then, passing to the limit when $\rho \to 0$ we infer
\[
\int_{Q_{T,\omega}}\varphi_T(u - u_T)\dx\dt + \kappa[j_T(u) - j_T(u_T)] \ge 0 \quad \forall u \in \UadT.
\]
If we denote by $I_{\UadT}:L^1(\IT;L^2(\omega)) \longrightarrow [0,\infty]$ the indicator function taking the value 0 if $u \in \UadT$ and $\infty$ otherwise, then the above inequality implies that $u_T$ is a solution for the optimization problem
\[
\min_{u \in L^1(\IT;L^2(\omega))} \int_{Q_{T,\omega}}\varphi_T u\dx\dt + \kappa j_T(u) + I_{\UadT}(u).
\]
From the subdifferential calculus we deduce that
\[
0 \in {\varphi_T}_{\mid Q_{T,\omega}} + \kappa\partial j_T(u_T) + \partial I_{\UadT}(u_T).
\]
Hence, there exist $\lambda_T \in \partial j_T(u_T)$ and $\mu_T \in \partial I_\UadT(u_T)$ such that \eqref{E3.3} holds. Further, $\mu_T \in \partial I_\UadT(u_T)$ is equivalent to \eqref{E3.2}, which completes the proof.
\end{proof}

Let us observe that $\lambda_T \in \partial j_T(u_T)$ implies that
\begin{equation}
\left\{\begin{array}{l}\|\lambda_T(t)\|_{L^2(\omega)} \le 1 \text{ for a.a. } t \in (\IT), \\[1.5ex]\displaystyle\lambda_T(x,t) = \frac{u_T(x,t)}{\|u_T(t)\|_{L^2(\omega)}} \text{ for a.a. } (x,t) \in Q_{T,\omega} \text{ if } \|u_T(t)\|_{L^2(\omega)} \neq 0;\end{array}\right.
\label{E3.5}
\end{equation}
see, for instance, \cite[Proposition 3.8]{CHW2017}.

Regarding $\mu_T$ we consider separately the two cases specified in \eqref{E1.2} and \eqref{E1.3}.

\begin{lemma}
If $\K$ is chosen as in \eqref{E1.2}, then $\mu_T$ has the following properties
\begin{align}
&\int_\omega\mu_T(t)(v - u_T(t))\dx \le 0 \ \ \forall v \in B_\gamma, \label{E3.6}
\\&\text{If } \|u_T(t)\|_{L^2(\omega)} < \gamma, \text{ then } \|\mu_T(t)\|_{L^2(\omega)} = 0, \label{E3.7}\\
&\text{If }  \|\mu_T(t)\|_{L^2(\omega)} \neq 0, \text{ then } u_T(x,t) = \gamma\frac{\mu_T(x,t)}{\|\mu_T(t)\|_{L^2(\omega)}},\label{E3.8}
\end{align}
for almost all $t \in (\IT)$.
\label{L3.1}
\end{lemma}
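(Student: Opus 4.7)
The plan is to derive the three pointwise conditions directly from the variational inequality \eqref{E3.2} by judicious choice of the competitor $u \in \UadT$, using only the structure of $\K = B_\gamma$ and nothing about the optimality system beyond \eqref{E3.2}.

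For \eqref{E3.6}, I would fix an arbitrary $v \in B_\gamma$ and a measurable subset $E \subset (\IT)$ and construct the competitor $u(x,t) = v(x)$ for $t \in E$, $u(x,t) = u_T(x,t)$ otherwise. Since $v \in \K$, this $u$ lies in $\UadT$, so inserting it into \eqref{E3.2} yields $\int_E \int_\omega \mu_T (v - u_T)\dx\dt \le 0$. Arbitrariness of $E$ gives $\int_\omega \mu_T(t)(v - u_T(t))\dx \le 0$ for almost every $t$, with an exceptional null set $N_v$ depending on $v$. To upgrade this to a single null set outside of which the inequality holds for \emph{all} $v \in B_\gamma$, I would invoke separability of $L^2(\omega)$: pick a countable dense $\{v_k\}_{k \ge 1} \subset B_\gamma$, remove $N = \bigcup_k N_{v_k}$ (still null), and extend by continuity of the linear map $v \mapsto \int_\omega \mu_T(t)(v - u_T(t))\dx$ on $L^2(\omega)$ to the whole ball $B_\gamma$.

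For \eqref{E3.7}, suppose $\|u_T(t)\|_{L^2(\omega)} < \gamma$ at some $t$ outside the null set above. For any $w \in L^2(\omega)$ with $\|w\|_{L^2(\omega)} \le 1$, both $v_\pm = u_T(t) \pm \varepsilon w$ lie in $B_\gamma$ for $\varepsilon > 0$ sufficiently small, so \eqref{E3.6} applied with $v_+$ and $v_-$ yields $\pm \varepsilon \int_\omega \mu_T(t) w \dx \le 0$. Hence $\int_\omega \mu_T(t) w \dx = 0$ for every such $w$, forcing $\mu_T(t) = 0$ in $L^2(\omega)$. For \eqref{E3.8}, assume $\|\mu_T(t)\|_{L^2(\omega)} \neq 0$. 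The contrapositive of \eqref{E3.7} gives $\|u_T(t)\|_{L^2(\omega)} = \gamma$, and \eqref{E3.6} characterizes $u_T(t)$ as a maximizer of the linear functional $v \mapsto \int_\omega \mu_T(t) v \dx$ on $B_\gamma$. By the equality case in Cauchy-Schwarz, this maximizer is unique and equals $\gamma \mu_T(t)/\|\mu_T(t)\|_{L^2(\omega)}$, yielding the pointwise formula \eqref{E3.8}.

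The only genuine subtlety in the argument is the passage from the $v$-dependent null set in \eqref{E3.6} to a single null set valid for all $v \in B_\gamma$; once that separability step is in place the remaining deductions are classical finite-dimensional style convex analysis. Measurability of $t \mapsto \|u_T(t)\|_{L^2(\omega)}$ and $t \mapsto \|\mu_T(t)\|_{L^2(\omega)}$ (both are norms of strongly measurable maps in $L^\infty(\IT;L^2(\omega))$) ensures that all the conditions "$\|u_T(t)\|_{L^2(\omega)} < \gamma$" and "$\|\mu_T(t)\|_{L^2(\omega)} \neq 0$" describe measurable sets of $t$, so the "for a.a. $t$" conclusions are meaningful.
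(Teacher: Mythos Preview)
Your proposal is correct. The paper's proof differs in presentation: for \eqref{E3.6} it simply cites \cite[Corollary 3.1]{Casas-Kunisch2022}, whereas you give a self-contained derivation via localization in $E \subset (\IT)$ and a separability/density argument to obtain a single exceptional null set. For \eqref{E3.7} and \eqref{E3.8} the paper handles both at once through the single chain
\[
\|\mu_T(t)\|_{L^2(\omega)} = \frac{1}{\gamma}\sup_{v \in B_\gamma}\int_\omega\mu_T(t)v\dx \le \frac{1}{\gamma}\int_\omega\mu_T(t)u_T(t)\dx \le \frac{1}{\gamma}\|\mu_T(t)\|_{L^2(\omega)}\|u_T(t)\|_{L^2(\omega)},
\]
from which \eqref{E3.7} is read off when $\|u_T(t)\|_{L^2(\omega)} < \gamma$, and the equality case of Cauchy--Schwarz gives \eqref{E3.8}. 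Your route to \eqref{E3.7} via the interior-point perturbation $u_T(t)\pm\varepsilon w$ is equally valid and perhaps more transparently tied to the normal-cone interpretation, while the paper's dual-norm chain is slightly more economical. For \eqref{E3.8} the two arguments coincide.
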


\begin{proof}
For the proof of \eqref{E3.6} the reader is referred to the first statement of \cite[Corollary 3.1]{Casas-Kunisch2022}. To prove \eqref{E3.7} and \eqref{E3.8} we proceed as follows
\begin{align*}
&\|\mu_T(t)\|_{L^2(\omega)} = \frac{1}{\gamma}\sup_{v \in B_\gamma}\int_\omega\mu_T(t)v\dx\\
&\le \frac{1}{\gamma}\int_\omega\mu_T(t)u_T(t)\dx \le \frac{1}{\gamma}\|\mu_T(t)\|_{L^2(\omega)}\|u_T(t)\|_{L^2(\omega)}.
\end{align*}
If $\|u_T(t)\|_{L^2(\omega)} < \gamma$, the above inequality is only possible if $\|\mu_T(t)\|_{L^2(\omega)} = 0$, which proves \eqref{E3.7}. Otherwise  we get the equality
\[
\int_\omega\mu_T(t)u_T(t)\dx = \|\mu_T(t)\|_{L^2(\omega)}\|u_T(t)\|_{L^2(\omega)}.
\]
This only possible if there exists a number $g(t) \ge 0$ such that $u_T(x,t) = g(t)\mu_T(x,t)$ for almost all $x \in \omega$. But, we have
\[
\gamma = \|u_T(t)\|_{L^2(\omega)}=g(t)\|\mu_T(t)\|_{L^2(\omega)},
\]
therefore $g(t) = \frac{\gamma}{\|\mu_T(t)\|_{L^2(\omega)}}$, which implies \eqref{E3.8}.
\end{proof}

\begin{lemma}
If $\K$ is chosen as in \eqref{E1.3}, then $\mu_T$ satisfies
\begin{equation}
\mu_T(x,t) \left\{\begin{array}{ll}\le 0 &\text{if } u_T(x,t) = \alpha,\\
= 0 &\text{if } \alpha < u_T(x,t) < \beta,\\ \ge 0 &\text{if } u_T(x,t) = \beta,\end{array}\right.
\label{E3.9}
\end{equation}
for almost all $(x,t) \in \omega \times (0,T)$.
\label{L3.2}
\end{lemma}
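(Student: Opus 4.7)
My plan is to derive all three pointwise conditions in \eqref{E3.9} from the variational inequality \eqref{E3.2} by contradiction, using localized admissible perturbations of $u_T$. The basic idea is that \eqref{E3.2} says $u_T$ minimizes the linear functional $u\mapsto \int_{Q_{T,\omega}}\mu_T u\dx\dt$ over the box $\UadT$, so $\mu_T$ must have the sign consistent with the active side of the box at each point.

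Concretely, for the first case, suppose the set $E_\alpha = \{(x,t)\in Q_{T,\omega} : u_T(x,t)=\alpha \text{ and } \mu_T(x,t)>0\}$ has positive measure. Since $\beta>0>\alpha$, the function $u = u_T + \varepsilon \chi_{E_\alpha}$ lies in $\UadT$ for any $0<\varepsilon\le \beta-\alpha$, and plugging it into \eqref{E3.2} yields $\varepsilon\int_{E_\alpha}\mu_T\dx\dt \le 0$, contradicting the strict positivity of $\mu_T$ on $E_\alpha$. The case $u_T=\beta$ is symmetric, using $u = u_T - \varepsilon \chi_{E_\beta}$ with $E_\beta = \{u_T=\beta,\ \mu_T<0\}$.

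For the interior condition, suppose $E = \{(x,t): \alpha<u_T(x,t)<\beta, \ \mu_T(x,t)\ne 0\}$ has positive measure. Split $E = E^+\cup E^-$ according to the sign of $\mu_T$; one of them has positive measure, say $E^+$. Exhausting by the sublevel/superlevel sets
\[
E^+_n = \Bigl\{(x,t)\in E^+ : \alpha+\tfrac{1}{n}\le u_T(x,t)\le \beta-\tfrac{1}{n},\ \mu_T(x,t)\ge \tfrac{1}{n}\Bigr\},
\]
some $E^+_n$ has positive measure. Then $u = u_T + \varepsilon \chi_{E^+_n}$ is admissible for $\varepsilon \le 1/n$, and \eqref{E3.2} gives $\varepsilon\int_{E^+_n}\mu_T\dx\dt \le 0$, which contradicts $\mu_T\ge 1/n$ on $E^+_n$. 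The analogous perturbation $u=u_T-\varepsilon\chi_{E^-_n}$ handles $E^-$.

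The argument is essentially routine; the only point requiring slight care is the interior case, where a direct perturbation on $\{\alpha<u_T<\beta\}$ may not stay inside $\UadT$ because $u_T$ can approach the bounds without attaining them, which is why the exhaustion step with the gap $1/n$ is needed. Everything else is immediate from \eqref{E3.2}.
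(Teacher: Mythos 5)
Your proof is correct and is exactly the standard argument the paper has in mind: the paper itself gives no details, merely remarking that \eqref{E3.9} is a well-known consequence of $\mu_T\in\partial I_{\UadT}(u_T)$, i.e.\ of the variational inequality \eqref{E3.2}. Your localized perturbations, including the exhaustion by the sets with gap $1/n$ to handle the non-attained interior case, correctly fill in those details.
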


This a well known property following from the fact that $\mu_ \in \partial I_\UadT(u_T)$.

\section{Convergence of the Auxiliary Problems}
\label{S4}
\setcounter{equation}{0}

In this section,  $u_T$ denotes a solution of \PbT for $T > 0$. Associated with $u_T$ we have the corresponding state $y_T$ and elements $\varphi_T \in W(0,T)$, $\lambda_T \in \partial j_T(u_T)$, and $\mu_T \in L^\infty(\IT;L^2(\omega))$ satisfying the optimality conditions \eqref{E3.1}--\eqref{E3.3}. We extend all these elements by 0 for $t > T$ and denote these extensions by $(\bar u_T,\bar y_T,\bar\varphi_T,\bar\lambda_T,\bar\mu_T)$.
The analysis of the convergence of $(\bar u_T,\bar y_T)$, $\bar\varphi_T$, and $(\bar\lambda_T,\bar\mu_T)$ is split into three subsections.

\subsection{Convergence of the controls and associated states}
\label{S4.1}

We have a first convergence result.

\begin{theorem}
The following convergence properties hold:
\begin{align}
&\bar y_T \to \bar y \text{ in } L^2(Q),\label{E4.1}\\
&\bar u_T \stackrel{*}{\rightharpoonup} \bar u \text{ in } L^\infty(\I;L^2(\omega)) \cap L^2(Q_\omega),\label{E4.2}\\
&\|\bar u_T\|_{L^1(\I;L^2(\omega))} \to \|\bar u\|_{L^1(\I;L^2(\omega))}. \label{E4.3}
\end{align}
\label{T4.1}
\end{theorem}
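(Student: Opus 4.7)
The plan is to compare $u_T$ with the natural competitor $\bar u|_{[0,T]}\in\UadT$, extract weak-$*$ limit points, pass to the limit in the state equation on bounded sub-intervals, and exploit the artificial coupling term $\frac12\|y_{T,u}-\bar y\|^2_{L^2(Q_T)}$ that was built into $F_T$ to force the limit control to coincide with the prescribed $\bar u$.

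\textbf{Upper bound and uniform estimates.} Since the trajectory of \eqref{E2.1} on $[0,T]$ driven by $\bar u|_{[0,T]}$ is precisely $\bar y|_{[0,T]}$, one has
\[
J_T(u_T)\le J_T(\bar u|_{[0,T]}) = \tfrac12\|\bar y-y_d\|^2_{L^2(Q_T)} + \kappa\, j_T(\bar u) \;\longrightarrow\; J(\bar u) \text{ as } T\to\infty.
\]
Combined with the fact that $u_T(t)\in\K$ a.e., this yields uniform bounds of $\bar u_T$ in $L^\infty(0,\infty;L^2(\omega))\cap L^1(0,\infty;L^2(\omega))$ and of $y_T$ in $L^2(Q_T)$, independent of $T$. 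Up to subsequences I may then assume $\bar u_T\stackrel{*}{\rightharpoonup}\tilde u$ in $L^\infty(0,\infty;L^2(\omega))$ and weakly in $L^2(Q_\omega)$, and $\bar y_T\rightharpoonup \tilde y$ in $L^2(Q)$.

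\textbf{Passing to the limit in the nonlinear PDE.} For every $T_0>0$, the $L^\infty$-bound on $u_T$ and Theorem \ref{T2.1} provide uniform bounds on $y_T|_{Q_{T_0}}$ in $W(0,T_0)\cap L^\infty(Q_{T_0})$. By Aubin--Lions, $y_T\to\tilde y$ strongly in $L^2(Q_{T_0})$ and a.e. after a further extraction; assumption \eqref{E2.4} together with dominated convergence then gives $f(\cdot,\cdot,y_T)\to f(\cdot,\cdot,\tilde y)$ in $L^2(Q_{T_0})$. The weak formulation passes to the limit on each $(0,T_0)$, and since $T_0$ is arbitrary, $\tilde y$ is the state associated with $\tilde u\in\Uad$. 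Now decompose $J_T(u_T)=A_T+B_T+C_T$, with $A_T=\frac12\|y_T-y_d\|^2_{L^2(Q_T)}$, $B_T=\frac12\|y_T-\bar y\|^2_{L^2(Q_T)}$, $C_T=\kappa\, j_T(u_T)$, all non-negative. Each summand dominates its restriction to $Q_{T_0}$, which is weakly lower semicontinuous on the relevant Hilbert space (for $C_T$ this uses that $j_{T_0}$ is convex and continuous on $L^2(0,T_0;L^2(\omega))$). Taking $\liminf_{T\to\infty}$ and then $T_0\to\infty$ yields
\[
\liminf_{T\to\infty} J_T(u_T) \;\ge\; J(\tilde u) + \tfrac12\|\tilde y-\bar y\|^2_{L^2(Q)}.
\]

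\textbf{Identification of the limit and norm convergences.} Combining this with the upper bound above and the optimality of $\bar u$ gives
$J(\tilde u)+\tfrac12\|\tilde y-\bar y\|^2_{L^2(Q)} \le J(\bar u) \le J(\tilde u)$,
which forces $\tilde y=\bar y$ in $L^2(Q)$ and $J(\tilde u)=J(\bar u)$. Since the state equation \eqref{E1.1} recovers $u\chi_\omega$ pointwise from $y$, we conclude $\tilde u=\bar u$, so the whole net converges and \eqref{E4.2} holds. Moreover the previous chain of inequalities is an equality, so $J_T(u_T)\to J(\bar u)$; since non-negative $\liminf$s summing to the limit of their sum must each be attained, $A_T\to\frac12\|\bar y-y_d\|^2_{L^2(Q)}$, $B_T\to 0$ and $C_T\to\kappa\|\bar u\|_{L^1(0,\infty;L^2(\omega))}$. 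The vanishing of $B_T$ together with $\|\bar y\|_{L^2(T,\infty;L^2(\Omega))}\to 0$ (since $\bar y\in L^2(Q)$) upgrades the weak convergence of $\bar y_T$ to strong convergence in $L^2(Q)$, proving \eqref{E4.1}, and the convergence of $C_T$ is exactly \eqref{E4.3}.

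\textbf{Main obstacle.} The crux is the identification $\tilde u=\bar u$. Uniqueness of minimisers of \Pb is not available, and without the artificial term $\frac12\|y-\bar y\|^2_{L^2(Q_T)}$ inside $F_T$ the argument would only produce convergence to \emph{some} minimiser. This coupling, harmless as $T\to\infty$ by construction, is what guarantees that the whole net of finite-horizon solutions selects the chosen $\bar u$.
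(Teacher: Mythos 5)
Your proof is correct and follows essentially the same route as the paper: compare $u_T$ with $\bar u|_{[0,T]}$, extract weak-$*$ limits, pass to the limit in the state equation on bounded time intervals, use the coupling term $\tfrac12\|y-\bar y\|^2_{L^2(Q_T)}$ to force $\tilde y=\bar y$ and hence $\tilde u=\bar u$, and then separate the limits of the non-negative summands (the paper's Lemma \ref{L4.1}). The only cosmetic difference is that you obtain the strong convergence \eqref{E4.1} directly from $B_T\to0$ plus the vanishing tail of $\bar y$, whereas the paper deduces it from norm convergence of $\|\bar y_T-y_d\|_{L^2(Q)}$ combined with weak convergence; both are valid.
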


\begin{proof}
Using the optimality of $\bar u_T$ we get
\begin{align}
&\frac{1}{2}\|\bar y_T - y_d\|^2_{L^2(Q)} + \frac{1}{2}\|\bar y_T - \bar y\|^2_{L^2(Q)} + \kappa j(\bar u_T)\notag\\
& = J_T(\bar u_T) + \frac{1}{2}\|y_d\|^2_{L^2(T,\infty;L^2(\Omega))} + \frac{1}{2}\|\bar y\|^2_{L^2(T,\infty;L^2(\Omega))}\notag\\
&\le J_T(\bar u) + \frac{1}{2}\|y_d\|^2_{L^2(T,\infty;L^2(\Omega))} + \frac{1}{2}\|\bar y\|^2_{L^2(T,\infty;L^2(\Omega))}\notag\\
& \le J(\bar u) + \frac{1}{2}\|y_d\|^2_{L^2(Q)} + \frac{1}{2}\|\bar y\|^2_{L^2(Q)}. \label{E4.4}
\end{align}
This implies the boundedness of $\{\bar y_T\}_{T > 0}$ and $\{\bar u_T\}_{T > 0}$ in $L^2(Q)$ and $L^1(\I;L^2(\omega))$, respectively. Additionally, since $\bar u_T \in \Uad$ for every $T > 0$, we have that $\{\bar u_T\}_{T > 0}$ is also bounded in $L^\infty(\I;L^2(\omega))$. As a consequence, $\{\bar u_T\}_{T > 0}$ is also bounded in $L^2(Q_\omega)$. Therefore, there exist sequences $\{T_k\}_{k = 1}^\infty$ converging to $\infty$ such that
\begin{equation}
\bar y_{T_k} \rightharpoonup \tilde y \text{ in } L^2(Q) \text{ and } \bar u_{T_k} \stackrel{*}{\rightharpoonup} \tilde u \text{ in } L^\infty(\I;L^2(\omega)) \cap L^2(Q_\omega) \text{ as  } k \to \infty\label{E4.5}
\end{equation}
with $(\tilde u,\tilde y) \in \Uad \times L^2(Q)$. Let us fix $T > 0$. Then, for every $k$ large enough we have that $(\bar u_{T_K},\bar y_{T_k})$ satisfies the equation \eqref{E2.1} in $Q_T$. Using the boundedness of $\{\bar u_{T_k}\}_{T_k \ge T}$ in $L^\infty(\IT,L^2(\omega))$ we infer that $\{\bar y_{T_k}\}_{T_k \ge T}$ is bounded in $W(0,T) \cap L^\infty(Q_T)$. Then, it is easy to pass to the limit in \eqref{E2.1} and deduce that $(\tilde u,\tilde y) \in L^2(Q_T) \times W(0,T) \cap L^\infty(Q_T)$ satisfies the equation in $Q_T$. Consequently, $\tilde y$ is the solution of the state equation \eqref{E1.1} associated with $\tilde u$. To prove that $\tilde u = \bar u$ we first observe that $\bar u_{T_k} \rightharpoonup \tilde u$ in $L^2(Q_T)$ for every $T > 0$. This implies that
\[
j_T(\tilde u) \le \liminf_{k \to \infty}j_T(\bar u_{T_k}) \le \liminf_{k \to \infty}j(\bar u_{T_k}).
\]
Therefore, we have $j(\tilde u) = \sup_{T > 0}j_T(\tilde u) \le \liminf_{k \to \infty}j(\bar u_{T_k})$. Using this property and the fact that $\bar y_{T_k} \rightharpoonup \tilde y$ in $L^2(Q)$ we infer
\begin{align*}
&J(\tilde u) + \frac{1}{2}\|\tilde y - \bar y\|^2_{L^2(Q)} \le \liminf_{k \to \infty}\left\{\frac{1}{2}\|\bar y_{T_k} - y_d\|^2_{L^2(Q)} + \frac{1}{2}\|\bar y_{T_k} - \bar y\|^2_{L^2(Q)} + \kappa j(\bar u_{T_k})\right\}\\
&=\liminf_{k \to \infty}J_{T_k}(u_{T_k}) + \frac{1}{2}\lim_{k \to \infty}\left\{\|y_d\|^2_{L^2(T_k,\infty;L^2(\Omega))} + \|\bar y\|^2_{L^2(T_k,\infty;L^2(\Omega))}\right\}\\
&\le \liminf_{k \to \infty}J_{T_k}(\bar u) = J(\bar u) \le J(\tilde u).
\end{align*}
We have used the optimality of $u_{T_k}$ and $\bar u$ in (P$_{T_k}$) and (P), respectively,  and the fact that $\tilde u$ is a feasible control for \Pb. From the above inequalities we obtain that $\tilde y = \bar y$ and, hence, the state equation yields $\tilde u = \bar u$.  The uniqueness of the limit implies that \eqref{E4.2} and the weak convergence $y_T \rightharpoonup \bar y$ in $L^2(Q)$ hold.  Arguing similarly as above we obtain
\begin{align*}
&J(\bar u) \le \liminf_{T \to \infty}\left\{\frac{1}{2}\|\bar y_T - y_d\|^2_{L^2(Q)} + \kappa j(\bar u_T)\right\}\le \limsup_{T \to \infty}\left\{\frac{1}{2}\|\bar y_T - y_d\|^2_{L^2(Q)} + \kappa j(\bar u_T)\right\}\\
& \le \limsup_{T \to \infty}J_T(\bar u_T) + \frac{1}{2}\lim_{T \to \infty}\|y_d\|^2_{L^2(T,\infty;L^2(\Omega))} \le \limsup_{T \to \infty}J_T(\bar u) = J(\bar u).
\end{align*}
The previous inequalities yield
\[
\lim_{T \to \infty}\left\{\frac{1}{2}\|\bar y_T - y_d\|^2_{L^2(Q)} + \kappa \|\bar u_T\|_{L^1(\I;L^2(\omega))}\right\} = \frac{1}{2}\|\bar y - y_d\|^2_{L^2(Q)} + \kappa \|\bar u\|_{L^1(\I;L^2(\omega))}.
\]
From the Lemma \ref{L4.1} below we infer that
\[
\lim_{T \to \infty}\|\bar y_T - y_d\|^2_{L^2(Q)} = \|\bar y - y_d\|^2_{L^2(Q)} \text{ and } \lim_{T \to \infty}\|\bar u_T\|_{L^1(\I;L^2(\omega))} = \|\bar u\|_{L^1(\I;L^2(\omega))}.
\]
Finally, \eqref{E4.1} and \eqref{E4.3} are straightforward consequences of these limits.
\end{proof}

\begin{lemma}
Let $\{a_T\}_{T > 0}$ and $\{b_T\}_{T > 0}$ two families of real numbers satisfying
\[
a \le \liminf_{T \to \infty}a_T, \ b \le \liminf_{T \to \infty}b_T, \text{ and } \lim_{T \to \infty}(a_T + b_T) = a + b
\]
for $a, b \in \mathbb{R}$. Then, we have that $\lim_{T \to \infty}a_T = a$ and $\lim_{T \to \infty} b_T = b $.
\label{L4.1}
\end{lemma}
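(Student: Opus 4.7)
The plan is to show that the hypothesis $a\le \liminf a_T$ combined with the convergence of the sum forces a matching upper bound $\limsup a_T \le a$, and then symmetry gives the result for $b_T$. The key algebraic identity is $a_T = (a_T + b_T) - b_T$, which lets one convert information about $b_T$ into information about $a_T$ (and vice versa) via the basic $\limsup/\liminf$ calculus, namely $\limsup(x_T + y_T) \le \limsup x_T + \limsup y_T$ when one of the sequences converges, and $\limsup(-y_T) = -\liminf y_T$.

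Concretely, first I would write
\[
\limsup_{T\to\infty} a_T \;=\; \limsup_{T\to\infty}\bigl((a_T+b_T) - b_T\bigr) \;=\; \lim_{T\to\infty}(a_T+b_T) \;+\; \limsup_{T\to\infty}(-b_T),
\]
where the second equality uses that $a_T+b_T$ has a genuine limit $a+b$, so it can be split off without loss. The second term equals $-\liminf_{T\to\infty} b_T \le -b$ by hypothesis, yielding $\limsup a_T \le (a+b) - b = a$. Combined with $a \le \liminf a_T \le \limsup a_T$, this forces $\liminf a_T = \limsup a_T = a$, i.e.\ $\lim_{T\to\infty} a_T = a$. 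Swapping the roles of $a_T$ and $b_T$ gives $\lim_{T\to\infty} b_T = b$.

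I do not anticipate any genuine obstacle; the only point that deserves a line of justification is the splitting of the $\limsup$ into a convergent part plus another $\limsup$, which is valid precisely because $\{a_T+b_T\}$ converges. One could equivalently argue by contradiction: if $\limsup a_T > a$, extract a subsequence $T_k$ along which $a_{T_k} \to a' > a$; then $b_{T_k} = (a_{T_k}+b_{T_k}) - a_{T_k} \to (a+b) - a' < b$, contradicting $b \le \liminf b_T$. This second route avoids any $\limsup$ arithmetic and may be preferable for clarity, but both lead to the same two-line proof.
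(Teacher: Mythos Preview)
Your proposal is correct and follows essentially the same route as the paper: the paper's proof is the single chain
\[
a \le \liminf_{T \to \infty} a_T \le \limsup_{T \to \infty} a_T \le \limsup_{T \to \infty} (a_T + b_T) - \liminf_{T \to \infty} b_T \le (a + b) - b = a,
\]
which is exactly your $\limsup$-splitting argument (written as an inequality rather than an equality), followed by the remark that convergence of $b_T$ is then immediate.
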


\begin{proof}
We prove that $a_T \to a$ as follows
\[
a \le \liminf_{T \to \infty} a_T \le \limsup_{T \to \infty} a_T \le \limsup_{T \to \infty} (a_T + b_T) - \liminf_{T \to \infty} b_T \le (a + b) - b = a.
\]
Now, the convergence of $\{b_T\}_{T > 0}$ is immediate.
\end{proof}

The next theorem establishes stronger convergence properties of $\{\bar y_T\}_{T > 0}$ to $\bar y$.

\begin{theorem}
The following convergences hold
\begin{align}
&f(\cdot,\cdot,\bar y_T) \rightharpoonup f(\cdot,\cdot,\bar y) \text{ in } L^2(Q),\label{E4.6}\\
&\bar y_T \to \bar y \text{ in } L^2(\I;H^1(\Omega)), \label{E4.7}\\
&\lim_{T \to \infty}\|\bar y_T(T)\|_{L^2(\Omega)} = 0. \label{E4.8}
\end{align}
\label{T4.2}
\end{theorem}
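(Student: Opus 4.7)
The overall plan is to combine the convergences from Theorem \ref{T4.1} with an energy identity obtained by testing the difference equation for $e_T = \bar y - \bar y_T$ on $Q_T$ against $e_T$ itself. Three ingredients from the earlier material are crucial: the $L^2$-estimate \eqref{E2.7}, whose proof transcribes verbatim with $Q$ replaced by $Q_T$; the coercivity bound \eqref{E2.6}; and the one-sided bound \eqref{E2.5} on $\partial f/\partial y$.

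For \eqref{E4.6}, I would first note that the argument of Step 1 in the proof of Theorem \ref{T2.1} applied on $Q_T$ gives
\[
\|f(\cdot,\cdot,y_T)\|_{L^2(Q_T)} \le C_f\bigl(\|u_T\|_{L^2(Q_{T,\omega})}+\|g\|_{L^2(Q_T)}+\|y_T\|_{L^2(Q_T)}\bigr),
\]
with a constant $C_f$ independent of $T$. Since $\{\bar u_T\}$ and $\{\bar y_T\}$ are bounded in $L^2(Q_\omega)$ and $L^2(Q)$ by Theorem \ref{T4.1}, the family $\{f(\cdot,\cdot,\bar y_T)\}$ (with the natural extension by $0$ for $t>T$) is bounded in $L^2(Q)$. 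From $\bar y_T \to \bar y$ in $L^2(Q)$ I extract a subsequence converging to $\bar y$ pointwise a.e.\ on $Q$; continuity of $f$ in its third argument then yields $f(\cdot,\cdot,\bar y_{T_k})\to f(\cdot,\cdot,\bar y)$ a.e. The standard fact that pointwise a.e.\ convergence together with an $L^2$-bound implies weak convergence in $L^2$ now delivers \eqref{E4.6} along the subsequence, and uniqueness of the limit extends it to the whole family.

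For \eqref{E4.7} and \eqref{E4.8}, I set $e_T = \bar y - \bar y_T$ on $Q_T$. Subtracting the state equations satisfied by $\bar y$ and $\bar y_T$, $e_T$ fulfils $e_T(0)=0$, homogeneous Neumann data, and the parabolic equation with right-hand side $(\bar u-\bar u_T)\chi_\omega - [f(\cdot,\cdot,\bar y)-f(\cdot,\cdot,\bar y_T)]$. Testing with $e_T$, integrating over $(0,T)$, and invoking \eqref{E2.5} in the form $[f(\cdot,\cdot,\bar y)-f(\cdot,\cdot,\bar y_T)]\,e_T \ge -C_{M_f}\,e_T^2$ leads to
\[
\tfrac{1}{2}\|e_T(T)\|^2_{L^2(\Omega)} + \int_0^T\!\!\int_\Omega\bigl[|\nabla e_T|^2 + a\,e_T^2\bigr]\dx\dt \le C_{M_f}\|\bar y-\bar y_T\|^2_{L^2(Q)} + \int_0^T\!\!\int_\omega(\bar u-\bar u_T)\,e_T\dx\dt.
\]
The first term on the right tends to $0$ by \eqref{E4.1}; the second is bounded by $\|\bar u-\bar u_T\|_{L^2(Q_\omega)}\|\bar y-\bar y_T\|_{L^2(Q_\omega)}$ and hence also tends to $0$, since $\{\bar u_T\}$ is bounded in $L^2(Q_\omega)$. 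Both nonnegative summands on the left therefore vanish in the limit. Applying \eqref{E2.6} yields $\|e_T\|_{L^2(\IT;H^1(\Omega))}\to 0$, and adding the tail $\|\bar y\|_{L^2(T,\infty;H^1(\Omega))}\to 0$ (a consequence of $\bar y\in L^2(\I;H^1(\Omega))$ from \eqref{E2.8}) produces \eqref{E4.7}. Finally, $\|e_T(T)\|_{L^2(\Omega)}\to 0$ together with $\|\bar y(T)\|_{L^2(\Omega)}\to 0$ from \eqref{E2.9} gives \eqref{E4.8}.

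The most delicate point is the justification of the energy identity, since testing with $e_T$ requires $e_T\in L^2(\IT;H^1(\Omega))$ together with $\partial_t e_T \in L^2(\IT;H^1(\Omega)^*)$; both are available from \eqref{E2.8} for $\bar y$ and from the standard $W(0,T)\cap L^\infty(Q_T)$ regularity of $y_T$ on $Q_T$. The potential obstacle of non-monotonicity of $f$ is sidestepped because \eqref{E2.5} supplies a global linear lower bound on $\partial f/\partial y$, so the resulting $C_{M_f}\,e_T^2$ term is absorbed directly by the $L^2(Q)$-tail produced via \eqref{E4.1}, without invoking Gronwall's inequality.
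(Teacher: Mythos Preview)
Your proof is correct. For \eqref{E4.6} your argument is essentially the paper's: a uniform $L^2(Q)$-bound from the proof of \eqref{E2.7}, pointwise a.e.\ convergence along a subsequence via \eqref{E4.1}, and identification of the weak limit.

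For \eqref{E4.7} and \eqref{E4.8}, however, you take a genuinely different route. The paper first establishes weak $L^2(\I;H^1(\Omega))$-convergence of $\bar y_T$ from the equation, and then runs a $\liminf/\limsup$ squeeze on the quadratic form $\int_Q[|\nabla\bar y_T|^2+a\bar y_T^2]$ plus the terminal energy $\tfrac12\|\bar y_T(T)\|_{L^2(\Omega)}^2$, rewriting this via the energy identity for $\bar y_T$ as $\int_Q(g+\bar u_T\chi_\omega)\bar y_T - \int_Q f(\cdot,\cdot,\bar y_T)\bar y_T + \tfrac12\|y_0\|^2$ and passing to the limit using \eqref{E4.1}, \eqref{E4.2}, and \eqref{E4.6}; the result matches the corresponding energy identity for $\bar y$ (which uses \eqref{E2.9} to kill the term at infinity), forcing both norm convergence and \eqref{E4.8}. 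Your approach instead subtracts the equations, tests with $e_T=\bar y-\bar y_T$, and absorbs the nonlinear contribution directly via the global lower bound \eqref{E2.5}, so that the whole estimate is driven by $\|\bar y-\bar y_T\|_{L^2(Q)}\to 0$ alone. This is more elementary and more quantitative: it does not need \eqref{E4.6} as an input, nor the preliminary weak $H^1$-convergence of $\bar y_T$. The paper's argument, on the other hand, avoids invoking \eqref{E2.5} at this stage and makes the dependence on the energy structure more transparent.
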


\begin{proof}
Arguing as in the proof of \eqref{E2.7} we get
\[
\|f(\cdot,\cdot,\bar y_T)\|_{L^2(Q)} \le C_f\big(\|\bar u_T\|_{L^2(Q_\omega)} + \|g\|_{L^2(Q)} + \|\bar y_T\|_{L^2(Q)}).
\]
From Theorem \ref{T4.1} we deduce that the right hand side of the above inequality is uniformly bounded in $T$. Hence, there exist subsequences $\{f(\cdot,\cdot,\bar y_{T_k})\}_{k = 1}^\infty$ with $T_k \to \infty$ such that $f(\cdot,\cdot,\bar y_{T_k}) \rightharpoonup \psi$ in $L^2(Q)$. Due to the strong convergence $\bar y_{T_k} \to \bar y$ in $L^2(Q)$, we can extract a subsequence, denoted in the same way, such that $\bar y_{T_k}(x,t) \to \bar y(x,t)$ for almost all $(x,t) \in Q$. Then, the continuity of $f$ with respect to $y$ implies that $f(x,t,\bar y_{T_k}(x,t)) \to f(x,t,\bar y(x,t))$ almost everywhere. Hence, $\psi = f(\cdot,\cdot,\bar y)$ and the whole family $\{f(\cdot,\cdot,\bar y_T)\}_{T > 0}$ converges weakly in $L^2(Q)$ to $f(\cdot,\cdot,\bar y)$.

Let us prove that $\bar y_T \rightharpoonup \bar y$ in $L^2(\I;H^1(\Omega))$. From the equation satisfied by $\bar y_T$ we get  $\frac{\partial\bar y_T}{\partial t} - \Delta\bar y_T + a\bar y_T = g + \bar u_T\chi_\omega - f(\cdot,\cdot,\bar y_T)$ in $Q_T$.  Since the right hand side is uniformly bounded in $L^2(Q)$, we deduce the existence of a constant $C$ such that $\|\bar y_T\|_{W(0,T)} \le C$ for every $T > 0$. Therefore, the estimate $\|\bar y_T\|_{L^2(\IT;H^1(\Omega))} \le C$ holds for every $T$. Due to the fact that $\bar y_T = 0$ in $Q \setminus Q_T$, we conclude that $\{\bar y_T\}_{T > 0}$ is bounded in $L^2(\I;H^1(\Omega))$. Then, the convergence $\bar y_T \to \bar y$ in $L^2(Q)$ implies that $\bar y_T \rightharpoonup \bar y$ in $L^2(\I;H^1(\Omega))$. To prove \eqref{E4.7} and \eqref{E4.8} we argue, using \eqref{E4.1} and \eqref{E4.2}, as follows
\begin{align*}
&\int_Q[|\nabla\bar y|^2 + a\bar y^2]\dx\dt \le \liminf_{T \to \infty}\int_Q[|\nabla\bar y_T|^2 + a\bar y_T^2]\dx\dt\\
&\le \limsup_{T \to \infty}\int_Q[|\nabla\bar y_T|^2 + a\bar y_T^2]\dx\dt\\
&\le \limsup_{T \to \infty}\left\{\frac{1}{2}\|\bar y_T(T)\|^2_{L^2(\Omega)} + \int_Q[|\nabla\bar y_T|^2 + a\bar y_T^2]\dx\dt\right\}\\
&=\limsup_{T \to \infty}\left\{\int_Q(g + \bar u_T\chi_\omega)\bar y_T\dx\dt - \int_Qf(x,t,\bar y_T)\bar y_T\dx\dt + \frac{1}{2}\|y_0\|^2_{L^2(\Omega)}\right\}\\
& = \int_Q(g + \bar u\chi_\omega)\bar y \dx\dt - \int_Qf(x,t,\bar y)\bar y\dx\dt + \frac{1}{2}\|y_0\|^2_{L^2(\Omega)} = \int_Q[|\nabla\bar y|^2 + a\bar y^2]\dx\dt.
\end{align*}
These inequalities yield
\[
\lim_{T \to \infty}\|\bar y_T\|_{L^2(\I;H^1(\Omega))} = \|\bar y\|_{L^2(\I;H^1(\Omega))} \ \text{ and }\ \lim_{T \to \infty}\|\bar y_T(T)\|_{L^2(\Omega)} = 0,
\]
which  concludes the proof.
\end{proof}

\begin{theorem}
For every $\varepsilon > 0$ there exists $T_\varepsilon > 0$ such that
\begin{equation}
\|\bar y_T(t)\|_{L^2(\Omega)} < \varepsilon\quad \forall T > T_\varepsilon \text{ and } \forall t > T_\varepsilon.
\label{E4.9}
\end{equation}
\label{T4.3}
\end{theorem}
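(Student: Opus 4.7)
The plan is to exploit an energy identity on the trailing subinterval $[t,T]$, combined with the uniform-in-$T$ tail smallness provided by Theorem \ref{T4.2}. Since $\bar y_T$ has been extended by zero past $T$, the bound is trivial for $t>T$, so it suffices to treat $t\in(T_\varepsilon,T]$ when $T>T_\varepsilon$. For such $t$, using $\bar y_T\in W(0,T)$ I would write
\begin{equation*}
\|\bar y_T(t)\|^2_{L^2(\Omega)}=\|\bar y_T(T)\|^2_{L^2(\Omega)}-2\int_t^T\langle\bar y_T'(s),\bar y_T(s)\rangle\ds,
\end{equation*}
which by Cauchy--Schwarz yields
\begin{equation*}
\|\bar y_T(t)\|^2_{L^2(\Omega)}\le\|\bar y_T(T)\|^2_{L^2(\Omega)}+2\|\bar y_T'\|_{L^2(0,T;H^1(\Omega)^*)}\,\|\bar y_T\|_{L^2(t,T;H^1(\Omega))}.
\end{equation*}
The first summand tends to $0$ by \eqref{E4.8}, so it can be made smaller than $\varepsilon^2/2$ once $T$ exceeds some threshold.

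Next I would obtain a $T$-independent bound $\|\bar y_T'\|_{L^2(0,T;H^1(\Omega)^*)}\le M$ by solving \eqref{E2.1} for the time derivative, $\bar y_T'=\Delta\bar y_T-a\bar y_T-f(\cdot,\cdot,\bar y_T)+g+\bar u_T\chi_\omega$ on $Q_T$. Since $\|\Delta\bar y_T\|_{H^1(\Omega)^*}\le\|\bar y_T\|_{H^1(\Omega)}$ and $\{\bar y_T\}$ is uniformly bounded in $L^2(\I;H^1(\Omega))$ by \eqref{E4.7}, the Laplacian term is uniformly controlled; the remaining summands are uniformly bounded in $L^2(Q)\hookrightarrow L^2(\I;H^1(\Omega)^*)$ by \eqref{E4.6}, Theorem \ref{T4.1}, the fixed data $g\in L^2(Q)$, and $a\in L^\infty(\Omega)$.

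The core of the argument, and the only subtle step, is to make $\|\bar y_T\|_{L^2(t,T;H^1(\Omega))}$ small \emph{uniformly in $T$} by taking $t$ large. For this I would combine \eqref{E4.7} with $\bar y\in L^2(\I;H^1(\Omega))$ via a tail-and-triangle argument: pick $T_1$ so that $\int_{T_1}^\infty\|\bar y\|^2_{H^1(\Omega)}\ds<\varepsilon^4/(64M^2)$, and $T_2\ge T_1$ so that $\|\bar y_T-\bar y\|^2_{L^2(\I;H^1(\Omega))}<\varepsilon^4/(64M^2)$ for every $T>T_2$; the inequality $\|\bar y_T\|^2\le 2\|\bar y\|^2+2\|\bar y_T-\bar y\|^2$ then gives $\int_t^T\|\bar y_T\|^2_{H^1(\Omega)}\ds<\varepsilon^4/(16M^2)$ for every $t\ge T_1$ and $T>T_2$, so that the second summand in the energy estimate is smaller than $\varepsilon^2/2$. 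Choosing $T_\varepsilon=\max\{T_1,T_2,T_3\}$, where $T_3$ is the threshold from \eqref{E4.8} giving $\|\bar y_T(T)\|^2<\varepsilon^2/2$, yields \eqref{E4.9}. The main obstacle here is precisely this uniform tail control: a pointwise-in-$T$ estimate would not work, and the argument succeeds only because Theorem \ref{T4.2} has already upgraded the weak convergence in Theorem \ref{T4.1} to strong convergence in $L^2(\I;H^1(\Omega))$.
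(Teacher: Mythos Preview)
Your proof is correct and follows essentially the same route as the paper: the same backward energy identity on $[t,T]$, the same uniform bound on $\|\bar y_T'\|_{L^2(0,T;H^1(\Omega)^*)}$, and the same tail control via the triangle inequality splitting $\bar y_T$ into $\bar y$ and $\bar y_T-\bar y$, using \eqref{E4.7} and \eqref{E4.8}. The only differences are cosmetic (you square before applying the triangle inequality and rederive the $W(0,T)$ bound from the equation, whereas the paper cites it from the proof of Theorem~\ref{T4.2}).
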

\begin{proof}
In the proof of Theorem \ref{T4.2}, the existence of a constant $C$ such that $\|\bar y_T\|_{W(0,T)} \le C$ for every $T$ was established. Hence, we have that
\[
\big\|\frac{\partial\bar y_T}{\partial t}\big\|_{L^2(\IT,H^1(\Omega)^*)} \le C\quad \forall T > 0.
\]
By Theorem \ref{T4.2}, for every $\varepsilon >0$ there exists $T_\varepsilon > 0$ such that for every $T > T_\varepsilon$ the inequalities
\[
\|\bar y_T(T)\|_{L^2(\Omega)} < \frac{\varepsilon}{ \sqrt{3}},\ \|\bar y - \bar y_T\|_{L^2(\I;H^1(\Omega))} < \frac{\varepsilon^2}{6C}, \ \|\bar y\|_{L^2(T_\varepsilon,\infty;H^1(\Omega))} < \frac{\varepsilon^2}{6C}
\]
hold. Let us take $T > T_\varepsilon$ arbitrary. For $t = T$, \eqref{E4.9} follows from the choice of $\varepsilon$. If $t > T$, then we have $\bar y_T(t) = 0$ and \eqref{E4.9} holds. Let us take $t \in (T_\varepsilon,T)$, then
\begin{align*}
&\|\bar y_T(t)\|^2_{L^2(\Omega)} = \|\bar y_T(T)\|^2_{L^2(\Omega)} - 2\int_t^T\langle\frac{\partial\bar y_T}{\partial t},\bar y_T\rangle\dt\\
&\le \|\bar y_T(T)\|^2_{L^2(\Omega)} + 2 \|\frac{\partial\bar y_T}{\partial t}\|_{L^2(\IT;H^1(\Omega)^*)}\|\bar y_T\|_{L^2(t,T;H^1(\Omega))}\\
&\le \|\bar y_T(T)\|^2_{L^2(\Omega)} + 2C\Big(\|\bar y - \bar y_T\|_{L^2(\I;H^1(\Omega))} + \|\bar y\|_{L^2(T_\varepsilon,\infty;H^1(\Omega))}\Big) < \varepsilon^2,
\end{align*}
which proves \eqref{E4.9}.
\end{proof}

\begin{remark}
Let us observe that $\{\bar y_T\}_{T > 0}$ is uniformly bounded in $L^\infty(Q_{T'})$ for each $T' \in (0,\infty)$. Indeed, using \eqref{E2.5}, we can derive the usual $L^\infty(Q_{T'})$ estimates for each $\bar y_T$ depending on $T'$, $\|y_0\|_{L^\infty(\Omega)}$, and $\|g + \bar u_T\chi_\omega\|_{L^\infty(0,T';L^2(\Omega)}$, which is uniformly bounded. As a consequence of this, \eqref{E2.2}, and \eqref{E2.4}, we also have the uniform boundedness of $\{f(\cdot,\cdot,\bar y_T)\}_{T > 0}$ and $\{\frac{\partial f}{\partial y}(\cdot,\cdot,\bar y_T)\}_{T > 0}$ in $L^\infty(Q_{T'})$.
\label{R4.1}
\end{remark}

\subsection{Convergence of the adjoint states}
\label{S4.2}

In this section, besides the assumptions \eqref{E2.2}--\eqref{E2.4}, we will make the following assumption
\begin{equation}
\exists\, m_f \in (0,M_f) \text{ such that } \frac{\partial f}{\partial y}(x,t,y) \ge 0 \text{ if } |y| < m_f
\label{E4.10}
\end{equation}
for almost all $(x,t) \in Q$, where $M_f$ is the constant  introduced  in \eqref{E2.3}. Let us start proving some auxiliary results before analyzing the convergence of $\{\bar\varphi_T\}_{T > 0}$.

\begin{lemma}
For every $T > 0$ let $z_T \in W(0,T)$ be the solution of the equation
\begin{equation}
\left\{\begin{array}{l}
\displaystyle
\frac{\partial z_T}{\partial t}- \Delta z_T + az_T + \frac{\partial f}{\partial y}(x,t,\bar y_T)z_T = \bar\varphi_T \mbox{ in } Q_T,\\[0.5ex] \partial_nz_T = 0 \mbox{ on } \Sigma_T,\
z_T(0) = 0 \mbox{ in } \Omega.
\end{array}\right.
\label{E4.11}
\end{equation}
Then, there exists a constant $C_z$ such that
\begin{equation}
\|z_T\|_{L^2( Q_T)} \le C_z\|\bar\varphi_T\|_{L^2(Q_T)}\quad \forall T > 0.
\label{E4.12}
\end{equation}
\label{L4.2}
\end{lemma}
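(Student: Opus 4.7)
The plan is to carry out an energy estimate for \eqref{E4.11} tested with $z_T$ itself and to extract a constant that is independent of $T$. Multiplying \eqref{E4.11} by $z_T$, integrating over $\Omega \times (0,t)$, using $z_T(0) = 0$, and applying \eqref{E2.6}, I would obtain for every $t \in [0,T]$
\[
\tfrac{1}{2}\|z_T(t)\|_{L^2(\Omega)}^2 + C_a^2\int_0^t\|z_T\|_{H^1(\Omega)}^2\ds + \int_0^t\!\!\int_\Omega \frac{\partial f}{\partial y}(x,s,\bar y_T)z_T^2\dx\ds \le \int_0^t\!\!\int_\Omega \bar\varphi_T z_T\dx\ds.
\]
The difficulty is that $\tfrac{\partial f}{\partial y}(\cdot,\cdot,\bar y_T)$ is only bounded from below by $-C_{M_f}$ (cf.\ \eqref{E2.5}), so a direct Gr\"onwall argument over the whole interval $(0,T)$ would deliver a constant of order $e^{CT}$. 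The key observation is that, combining \eqref{E2.3} with the new assumption \eqref{E4.10}, $\tfrac{\partial f}{\partial y}(\cdot,\cdot,\bar y_T) \ge 0$ everywhere outside the \emph{bad set}
\[
B_s = \{x \in \Omega : m_f \le |\bar y_T(x,s)| \le M_f\},
\]
while $\bigl|\tfrac{\partial f}{\partial y}(\cdot,\cdot,\bar y_T)\bigr| \le C_{M_f}$ inside $B_s$ by \eqref{E2.4}.

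Next I would exploit Chebyshev's inequality, $|B_s| \le m_f^{-2}\|\bar y_T(s)\|_{L^2(\Omega)}^2$, together with Theorem \ref{T4.3}, which furnishes for every $\varepsilon > 0$ a threshold $T_\varepsilon$, independent of $T$, such that $\|\bar y_T(s)\|_{L^2(\Omega)} < \varepsilon$ whenever $T > T_\varepsilon$ and $s > T_\varepsilon$; in particular $|B_s|^{1/2} \le \varepsilon/m_f$ on $(T_\varepsilon,T)$. Using H\"older's inequality and the embedding $H^1(\Omega) \hookrightarrow L^4(\Omega)$ (valid since $n \le 3$) with constant $C_4$, I would estimate
\[
\int_{B_s}\Bigl|\frac{\partial f}{\partial y}(x,s,\bar y_T)\Bigr|z_T^2\dx \le C_{M_f}\|z_T(s)\|_{L^4(\Omega)}^2 |B_s|^{1/2} \le \frac{C_{M_f}C_4^2\varepsilon}{m_f}\|z_T(s)\|_{H^1(\Omega)}^2.
\]
Fixing $\varepsilon$ so small that the prefactor is strictly below $C_a^2$ and splitting the time integral at $T_\varepsilon$, the bad-set contribution on $(T_\varepsilon,T)$ can be absorbed into the dissipative term $C_a^2\int\|z_T\|_{H^1}^2$; together with a Young inequality applied to $\int\bar\varphi_T z_T$, this leads to a uniform-in-$T$ bound of the form
\[
\int_{T_\varepsilon}^T \|z_T\|_{H^1(\Omega)}^2\ds \le C\bigl(\|z_T(T_\varepsilon)\|_{L^2(\Omega)}^2 + \|\bar\varphi_T\|_{L^2(Q_T)}^2\bigr).
\]

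Finally, the short-time contribution on $[0,T_\varepsilon]$ is harmless because the interval is bounded: the crude lower bound \eqref{E2.5} together with a standard Gr\"onwall argument yields
\[
\|z_T(T_\varepsilon)\|_{L^2(\Omega)}^2 + \int_0^{T_\varepsilon}\|z_T(s)\|_{L^2(\Omega)}^2\ds \le C(T_\varepsilon)\|\bar\varphi_T\|_{L^2(Q_{T_\varepsilon})}^2,
\]
with $C(T_\varepsilon)$ depending on the \emph{fixed} $\varepsilon$ but not on $T$. For $T \le T_\varepsilon$ the same short-time estimate delivers the claim directly; for $T > T_\varepsilon$, combining it with the previous long-time bound and using $\|z_T\|_{L^2(\Omega)}\le\|z_T\|_{H^1(\Omega)}$ produces the desired inequality $\|z_T\|_{L^2(Q_T)} \le C_z\|\bar\varphi_T\|_{L^2(Q_T)}$. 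The main obstacle is precisely the exponential-in-$T$ blow-up threatened by the sign-indefinite reaction coefficient $\tfrac{\partial f}{\partial y}(\bar y_T)$; it is defeated by invoking \eqref{E4.10} and Theorem \ref{T4.3} to show that the region on which this coefficient can be negative has measure shrinking uniformly in $T$ as $s$ grows, so that its destabilising contribution is dominated by the dissipation $C_a^2\|z_T\|_{H^1(\Omega)}^2$ provided by \eqref{E2.6}.
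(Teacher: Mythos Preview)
Your proposal is correct and follows essentially the same route as the paper: split the time axis at a fixed threshold $T_\varepsilon$ furnished by Theorem~\ref{T4.3}, handle $[0,T_\varepsilon]$ by a standard Gr\"onwall/energy estimate (with a constant depending only on the fixed $T_\varepsilon$), and on $(T_\varepsilon,T)$ use the smallness of $\|\bar y_T(s)\|_{L^2(\Omega)}$ together with the $H^1\hookrightarrow L^4$ embedding to absorb the contribution of the sign-indefinite region $\{m_f\le|\bar y_T|\le M_f\}$ into the coercive term $C_a^2\|z_T\|_{H^1(\Omega)}^2$. The only cosmetic difference is that the paper inserts the factor $|\bar y_T|/m_f\ge 1$ on the bad set and applies H\"older with $L^2\times L^4\times L^4$ to arrive at $\tfrac{C_{M_f}C_4^2}{m_f}\|\bar y_T(s)\|_{L^2(\Omega)}\|z_T(s)\|_{H^1(\Omega)}^2$, whereas you reach the same bound via Chebyshev's inequality $|B_s|^{1/2}\le m_f^{-1}\|\bar y_T(s)\|_{L^2(\Omega)}$; the two manipulations are equivalent.
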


\begin{proof}
It is well known that $H^1(\Omega)$ is continuously embedded in $L^4(\Omega)$ for $1 \le n \le 3$. Hence, there exists a constant $C_4$ such that $\|y\|_{L^4(\Omega)} \le C_4\|y\|_{H^1(\Omega)}$ $\forall y \in H^1(\Omega)$. From \eqref{E2.4} we deduce the existence of a constant $C_{M_f}$ such that
\begin{equation}
\big|\frac{\partial f}{\partial y}(x,t,y)\big| \le C_{M_f}\quad \forall |y| \le M_f \text{ for a.a. } (x,t) \in Q.
\label{E4.13}
\end{equation}
Applying Theorem \ref{T4.3} with $\varepsilon = \frac{m_fC_a^2}{2C_4^2C_{M_f}}$, where $C_a$ was introduced in \eqref{E2.6}, we infer the existence of $T_{a,f} > 0$ such that
\begin{equation}
\|\bar y_T(t)\|_{L^2(\Omega)} < \frac{m_fC_a^2}{2C_4^2C_{M_f}}\ \ \forall T > T_{a,f} \text{ and } \forall t > T_{a,f}.
\label{E4.14}
\end{equation}
For $T\le T_{a,f}$ inequality \eqref{E4.12} is obvious. Henceforth we consider the case $T> T_{a,f}$. We obtain in a standard manner
\begin{equation}
\|z_T\|_{L^2(Q_{T_{a,f}})} \le C\|\bar\varphi_T\|_{L^2(Q_T)} \quad \forall T > T_{a,f}
\label{E4.15}
\end{equation}
for some constant $C$ independent of $T$. For every $t \in (0,T)$ we define the sets
\[
\Omega_t^{m_f,M_f} = \{x \in \Omega : m_f \le  |\bar y_T(x,t)|  \le M_f\}.
\]
Assumptions \eqref{E2.3} and \eqref{E4.10}  imply that $\frac{\partial f}{\partial y}(x,t,\bar y_T(x,t)) \ge 0$ for $x \in \Omega \setminus \Omega_t^{m_f,M_f}$. Then, from \eqref{E2.6} and equation \eqref{E4.11} we get
\begin{align}
&C_a^2\int_0^T\|z_T\|^2_{H^1(\Omega)}\dt\notag\\
&\le \frac{1}{2}\|z_T(T)\|^2_{L^2(\Omega)} + \int_{Q_T}[|\nabla z_T|^2 + az_T^2]\dx\dt + \int_0^T\int_{\Omega \setminus \Omega_t^{m_f,M_f}}\frac{\partial f}{\partial y}(x,t,\bar y_T)z_T^2\dx\dt\notag\\
& = \int_ {Q_T}\bar\varphi_Tz_T\dx\dt - \int_0^T\int_{\Omega_t^{m_f,M_f}} \frac{\partial f}{\partial y}(x,t,\bar y_T)z_T^2\dx\dt\notag\\
& \le \|\bar\varphi_T\|_{L^2(Q_T)}\|z_T\|_{L^2(Q_T)} + \int_0^T\int_{\Omega_t^{m_f,M_f}} \big|\frac{\partial f}{\partial y}(x,t,\bar y_T)\big|z_T^2\dx\dt. \label{E4.16}
\end{align}
Let us estimate the last integral. We split the integral into two parts. Inequalities \eqref{E4.13} and \eqref{E4.15} yield
\begin{align}
&\int_0^{T_{a,f}}\int_{\Omega_t^{m_f,M_f}} \big|\frac{\partial f}{\partial y}(x,t,\bar y_T)\big|z_T^2\dx\dt\notag\\
& \le C_{M_f}\|z_T\|^2_{L^2(0,T_{a,f};L^2(\Omega))} \le C_{M_f} C^2  \|\bar\varphi_T\|^2_{L^2(Q_T)}.
\label{E4.17}
\end{align}
Now, from \eqref{E4.13}--\eqref{E4.14} we infer
\begin{align}
&\int_{T_{a,f}}^T\int_{\Omega_t^{m_f,M_f}} \big|\frac{\partial f}{\partial y}(x,t,\bar y_T)\big|z_T^2\dx\dt \le \frac{C_{M_f}}{m_f}\int_{T_{a,f}}^T\int_{\Omega_t^{m_f,M_f}} |\bar y_T|z_T^2\dx\dt\notag\\
&\le \frac{C_{M_f}}{m_f}\int_{T_{a,f}}^T\|\bar y_T\|_{L^2(\Omega)}\|z_T\|^2_{L^4(\Omega)}\dt \le \frac{C_{M_f}C_4^2}{m_f}\int_{T_{a,f}}^T\|\bar y_T\|_{L^2(\Omega)}\|z_T\|^2_{H^1(\Omega)}\dt\notag\\
&\le \frac{C_a^2}{2}\int_0^T\|z_T\|^2_{H^1(\Omega)}\dt. \label{E4.18}
\end{align}
From \eqref{E4.16}--\eqref{E4.18} we get with Young's inequality
\begin{align*}
&\frac{C_a^2}{2}\|z_T\|^2_{L^2(Q_T)} \le \frac{C_a^2}{2}\int_0^T\|z_T\|^2_{H^1(\Omega)}\dt \le \|\bar\varphi_T\|_{L^2(Q_T)}\|z_T\|_{L^2(Q_T)}\\
& + C_{M_f}C^2 \|\bar\varphi_T\|^2_{L^2(Q_T)} \le \frac{C_a^2}{4}\|z_T\|^2_{L^2(Q_T)} + \big(\frac{1}{C_a^2} + C_{M_f}C^2 \big)\|\bar\varphi_T\|^2_{L^2(Q_T)},
\end{align*}
which proves \eqref{E4.12} for $T > T_{a,f}$.
\end{proof}

\begin{lemma}
There exists a constant $C_\varphi$ such that
\begin{equation}
\|\bar\varphi_T\|_{L^2(Q)} \le C_\varphi\quad \forall T > 0.
\label{E4.19}
\end{equation}
\label{L4.3}
\end{lemma}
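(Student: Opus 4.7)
The plan is to bound $\|\bar\varphi_T\|_{L^2(Q)}$ by duality, testing the adjoint equation \eqref{E3.1} against the solution $z_T$ of \eqref{E4.11} constructed in Lemma \ref{L4.2}. Concretely, since $\bar\varphi_T$ is the zero extension of $\varphi_T$ to $Q$, we have $\|\bar\varphi_T\|_{L^2(Q)}^2 = \|\varphi_T\|_{L^2(Q_T)}^2$, and the key identity is obtained by multiplying \eqref{E4.11} by $\varphi_T$, integrating over $Q_T$, and using the boundary data $z_T(0)=0$, $\varphi_T(T)=0$ together with the homogeneous Neumann conditions.

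More precisely, because $z_T, \varphi_T \in W(0,T)$, integration by parts in time yields $\int_{Q_T}\frac{\partial z_T}{\partial t}\varphi_T\dx\dt = -\int_{Q_T}z_T\frac{\partial\varphi_T}{\partial t}\dx\dt$, while Green's formula gives $\int_{Q_T}(-\Delta z_T)\varphi_T\dx\dt = \int_{Q_T}z_T(-\Delta\varphi_T)\dx\dt$. Combining these with the zero-order and first-order perturbation terms (which carry over unchanged), and invoking the adjoint equation \eqref{E3.1}, we obtain
\[
\|\varphi_T\|_{L^2(Q_T)}^2 = \int_{Q_T}\bar\varphi_T z_T\dx\dt = \int_{Q_T}z_T\Bigl(-\frac{\partial\varphi_T}{\partial t}-\Delta\varphi_T+a\varphi_T+\frac{\partial f}{\partial y}(x,t,y_T)\varphi_T\Bigr)\dx\dt = \int_{Q_T}z_T(2y_T - y_d - \bar y)\dx\dt.
\]

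Applying Cauchy--Schwarz and Lemma \ref{L4.2} to the right-hand side gives
\[
\|\bar\varphi_T\|_{L^2(Q)}^2 \le \|z_T\|_{L^2(Q_T)}\|2y_T - y_d - \bar y\|_{L^2(Q_T)} \le C_z\|\bar\varphi_T\|_{L^2(Q)}\|2y_T - y_d - \bar y\|_{L^2(Q_T)},
\]
hence $\|\bar\varphi_T\|_{L^2(Q)} \le C_z\|2y_T - y_d - \bar y\|_{L^2(Q)}$. The right-hand side is uniformly bounded in $T$: by Theorem \ref{T4.1} the family $\{\bar y_T\}_{T>0}$ is bounded in $L^2(Q)$ (in fact it converges to $\bar y$), and $y_d, \bar y \in L^2(Q)$. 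This delivers the desired uniform bound $C_\varphi$.

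The only point requiring care is the justification of the integration-by-parts steps, which is routine since both $z_T$ and $\varphi_T$ lie in $W(0,T)$ and satisfy homogeneous Neumann conditions; the rest is a clean duality estimate. The conceptual heart of the argument is therefore not in the present lemma but in Lemma \ref{L4.2}, which supplies the $L^2$-to-$L^2$ bound for the linearized state equation uniformly in $T$ -- this is the place where the sign assumptions \eqref{E2.3}, \eqref{E4.10} and the asymptotic decay \eqref{E4.9} of $\bar y_T$ were needed.
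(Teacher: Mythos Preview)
Your proof is correct and follows essentially the same approach as the paper: both test the adjoint equation \eqref{E3.1} against the auxiliary function $z_T$ from Lemma \ref{L4.2}, integrate by parts using $z_T(0)=0$ and $\varphi_T(T)=0$, and then apply the bound \eqref{E4.12} together with the uniform $L^2(Q)$-boundedness of $\{\bar y_T\}_{T>0}$ from Theorem \ref{T4.1}. Your write-up is slightly more explicit about the integration-by-parts justification, but the argument is otherwise identical.
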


\begin{proof}
Let $z_T$ be as in Lemma \ref{L4.2}. From equations \eqref{E3.1} and \eqref{E4.11}, and inequality \eqref{E4.12} we infer
\begin{align*}
&\|\bar\varphi_T\|^2_{L^2(Q)} = \|\bar\varphi_T\|^2_{L^2(Q_T)} = \int_{Q_T}\Big[\frac{\partial z_T}{\partial t}- \Delta z_T + az_T + \frac{\partial f}{\partial y}(x,t,\bar y_T)z_T\Big]\bar\varphi_T\dx\dt\\
&=\int_{Q_T}\Big[-\frac{\partial\bar\varphi_T}{\partial t}- \Delta\bar\varphi_T + a\bar\varphi_T + \frac{\partial f}{\partial y}(x,t,y_T)\bar\varphi_T\Big]z_T\dx\dt\\
&= \int_{Q_T}(2\bar y_T - y_d - \bar y)z_T\dx\dt \le C_z\Big(2\|\bar y_T\|_{L^2(Q)} + \|y_d\|_{L^2(Q)} + \|\bar y\|_{L^2(Q)}\Big)\|\bar\varphi_T\|_{L^2(Q)}.
\end{align*}
Since $\{\bar y_T\}_{T > 0}$ is uniformly bounded in $L^2(Q)$, the above inequalities imply \eqref{E4.19}.
\end{proof}

Using Lemma \ref{L4.2} we infer the existence of an increasing sequence $\{T_k\}_{k = 1}^\infty$ converging to $\infty$ and a function $\bar\varphi \in L^2(Q)$ such that $\bar\varphi_{T_k} \rightharpoonup \bar\varphi$ in $L^2(Q)$ as $k \to \infty$. The next lemma  establishes stronger convergences properties of the adjoint states.

\begin{lemma}
The following convergences hold
\begin{align}
&\bar\varphi_{T_k} \stackrel{*}{\rightharpoonup} \bar\varphi \text{ in } L^2(\I;H^1(\Omega)) \cap L^\infty(\I;L^2(\Omega)),\label{E4.21}\\
&\bar\varphi_{T_k} \rightharpoonup \bar\varphi \text{ in } W(0,T)\quad\forall T \in (\I). \label{E4.22}
\end{align}
Moreover, there exists a subsequence of $\{T_k\}_{k = 1}^\infty$, denoted in the same way, such that
\begin{equation}
\lim_{k \to \infty}\|\bar\varphi_{T_k}(t)\|_{L^2(\Omega)} = \|\bar\varphi(t)\|_{L^2(\Omega)}\ \ \text{for a.a. } t \in (\I).
\label{E4.23}
\end{equation}
\label{L4.4}
\end{lemma}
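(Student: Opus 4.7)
The plan is to first secure uniform bounds on $\{\bar\varphi_T\}_{T > 0}$ in $L^\infty(\I; L^2(\Omega)) \cap L^2(\I; H^1(\Omega))$, and then to pass to the limit by compactness. Since $\bar\varphi_{T_k} \rightharpoonup \bar\varphi$ in $L^2(Q)$ is already known, uniqueness of the weak limit will force the weak-$\ast$ limit in the finer spaces to be $\bar\varphi$ as well, giving \eqref{E4.21} along the full sequence $\{T_k\}$ without a further extraction.

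The core of the argument is the uniform bound, which I would obtain by a backward energy estimate on \eqref{E3.1}, mirroring the proof of Lemma \ref{L4.2}. Testing \eqref{E3.1} with $\bar\varphi_T$ and integrating on $\Omega \times (t,T)$, using $\bar\varphi_T(T) = 0$ and \eqref{E2.6}, yields
\[
\frac{1}{2}\|\bar\varphi_T(t)\|^2_{L^2(\Omega)} + C_a^2 \int_t^T \|\bar\varphi_T\|^2_{H^1(\Omega)}\,ds + \int_t^T \int_\Omega \frac{\partial f}{\partial y}(x,s,\bar y_T)\bar\varphi_T^2 \dx\,ds \le \int_t^T \int_\Omega (2\bar y_T - y_d - \bar y)\bar\varphi_T \dx\,ds.
\]
By \eqref{E2.3} and \eqref{E4.10}, the only troublesome contribution of $\frac{\partial f}{\partial y}$ is concentrated on the set $\{m_f \le |\bar y_T| \le M_f\}$, whose $x$-measure is bounded by $\|\bar y_T(s)\|^2_{L^2(\Omega)}/m_f^2$. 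Combining this with the Sobolev embedding $H^1(\Omega) \hookrightarrow L^4(\Omega)$ and the smallness provided by Theorem \ref{T4.3}, the bad contribution is absorbed into $\frac{C_a^2}{2}\int_t^T \|\bar\varphi_T\|^2_{H^1(\Omega)}\,ds$ whenever $t \ge T_{a,f}$, exactly as in the proof of Lemma \ref{L4.2}. The right-hand side is bounded uniformly in $T$ by Lemma \ref{L4.3} and Theorem \ref{T4.1}, yielding the uniform bound for $t \in [T_{a,f},\infty)$. On the fixed interval $[0, T_{a,f}]$ I would instead use $|\frac{\partial f}{\partial y}| \le C_{M_f}$ together with a backward Gronwall argument starting from the already-controlled value of $\|\bar\varphi_T(T_{a,f})\|_{L^2(\Omega)}$; this gives the $L^\infty(0,T_{a,f};L^2(\Omega))$-bound, and the energy identity then delivers the $L^2(0,T_{a,f};H^1(\Omega))$-bound.

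For \eqref{E4.22} I fix $T > 0$ and read \eqref{E3.1} as
\[
\frac{\partial\bar\varphi_T}{\partial t} = -\Delta\bar\varphi_T + a\bar\varphi_T + \frac{\partial f}{\partial y}(x,t,\bar y_T)\bar\varphi_T - (2\bar y_T - y_d - \bar y).
\]
By Remark \ref{R4.1}, $\frac{\partial f}{\partial y}(\cdot,\cdot,\bar y_T)$ is uniformly bounded in $L^\infty(Q_T)$, so the right-hand side is uniformly bounded in $L^2(0,T;H^1(\Omega)^*)$. Combined with the $L^2(0,T;H^1(\Omega))$-bound, this implies boundedness of $\{\bar\varphi_{T_k}\}$ in $W(0,T)$, and the weak limit is again $\bar\varphi$ by the same uniqueness-of-limit argument.

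Finally, for \eqref{E4.23} the Aubin--Lions lemma gives the compact embedding $W(0,T) \hookrightarrow L^2(0,T;L^2(\Omega))$, hence $\bar\varphi_{T_k} \to \bar\varphi$ strongly in $L^2(0,T;L^2(\Omega))$ for each $T > 0$. A diagonal extraction arranges this simultaneously for all $T \in \mathbb{N}$, after which a further subsequence can be chosen along which $\bar\varphi_{T_k}(t) \to \bar\varphi(t)$ in $L^2(\Omega)$ for almost every $t \in (\I)$, proving \eqref{E4.23}. The main obstacle throughout is the uniform $L^\infty(\I; L^2(\Omega)) \cap L^2(\I; H^1(\Omega))$-bound on the adjoint states; once this is in hand, the remaining items reduce to standard compactness arguments.
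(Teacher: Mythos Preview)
Your proposal is correct, and for \eqref{E4.22} and \eqref{E4.23} it matches the paper's argument essentially verbatim (uniform $W(0,T)$-bound via Remark~\ref{R4.1}, then Aubin--Lions plus a diagonal extraction).

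The one genuine difference is in how you obtain the uniform $L^\infty(\I;L^2(\Omega))\cap L^2(\I;H^1(\Omega))$ bound. You mimic Lemma~\ref{L4.2}: split time at $T_{a,f}$, absorb the bad term on $[T_{a,f},\infty)$ via the $H^1\hookrightarrow L^4$ embedding and the smallness from Theorem~\ref{T4.3}, and run a backward Gronwall on $[0,T_{a,f}]$. This works, but it is more than is needed here. The paper observes that, since Lemma~\ref{L4.3} already gives $\|\bar\varphi_{T_k}\|_{L^2(Q)}\le C_\varphi$ uniformly, the bad contribution can be bounded in one stroke:
\[
\int_t^{T_k}\!\!\int_{\Omega_s^{m_f,M_f}}\Big|\frac{\partial f}{\partial y}(x,s,\bar y_{T_k})\Big|\bar\varphi_{T_k}^2\,dx\,ds\ \le\ C_{M_f}\,\|\bar\varphi_{T_k}\|_{L^2(Q)}^2\ \le\ C_{M_f}\,C_\varphi^2,
\]
with no need to split in time, absorb, or invoke Gronwall. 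You in fact already call on Lemma~\ref{L4.3} to bound the right-hand side, so the same lemma handles the bad term just as cheaply. Your route has the advantage of being self-contained in the sense that it reproduces the mechanism behind Lemma~\ref{L4.2}; the paper's route is shorter because it leverages the $L^2(Q)$ a~priori bound that Lemma~\ref{L4.3} has already supplied.
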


\begin{proof}
Taking $\Omega_t^{m_f,M_f}$ as in the proof of Lemma \ref{L4.2}, from the adjoint state equation satisfied by $\bar\varphi_{T_k}$ and \eqref{E2.6} it follows for every  $t \in (0,T)$ with $T > 0$ arbitrary
\begin{align*}
&\frac{1}{2}\|\bar\varphi_{T_k}(t)\|_{L^2(\Omega)}^2 + C_a^2\int_t^T\|\bar\varphi_{T_k}\|^2_{H^1(\Omega)}\ds\\
&\le \frac{1}{2}\|\bar\varphi_{T_k}(t)\|_{L^2(\Omega)}^2 + \int_t^T\int_\Omega[|\nabla\bar\varphi_{T_k}|^2 + a\bar\varphi_{T_k}^2]\dx\ds + \int_t^T\int_\Omega \frac{\partial f}{\partial y}(x,t,\bar y_{T_k})\bar\varphi_{T_k}^2\dx\ds\\
& - \int_t^T\int_{\Omega_t^{m_f,M_f}}\frac{\partial f}{\partial y}(x,t,\bar y_{T_k})\bar\varphi_{T_k}^2\dx\ds\\
&\le \Big(2\|\bar y_{T_k}\|_{L^2(Q)} + \|y_d\|_{L^2(Q)} + \|\bar y\|_{L^2(Q)}\Big)\|\bar\varphi_{T_k}\|_{L^2(Q)}\\
& + \int_t^T\int_{\Omega_t^{m_f,M_f}}\big|\frac{\partial f}{\partial y}(x,t,\bar y_{T_k})\big|\bar\varphi_{T_k}^2\dx\ds.
\end{align*}
The last two terms are bounded by a constant independent of $k$. Indeed, for the first term this boundedness follows from the boundedness of $\{\bar y_{T_k}\}_{k = 1}^\infty$, see \eqref{E4.1}, and Lemma \ref{L4.3}. The boundedness of the second term is a consequence of \eqref{E4.13} and again Lemma \ref{L4.3}. Since $T > 0$ and $t \in (0,T)$ were arbitrary, and $\bar\varphi_{T_k}(x,t) = 0$ for $t > T_k$, the above inequalities imply that $\{\bar\varphi_{T_k}\}_{k =1}^\infty$ is bounded in $L^\infty(\I;L^2(\Omega))$. Additionally, taking $t \to 0$, we also infer that $\{\bar\varphi_{T_k}\}_{k =1}^\infty$ is bounded in $L^2(\I;H^1(\Omega))$. These boundedness and the convergence $\bar\varphi_{T_k} \rightharpoonup \bar\varphi$ in $L^2(Q)$ yield \eqref{E4.21}.

Using \eqref{E2.5} and the fact that the functions $\bar y_{T_k}$ are uniformly bounded with respect to $k$ in $L^\infty(Q_T)$ for every $T$, it follows from \eqref{E3.1} that $\{\bar\varphi_{T_k}\}_{k = 1}^\infty$ is bounded  in $W(0,T)$ by a constant $C_T$. Then, \eqref{E4.22} follows from \eqref{E4.21}.

Since the embedding $W(0,T) \subset L^2(Q_T)$ is compact, we have that $\bar\varphi_{T_k} \to \bar\varphi$ strongly in $L^2(Q_T)$ for every $T < \infty$. Then, we can extract a subsequence of $\{\bar\varphi_{T_k}\}_{k = 1}^\infty$, denoted by $\{\bar\varphi_{1,j}\}_{j = 1}^\infty$ such that $\|\bar\varphi_{1,j}(t)\|_{L^2(\Omega)} \to \|\bar\varphi(t)\|_{L^2(\Omega)}$ for almost all $t \in (0,1)$. In a second step, a further subsequence of $\{\bar\varphi_{1,j}\}_{j = 1}^\infty$, denoted by $\{\bar\varphi_{2,j}\}_{j = 1}^\infty$, is taken such that the pointwise convergence in time holds almost everywhere in $(0,2)$. Proceeding in this way we obtain for every $i$ a subsequence $\{\bar\varphi_{i,j}\}_{j = 1}^\infty$ such that $\|\bar\varphi_{i,j}(t)\|_{L^2(\Omega)} \to \|\bar\varphi(t)\|_{L^2(\Omega)}$ for almost all $t \in (0,i)$ when $j \to \infty$. Hence, the choice $\{\bar\varphi_{i,i}\}_{i = 1}^\infty$ satisfies \eqref{E4.23}.
\end{proof}

\begin{remark}
Let us observe that the convergence $\bar\varphi_{T_k} \rightharpoonup \bar\varphi$ in $W(0,T)$ implies that $\bar\varphi_{T_k}(t) \rightharpoonup \bar\varphi(t)$ in $L^2(\Omega)$ for every $t \in [0,T]$. Indeed, given $t \in [0,T]$, from the continuous embedding $W(0,T) \subset C([0,T];L^2(\Omega))$ we infer the continuity of the mapping $z \in W(0,T) \hookrightarrow z(t) \in L^2(\Omega)$. Therefore, if $z_k \rightharpoonup z$ in $W(0,T)$, then $z_k(t) \rightharpoonup z(t)$ in $L^2(\Omega)$.
\label{R4.2}
\end{remark}

\begin{lemma}
For every $\varepsilon > 0$ there exists $T_\varepsilon \in (\I)$ such that
\begin{equation}
\left(\|\bar\varphi_{T_k}(t)\|_{L^2(\Omega)}^2 + \int_{T_\varepsilon}^\infty\|\bar\varphi_{T_k}\|^2_{H^1(\Omega)}\dt\right)^{1/2} < \varepsilon\ \ \forall T_k > T_\varepsilon \text{ and } \forall t > T_\varepsilon.
\label{E4.24}
\end{equation}
\label{L4.5}
\end{lemma}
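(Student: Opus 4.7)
The plan is to test the adjoint equation \eqref{E3.1} against $\bar\varphi_{T_k}$ itself on the interval $(t,T_k)$. Thanks to the terminal condition $\bar\varphi_{T_k}(T_k)=0$, and since $\bar\varphi_{T_k}\equiv 0$ on $(T_k,\infty)$, this yields the energy identity
\[
\tfrac{1}{2}\|\bar\varphi_{T_k}(t)\|^2_{L^2(\Omega)} + \int_t^{T_k}\!\!\int_\Omega\Bigl[|\nabla\bar\varphi_{T_k}|^2 + a\bar\varphi_{T_k}^2 + \tfrac{\partial f}{\partial y}(x,s,\bar y_{T_k})\bar\varphi_{T_k}^2\Bigr]\dx\ds = \int_t^{T_k}\!\!\int_\Omega(2\bar y_{T_k} - y_d - \bar y)\bar\varphi_{T_k}\dx\ds.
\]
Combined with the coercivity \eqref{E2.6}, what remains is to handle the $\tfrac{\partial f}{\partial y}$-term and then to make the right-hand side small uniformly in $k$ as $t\to\infty$.

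For the $\tfrac{\partial f}{\partial y}$-term I would reuse, verbatim, the splitting employed in the proof of Lemma \ref{L4.2}: outside the set $\Omega^{m_f,M_f}_s$ hypotheses \eqref{E2.3} and \eqref{E4.10} ensure $\tfrac{\partial f}{\partial y}\ge 0$, while on $\Omega^{m_f,M_f}_s$ the pointwise bound \eqref{E4.13} together with $|\bar y_{T_k}|\ge m_f$ and the $H^1\hookrightarrow L^4$ embedding gives
\[
\int_t^{T_k}\!\!\int_{\Omega^{m_f,M_f}_s}\!\!\Bigl|\tfrac{\partial f}{\partial y}(x,s,\bar y_{T_k})\Bigr|\bar\varphi_{T_k}^2\dx\ds \le \tfrac{C_{M_f}C_4^2}{m_f}\int_t^\infty\|\bar y_{T_k}(s)\|_{L^2(\Omega)}\|\bar\varphi_{T_k}\|^2_{H^1(\Omega)}\ds.
\]
Invoking Theorem \ref{T4.3} with the threshold $\varepsilon=\tfrac{m_fC_a^2}{2C_4^2C_{M_f}}$ (producing the same $T_{a,f}$ as in Lemma \ref{L4.2}), this bad term is absorbed into $\tfrac{C_a^2}{2}\int_t^\infty\|\bar\varphi_{T_k}\|^2_{H^1(\Omega)}\ds$ as soon as $t>T_{a,f}$ and $T_k>T_{a,f}$. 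Applying Young's inequality to the inner product on the right-hand side of the energy identity then produces, for every such $t$,
\[
\tfrac{1}{2}\|\bar\varphi_{T_k}(t)\|^2_{L^2(\Omega)} + \tfrac{C_a^2}{4}\int_t^\infty\|\bar\varphi_{T_k}\|^2_{H^1(\Omega)}\ds \le \tfrac{1}{C_a^2}\|2\bar y_{T_k} - y_d - \bar y\|^2_{L^2(t,\infty;L^2(\Omega))}.
\]

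To finish, I would show that the right-hand side is arbitrarily small for $T_\varepsilon>T_{a,f}$ and $T_k>T_\varepsilon$ sufficiently large. Writing $2\bar y_{T_k} - y_d - \bar y = 2(\bar y_{T_k}-\bar y) + (\bar y - y_d)$ and using $\bar y_{T_k}\equiv 0$ on $(T_k,\infty)$, a triangle-inequality bound gives
\[
\|2\bar y_{T_k} - y_d - \bar y\|_{L^2(T_\varepsilon,\infty;L^2(\Omega))} \le 2\|\bar y_{T_k} - \bar y\|_{L^2(Q)} + \|\bar y\|_{L^2(T_\varepsilon,\infty;L^2(\Omega))} + \|y_d\|_{L^2(T_\varepsilon,\infty;L^2(\Omega))},
\]
and since $\bar y,y_d\in L^2(Q)$, the last two tails vanish as $T_\varepsilon\to\infty$, while \eqref{E4.1} makes the first term arbitrarily small once $T_k$ is large. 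Taking $t=T_\varepsilon$ in the coercive estimate yields the desired bound on $\int_{T_\varepsilon}^\infty\|\bar\varphi_{T_k}\|^2_{H^1(\Omega)}\dt$; using the same estimate at an arbitrary $t>T_\varepsilon$, and noting that the right-hand side is monotone nonincreasing in $t$, yields the pointwise bound on $\|\bar\varphi_{T_k}(t)\|_{L^2(\Omega)}$. Summing the two produces \eqref{E4.24}.

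The main obstacle is, as in Lemma \ref{L4.2}, the indefinite sign of $\tfrac{\partial f}{\partial y}$ on the intermediate layer $\{m_f\le|y|\le M_f\}$: the argument hinges on the interplay between the Theorem \ref{T4.3} smallness of $\bar y_{T_k}(t)$ in $L^2(\Omega)$ for both $t$ and $T_k$ large, the $H^1\hookrightarrow L^4$ embedding, and the coercivity \eqref{E2.6}. Once this absorption step is in place, the remainder is a standard backward-parabolic tail estimate.
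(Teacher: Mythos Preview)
Your proposal is correct and follows essentially the same route as the paper: test the adjoint equation on $(t,T_k)$, split the $\frac{\partial f}{\partial y}$-term via $\Omega_s^{m_f,M_f}$, absorb the bad layer using Theorem~\ref{T4.3} and the $H^1\hookrightarrow L^4$ embedding exactly as in \eqref{E4.18}, and then make the tail of the source $2\bar y_{T_k}-y_d-\bar y$ small via \eqref{E4.1} and the fact that $\bar y-y_d\in L^2(Q)$. The only minor difference is that the paper bounds the source term directly by $\|2\bar y_{T_k}-y_d-\bar y\|_{L^2(T_\varepsilon,\infty;L^2(\Omega))}\,\|\bar\varphi_{T_k}\|_{L^2(Q)}$ and invokes the uniform bound $C_\varphi$ from Lemma~\ref{L4.3}, whereas you use Young's inequality and absorb an additional $\frac{C_a^2}{4}\int_t^\infty\|\bar\varphi_{T_k}\|_{H^1}^2$; both variants work and are equivalent in spirit.
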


\begin{proof}
Given $\varepsilon > 0$, Theorem \ref{T4.3} yields the existence of $T_\varepsilon \in (\I)$  such that
\begin{equation}
\|\bar y_{T}(t)\|_{L^2(\Omega)} < \varepsilon^2\quad \forall t > T_\varepsilon\text{ and } \forall T > T_\varepsilon.
\label{E4.25}
\end{equation}
Moreover, since $\bar y_T \to \bar y$ in $L^2(Q)$ and $\bar y - y_d \in L^2(Q)$, for $T_\varepsilon$ sufficiently large we get
\[
\|\bar y_T - \bar y\|_{L^2(Q)} < \frac{\min\{1,C_a^2\}}{4C_\varphi}\varepsilon^2 \ \ \forall T > T_\varepsilon\ \text{ and }\ \|\bar y - y_d\|_{L^2(T_\varepsilon,\infty;L^2(\Omega))} < \frac{\min\{1,C_a^2\}}{4C_\varphi}\varepsilon^2,
\]
where $C_\varphi$ is the constant appearing in \eqref{E4.19}. From these estimates we infer
\begin{align}
&\|2\bar y_T - y_d - \bar y\|_{L^2(T_\varepsilon,\infty;L^2(\Omega))}\notag\\
& \le \|\bar y_T - \bar y\|_{L^2(T_\varepsilon,\infty;L^2(\Omega))} + \|\bar y - y_d\|_{L^2(T_\varepsilon,\infty;L^2(\Omega))} < \frac{\min\{1,C_a^2\}}{2C_\varphi}\varepsilon^2.
\label{E4.26}
\end{align}

Now, taking $T_\varepsilon < t < T_k$ and  proceeding similarly as in the proof of Lemma \ref{L4.4} we get with \eqref{E4.19} and \eqref{E4.26}
\begin{align}
&\frac{1}{2}\|\bar\varphi_{T_k}(t)\|_{L^2(\Omega)}^2 + C_a^2\int_t^{T_k}\|\bar\varphi_{T_k}\|^2_{H^1(\Omega)}\ds\notag\\
&\le \|2\bar y_{T_k} - y_d - \bar y\|_{L^2(T_\varepsilon,\infty;L^2(\Omega))}\|\bar\varphi_{T_k}\|_{L^2(Q)} + \int_t^{T_k}\int_{\Omega_t^{m_f,M_f}}\big|\frac{\partial f}{\partial y}(x,t,\bar y_{T_k})\big|\bar\varphi_{T_k}^2\dx\ds\notag\\
& <\frac{\min\{1,C_a^2\}}{2}\varepsilon^2 + \int_t^{T_k}\int_{\Omega_t^{m_f,M_f}}\big|\frac{\partial f}{\partial y}(x,t,\bar y_{T_k})\big|\bar\varphi_{T_k}^2\dx\ds.\label{E4.27}
\end{align}
To estimate the last term  we use \eqref{E4.25} and argue as in \eqref{E4.18} to deduce
\begin{align}
&\int_t^{T_k}\int_{\Omega_t^{m_f,M_f}} \big|\frac{\partial f}{\partial y}(x,t,\bar y_{T_k})\big|\bar\varphi_{T_k}^2\dx\dt \le \frac{C_{M_f}}{m_f}\int_t^{T_k}\int_{\Omega_t^{m_f,M_f}} |\bar y_T|\bar\varphi_{T_k}^2\dx\dt\notag\\
&\le \frac{C_{M_f}}{m_f}\int_t^{T_k}\|\bar y_{T_k}\|_{L^2(\Omega)}\|\bar\varphi_{T_k}\|^2_{L^4(\Omega)}\dt \le \frac{C_{M_f}C_4^2}{m_f}\int_t^{T_k}\|\bar y_{T_k}\|_{L^2(\Omega)}\|\bar\varphi_{T_k}\|^2_{H^1(\Omega)}\dt\notag\\
&\le \frac{C_{M_f}C_4^2}{m_f}\varepsilon^2\int_t^{T_k}\|\bar\varphi_{T_k}\|^2_{H^1(\Omega)}\dt. \label{E4.28}
\end{align}
Without loss of generality we can assume that $\frac{C_{M_f}C_4^2}{m_f}\varepsilon^2 \le \frac{\min\{1,C_a^2\}}{2}$.  Then, \eqref{E4.23} follows from \eqref{E4.27} and \eqref{E4.28}.
\end{proof}

As a consequence of this lemma we infer the following corollary.

\begin{corollary}
The following convergence holds
\begin{equation}
\lim_{t \to \infty}\|\bar\varphi(t)\|_{L^2(\Omega)} = 0.
\label{E4.29}
\end{equation}
\label{C4.1}
\end{corollary}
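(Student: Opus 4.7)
The plan is to combine Lemma \ref{L4.5} (uniform smallness of $\|\bar\varphi_{T_k}(t)\|_{L^2(\Omega)}$ past some $T_\varepsilon$, uniformly in $k$) with the pointwise a.e.\ convergence in \eqref{E4.23} and the continuity in time of $\bar\varphi$.

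First, I would fix $\varepsilon > 0$ and invoke Lemma \ref{L4.5} to obtain $T_\varepsilon \in (0,\infty)$ such that for every $T_k > T_\varepsilon$ and every $t > T_\varepsilon$,
\[
\|\bar\varphi_{T_k}(t)\|_{L^2(\Omega)} < \varepsilon.
\]
Next, I would pick any $t > T_\varepsilon$ belonging to the (full-measure) set on which the pointwise convergence \eqref{E4.23} holds, and let $k \to \infty$ in the inequality above. This yields
\[
\|\bar\varphi(t)\|_{L^2(\Omega)} \le \varepsilon \quad \text{for a.a. } t > T_\varepsilon.
\]

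To remove the "almost everywhere" and obtain the pointwise statement needed for the limit, I would use that $\bar\varphi \in W(0,T)$ for every $T > 0$ by \eqref{E4.22}. Since $W(0,T) \hookrightarrow C([0,T];L^2(\Omega))$ continuously, we have $\bar\varphi \in C([0,\infty);L^2(\Omega))$, so the map $t \mapsto \|\bar\varphi(t)\|_{L^2(\Omega)}$ is continuous on $[0,\infty)$. Hence the almost-everywhere inequality propagates to every $t > T_\varepsilon$, giving $\|\bar\varphi(t)\|_{L^2(\Omega)} \le \varepsilon$ for all such $t$. Since $\varepsilon$ was arbitrary, \eqref{E4.29} follows.

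The only non-routine ingredient is the passage from "a.e.\ $t > T_\varepsilon$" to "every $t > T_\varepsilon$". I do not anticipate a genuine obstacle here because the $W(0,T)$-regularity of $\bar\varphi$ is already in hand from Lemma \ref{L4.4}; the argument is essentially bookkeeping, provided one is careful that the exceptional null set in \eqref{E4.23} is harmless once continuity of $\|\bar\varphi(\cdot)\|_{L^2(\Omega)}$ is invoked.
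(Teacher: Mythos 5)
Your argument is correct and is essentially identical to the paper's proof: both invoke Lemma \ref{L4.5} for the uniform-in-$k$ smallness past $T_\varepsilon$, pass to the limit along the a.e.\ convergence \eqref{E4.23}, and then use the continuity of $t \mapsto \bar\varphi(t)$ in $L^2(\Omega)$ to upgrade the a.e.\ bound to all $t > T_\varepsilon$. No discrepancies to report.
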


\begin{proof}
Given $\varepsilon > 0$, we obtain with Lemma \ref{L4.5} the existence of $T_\varepsilon > 0$ such that
\[
\|\bar\varphi_{T_k}(t)\|_{L^2(\Omega)} < \varepsilon\ \ \forall t > T_\varepsilon\text{ and } \forall T_k > T_\varepsilon.
\]
From this inequality and \eqref{E4.23}, we have that
\[
\|\bar\varphi(t)\|_{L^2(\Omega)} = \lim_{k \to \infty}\|\bar\varphi_{T_k}(t)\|_{L^2(\Omega)} \le \varepsilon\ \text{ for a.a. } t > T_\varepsilon.
\]
Since $\bar\varphi:(\I) \longrightarrow L^2(\Omega)$ is continuous, the above inequality implies $\|\bar\varphi(t)\|_{L^2(\Omega)} \le \varepsilon$ for every $t > T_\varepsilon$, which proves the corollary.
\end{proof}

\begin{lemma}
For every $t \ge 0$ the following identity holds
\begin{align}
\frac{1}{2}\|\bar\varphi(t)\|_{L^2(\Omega)} &+ \int_t^\infty\int_\Omega[|\nabla\bar\varphi|^2 + a\bar\varphi^2]\dx\ds + \int_t^\infty\int_\Omega\frac{\partial f}{\partial y}(x,t,\bar y)\bar\varphi^2\dx\dt\notag\\
& = \int_t^\infty\int_\Omega(\bar y - y_d)\bar\varphi\dx\dt. \label{E4.30}
\end{align}
\label{L4.6}
\end{lemma}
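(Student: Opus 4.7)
The strategy is to first derive the limiting PDE satisfied by $\bar\varphi$, then apply an energy identity on a finite interval $[t,T']$, and finally let $T'\to\infty$ invoking Corollary \ref{C4.1}. I interpret the first term of \eqref{E4.30} as $\frac{1}{2}\|\bar\varphi(t)\|^2_{L^2(\Omega)}$.

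First I would pass to the distributional limit in the adjoint equation \eqref{E3.1} for $\bar\varphi_{T_k}$ on $Q_{T'}$ with arbitrary $T' > 0$. The linear terms handle themselves via the weak convergences \eqref{E4.21}--\eqref{E4.22}. For the coefficient term $\frac{\partial f}{\partial y}(\cdot,\cdot,\bar y_{T_k})\bar\varphi_{T_k}$, combine the uniform $L^\infty(Q_{T'})$ bound from Remark \ref{R4.1} with a.e.\ convergence $\bar y_{T_k} \to \bar y$ (from \eqref{E4.1} after extraction of a subsequence) and dominated convergence, so that $\frac{\partial f}{\partial y}(\cdot,\cdot,\bar y_{T_k}) \to \frac{\partial f}{\partial y}(\cdot,\cdot,\bar y)$ strongly in $L^p(Q_{T'})$ for every $p<\infty$; pairing with the weak $L^2(Q)$ convergence of $\bar\varphi_{T_k}$ identifies the limit. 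The source terms satisfy $2\bar y_{T_k} - y_d - \bar y \to \bar y - y_d$ strongly in $L^2(Q)$ by \eqref{E4.1}. This establishes
\begin{equation*}
-\frac{\partial\bar\varphi}{\partial t} - \Delta\bar\varphi + a\bar\varphi + \frac{\partial f}{\partial y}(x,t,\bar y)\bar\varphi = \bar y - y_d \text{ in } Q,\quad \partial_n\bar\varphi = 0 \text{ on } \Sigma.
\end{equation*}
Since $\frac{\partial f}{\partial y}(\cdot,\cdot,\bar y)$ is bounded on $Q_{T'}$ and the right-hand side lies in $L^2(Q_{T'})$, the equation gives $\partial_t\bar\varphi \in L^2(0,T';H^1(\Omega)^*)$, so $\bar\varphi\in W(0,T')\subset C([0,T'];L^2(\Omega))$ for every $T'$; in particular $\bar\varphi(t)\in L^2(\Omega)$ is defined for every $t\ge 0$.

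Next I would test this PDE against $\bar\varphi$ over $\Omega\times[t,T']$ (standard integration by parts in time thanks to the regularity just obtained) to get
\begin{equation*}
\tfrac{1}{2}\|\bar\varphi(t)\|^2_{L^2(\Omega)} - \tfrac{1}{2}\|\bar\varphi(T')\|^2_{L^2(\Omega)} + \int_t^{T'}\!\!\int_\Omega\bigl[|\nabla\bar\varphi|^2 + a\bar\varphi^2 + \tfrac{\partial f}{\partial y}(x,s,\bar y)\bar\varphi^2\bigr]\dx\ds = \int_t^{T'}\!\!\int_\Omega(\bar y - y_d)\bar\varphi\dx\ds.
\end{equation*}

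Finally I would send $T'\to\infty$. The boundary term vanishes by Corollary \ref{C4.1}; the gradient and $a\bar\varphi^2$ integrals converge by monotone convergence to finite limits since $\bar\varphi\in L^2(\I;H^1(\Omega))$ via \eqref{E4.21}; the source term converges by Cauchy--Schwarz as $\bar y-y_d,\bar\varphi\in L^2(Q)$. The delicate term is $\frac{\partial f}{\partial y}(\cdot,\cdot,\bar y)\bar\varphi^2$, whose sign is not fixed. By \eqref{E2.3} and \eqref{E4.10} it is nonnegative outside the set $\{m_f\le |\bar y|\le M_f\}$, whereas \eqref{E2.4} bounds it by $C_{M_f}$ on that set. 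Thus its negative part is pointwise dominated by $C_{M_f}\bar\varphi^2\in L^1(Q)$, so its integral converges; the positive part is then forced to have a finite limit because every other quantity in the identity does, and \eqref{E4.30} follows.

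The main obstacle is the identification step: securing strong $L^2(Q_{T'})$ convergence of $\frac{\partial f}{\partial y}(\cdot,\cdot,\bar y_{T_k})$ to match it against the merely weakly convergent $\bar\varphi_{T_k}$, together with the sign analysis of $\frac{\partial f}{\partial y}(\cdot,\cdot,\bar y)\bar\varphi^2$ required to justify the limit $T'\to\infty$.
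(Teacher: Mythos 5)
Your proof is correct and follows the same skeleton as the paper's: pass to the limit in the adjoint equation to obtain the weak formulation \eqref{E4.31} for $\bar\varphi$ on each finite time interval, test with $z=\bar\varphi$, and let the upper limit tend to $\infty$ using \eqref{E4.29}. The genuine difference lies in how the term $\int\frac{\partial f}{\partial y}(\cdot,\cdot,\bar y_{T_k})\bar\varphi_{T_k}z$ is passed to the limit and how $\frac{\partial f}{\partial y}(\cdot,\cdot,\bar y)\bar\varphi^2$ is shown to be integrable on all of $Q$. The paper devotes a separate step to proving that $\big|\frac{\partial f}{\partial y}(\cdot,\cdot,\bar y_{T_k})\big|^{1/2}\bar\varphi_{T_k}$ is bounded in $L^2(Q)$ globally in time (via an energy estimate on the adjoint equation exploiting \eqref{E2.3}, \eqref{E4.10}) and converges weakly to $\big|\frac{\partial f}{\partial y}(\cdot,\cdot,\bar y)\big|^{1/2}\bar\varphi$; this square-root device simultaneously identifies the limit of the nonlinear term and yields $\frac{\partial f}{\partial y}(\cdot,\cdot,\bar y)\bar\varphi^2\in L^1(Q)$ by weak lower semicontinuity. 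You instead work on finite intervals, where Remark \ref{R4.1} gives a uniform $L^\infty(Q_{T'})$ bound on the coefficient, so that a.e.\ convergence plus dominated convergence paired with the weak convergence of $\bar\varphi_{T_k}$ suffices --- a more elementary identification --- and you recover the global integrability afterwards from the sign decomposition: the negative part of $\frac{\partial f}{\partial y}(\cdot,\cdot,\bar y)$ is supported on $\{m_f\le|\bar y|\le M_f\}$ and bounded there by $C_{M_f}$, so its contribution is dominated by $C_{M_f}\bar\varphi^2\in L^1(Q)$, while the positive part's integral is monotone in $T'$ and forced to converge by the identity itself. Both arguments are sound; yours avoids the auxiliary weak-convergence step at the cost of the final bookkeeping, whereas the paper's Step 1 is reused later (e.g.\ in Lemma \ref{L4.7}), so it is not wasted effort there. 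One cosmetic point: the first term in \eqref{E4.30} should indeed be read as $\frac12\|\bar\varphi(t)\|_{L^2(\Omega)}^2$, as you assumed.
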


\begin{proof}
We split the proof into two steps.

{\em Step 1 - $\big|\frac{\partial f}{\partial y}(\cdot,\cdot,\bar y_{T_k})\big|^{1/2}\bar\varphi_{T_k} \rightharpoonup \big|\frac{\partial f}{\partial y}(\cdot,\cdot,\bar y)\big|^{1/2}\bar\varphi$ in $L^2(Q)$.} Let us first prove the boundedness of $\big\{|\frac{\partial f}{\partial y}(\cdot,\cdot,\bar y_{T_k})\big|^{1/2}\bar\varphi_{T_k}\}_{k = 1}^\infty$ in $L^2(Q)$. We use \eqref{E3.1}, \eqref{E2.4}, and \eqref{E4.10} to get
\begin{align*}
&\int_{Q}\big|\frac{\partial f}{\partial y}(\cdot,\cdot,\bar y_{T_k})\big|\bar\varphi_{T_k}^2\dx\dt = \int_{Q_{T_k}}\big|\frac{\partial f}{\partial y}(\cdot,\cdot,\bar y_{T_k})\big|\bar\varphi_{T_k}^2\dx\dt\\
&= \int_0^{T_k}\int_{\Omega \setminus \Omega_t^{m_f,M_f}}\frac{\partial f}{\partial y}(\cdot,\cdot,\bar y_{T_k})\bar\varphi_{T_k}^2\dx\dt + \int_0^{T_k}\int_{\Omega_t^{m_f,M_f}}\big|\frac{\partial f}{\partial y}(\cdot,\cdot,\bar y_{T_k})\big|\bar\varphi_{T_k}^2\dx\dt\\
&\le \int_{Q_{T_k}}(2\bar y_{T_k} - y_d - \bar y)\bar\varphi_{T_k}\dx\dt +2\int_0^{T_k}\int_{\Omega_t^{m_f,M_f}}\big|\frac{\partial f}{\partial y}(\cdot,\cdot,\bar y_{T_k})\big|\bar\varphi_{T_k}^2\dx\dt\\
&\|2\bar y_{T_k} - y_d - \bar y\|_{L^2(Q)}\|\bar\varphi_{T_k}\|_{L^2(Q)} + C_{M_f}\|\bar\varphi_{T_k}\|^{2}_{L^2(Q)}.
\end{align*}
Using the boundedness of $\{\bar\varphi_{T_k}\}_{k=1}^\infty$ and $\{\bar y_{T_k}\}_{k=1}^\infty$ in $L^2(Q)$ we deduce the desired boundedness. Then, there exist subsequences, denoting in the same way, such that $\big|\frac{\partial f}{\partial y}(\cdot,\cdot,\bar y_{T_k})\big|^{1/2}\bar\varphi_{T_k} \rightharpoonup \psi$ in $L^2(Q)$. From \eqref{E4.22} and the compactness of the embedding $W(0,T) \subset L^2(Q_T)$ we get the strong convergence $\bar\varphi_{T_k} \to \bar\varphi$ in $L^2(Q_T)$ for every $T < \infty$. Then, taking subsequences we can assume that $\bar y_{T_k}(x,t) \to \bar y(x,t)$ and $\bar\varphi_{T_k}(x,t) \to \bar\varphi(x,t)$ for almost all $(x,t) \in Q_T$. This implies that
\[
\big|\frac{\partial f}{\partial y}(x,t,\bar y_{T_k}(x,t))\big|^{1/2}\bar\varphi_{T_k}(x,t) \to \big|\frac{\partial f}{\partial y}(x,t,\bar y(x,t))\big|^{1/2}\bar\varphi(x,t) \text{ for a.a. } (x,t) \in Q_T,
\]
and, consequently $\psi(x,t) = \big|\frac{\partial f}{\partial y}(x,t,\bar y(x,t))\big|^{1/2}\bar\varphi(x,t)$ in $Q_T$. As $T$ was arbitrary we infer that $\big|\frac{\partial f}{\partial y}(\cdot,\cdot,\bar y)\big|^{1/2}\bar\varphi = \psi$. Since all subsequences have the same limit, the whole sequence converges to the claimed limit.

{\em Step 2 - Proof of \eqref{E4.30}.} Given $z \in L^2(\IT;H^1(\Omega))$, we deduce from \eqref{E3.1}
\begin{align*}
&-\int_t^T\langle\frac{\partial\bar\varphi_{T_k}}{\partial t},z\rangle\ds + \int_t^T\int_\Omega[\nabla\bar\varphi_{T_k}\nabla z + a\bar\varphi_{T_k}z]\dx\ds\\
&  + \int_t^T\int_\Omega\frac{\partial f}{\partial y}(x,t,\bar y_{T_k})\bar\varphi_{T_k}z\dx\ds = \int_t^T\int_\Omega(2\bar y_{T_k} - y_d - \bar y)z\dx\ds
\end{align*}
for every $T > 0$, $T_k > T$, and $t \in [0,T)$. Using \eqref{E4.1}, \eqref{E4.21}, and \eqref{E4.22} we can pass to the limit in the above identity and obtain
\begin{align}
&-\int_t^T\langle\frac{\partial\bar\varphi}{\partial t},z\rangle\ds + \int_t^T\int_\Omega[\nabla\bar\varphi\nabla z + a\bar\varphi z]\dx\ds + \int_t^T\int_\Omega\frac{\partial f}{\partial y}(x,t,\bar y)\bar\varphi z\dx\ds\notag\\
& = \int_t^T\int_\Omega(\bar y - y_d)z\dx\ds\quad \forall t \in [0,T]. \label{E4.31}
\end{align}
The only limit which is not obvious is
\[
\lim_{k \to \infty}\int_t^T\int_\Omega\frac{\partial f}{\partial y}(x,t,\bar y_{T_k})\bar\varphi_{T_k}z\dx\ds = \int_t^T\int_\Omega\frac{\partial f}{\partial y}(x,t,\bar y)\bar\varphi z\dx\ds.
\]
It follows from Step 1 and the fact that
\[
\big|\frac{\partial f}{\partial y}(\cdot,\cdot,\bar y_{T_k})\big|^{1/2}\sign{\frac{\partial f}{\partial y}(\cdot,\cdot,\bar y_{T_k})}z \to \big|\frac{\partial f}{\partial y}(\cdot,\cdot,\bar y)\big|^{1/2}\sign{\frac{\partial f}{\partial y}(\cdot,\cdot,\bar y)}z \text{ in }L^2(Q).
\]
This last convergence can be easily deduced taking into account Lebesgue's dominated convergence theorem and Remark \ref{R4.1}. Now, taking $z = \bar\varphi$ in \eqref{E4.31} and recalling that $\bar\varphi \in W(0,T)$ for every $T < \infty$, we get
\begin{align*}
\frac{1}{2}\|\bar\varphi(t)\|^2_{L^2(\Omega)} &+ \int_t^T\int_\Omega[|\nabla\bar\varphi|^2 + a\bar\varphi^2]\dx\ds + \int_t^T\int_\Omega\frac{\partial f}{\partial y}(x,t,\bar y)\bar\varphi^2\dx\dt\notag\\
& = \int_t^T\int_\Omega(\bar y - y_d)\bar\varphi\dx\dt + \frac{1}{2}\|\bar\varphi(T)\|^2_{L^2(\Omega)}.
\end{align*}
Finally, taking $T \to \infty$ and using \eqref{E4.29} identity \eqref{E4.30} follows.
\end{proof}

\begin{lemma}
Strong convergence $\bar\varphi_{T_k} \to \bar\varphi$ in $L^2(\I;H^1(\Omega))$ as $k \to \infty$ holds.
\label{L4.7}
\end{lemma}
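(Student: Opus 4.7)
The plan is to upgrade the weak convergence $\bar\varphi_{T_k}\rightharpoonup\bar\varphi$ in $L^2(\I;H^1(\Omega))$ from Lemma~\ref{L4.4} to strong convergence by proving convergence of norms. Since by \eqref{E2.6} the quantity $\|y\|_\#^2:=\int_\Omega[|\nabla y|^2+ay^2]\dx$ defines a norm on $H^1(\Omega)$ equivalent to the standard one, it suffices to verify the single identity $\int_Q[|\nabla\bar\varphi_{T_k}|^2+a\bar\varphi_{T_k}^2]\dx\dt\to\int_Q[|\nabla\bar\varphi|^2+a\bar\varphi^2]\dx\dt$; the Hilbert-space argument then yields strong convergence of $\bar\varphi_{T_k}$ to $\bar\varphi$ in $L^2(\I;H^1(\Omega))$.

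The first step is to test \eqref{E3.1} with $\bar\varphi_{T_k}$ and integrate over $(\IT_k)$; using $\bar\varphi_{T_k}(T_k)=0$ yields the energy identity
\[
\tfrac{1}{2}\|\bar\varphi_{T_k}(0)\|^2_{L^2(\Omega)}+\int_Q[|\nabla\bar\varphi_{T_k}|^2+a\bar\varphi_{T_k}^2]\dx\dt+\int_Q\tfrac{\partial f}{\partial y}(x,t,\bar y_{T_k})\bar\varphi_{T_k}^2\dx\dt=\int_Q(2\bar y_{T_k}-y_d-\bar y)\bar\varphi_{T_k}\dx\dt.
\]
By Theorem~\ref{T4.1} (strong convergence $\bar y_{T_k}\to\bar y$ in $L^2(Q)$) and the weak convergence of $\bar\varphi_{T_k}$ the right-hand side converges to $\int_Q(\bar y-y_d)\bar\varphi\dx\dt$, which by \eqref{E4.30} at $t=0$ equals the sum of the three limit terms on the left. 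Hence the three terms on the left must sum to this limit, and to extract convergence of the middle term it is enough to show that the other two converge (not merely are lower semicontinuous).

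The initial-value term is weakly lower semicontinuous by Remark~\ref{R4.2}, and the gradient term is weakly lower semicontinuous by definition. For the sign-indefinite term I split $\partial f/\partial y=\alpha-\beta$ with $\alpha,\beta\ge 0$; by \eqref{E2.3} and \eqref{E4.10}, the negative part $\beta_k:=(\partial f/\partial y)^-(\cdot,\cdot,\bar y_{T_k})$ is bounded by $C_{M_f}$ and supported in $\Omega_t^{m_f,M_f}$. Step~1 of the proof of Lemma~\ref{L4.6} gives $\alpha_k^{1/2}\bar\varphi_{T_k}\rightharpoonup\alpha^{1/2}\bar\varphi$ in $L^2(Q)$, so $\int_Q\alpha_k\bar\varphi_{T_k}^2$ is lower semicontinuous as well. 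The three lower semicontinuities then force $\int_Q\beta_k\bar\varphi_{T_k}^2$ to have limit superior at most $\int_Q\beta\bar\varphi^2$, and actual convergence of this negative-part term will close the argument: it reverses the inequalities and, combined, forces each of the three to converge individually.

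The main obstacle is therefore establishing $\int_Q\beta_k\bar\varphi_{T_k}^2\to\int_Q\beta\bar\varphi^2$. I will split $Q=Q_{T^*}\cup(Q\setminus Q_{T^*})$ for fixed $T^*>0$. On $Q_{T^*}$ the compactness of the embedding $W(0,T^*)\subset L^2(Q_{T^*})$ gives $\bar\varphi_{T_k}\to\bar\varphi$ strongly in $L^2(Q_{T^*})$; after extracting pointwise a.e.\ subsequences, $\beta_k\to\beta$ a.e.\ with $|\beta_k|\le C_{M_f}$, and dominated convergence yields $\int_{Q_{T^*}}\beta_k\bar\varphi_{T_k}^2\to\int_{Q_{T^*}}\beta\bar\varphi^2$. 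On $Q\setminus Q_{T^*}$ the control is furnished by Theorem~\ref{T4.3}: for $t,T_k>T^*$ large the estimate $|\Omega_t^{m_f,M_f}|\le\|\bar y_{T_k}(t)\|^2_{L^2(\Omega)}/m_f^2<\varepsilon^2/m_f^2$ combined with Hölder, the embedding $H^1(\Omega)\hookrightarrow L^4(\Omega)$, and the uniform bound of $\bar\varphi_{T_k}$ in $L^2(\I;H^1(\Omega))$ from Lemma~\ref{L4.4} makes the tail contribution arbitrarily small uniformly in $k$; the same bound applies to $\int_{Q\setminus Q_{T^*}}\beta\bar\varphi^2$ via Fatou. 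Letting $T^*\to\infty$ completes the proof.
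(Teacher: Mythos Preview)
Your overall strategy coincides with the paper's: test the adjoint equation with $\bar\varphi_{T_k}$, use \eqref{E4.30} at $t=0$ for the limit identity, and combine weak lower semicontinuity with convergence of the right-hand side to force norm convergence via (a three-term version of) Lemma~\ref{L4.1}. The paper compresses all this into a single chain of inequalities and invokes Step~1 of Lemma~\ref{L4.6} to handle the nonlinear term; you are more explicit, splitting $\partial_y f=\alpha-\beta$ and proving actual convergence of the bounded negative-part integral $\int_Q\beta_k\bar\varphi_{T_k}^2$ by a finite-horizon/tail decomposition. That extra care is justified: the weak convergence from Step~1 alone gives lower semicontinuity of $\int|\partial_y f|\bar\varphi_{T_k}^2$, not directly of the signed integral, so your tail argument (essentially the computation in \eqref{E4.18} or, more simply, Lemma~\ref{L4.5}) is exactly what closes the gap.

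One sentence in your write-up is incorrect and should be deleted: ``The three lower semicontinuities then force $\int_Q\beta_k\bar\varphi_{T_k}^2$ to have limit superior at most $\int_Q\beta\bar\varphi^2$.'' From the energy identity $A_k+B_k+C_k-D_k=R_k\to R$ and $\liminf A_k\ge A$, $\liminf B_k\ge B$, $\liminf C_k\ge C$ you obtain $\liminf D_k\ge D$, \emph{not} $\limsup D_k\le D$; there is no a priori upper bound on $A_k,B_k,C_k$ that would reverse the direction. Fortunately this claim is redundant: in the next paragraph you prove $D_k\to D$ outright, which is what the argument actually needs. With that correction the proof is complete and equivalent in substance to the paper's.
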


\begin{proof}
Recalling Remark \ref{R4.2} and using the convergence established in Step 1 of the proof of Lemma \ref{L4.6} we get with \eqref{E3.1} and \eqref{E4.30}
\begin{align*}
&\frac{1}{2}\|\bar\varphi(0)\|^2_{L^2(\Omega)} + \int_0^\infty\int_\Omega[|\nabla\bar\varphi|^2 + a\bar\varphi^2]\dx\dt\\
& \le \liminf_{k \to \infty}\left\{\frac{1}{2}\|\bar\varphi_{T_k}(0)\|^2_{L^2(\Omega)} + \int_0^\infty\int_\Omega[|\nabla\bar\varphi_{T_k}^2 + a\bar\varphi_{T_k}^2]\dx\dt\right\}\\
&\le \limsup_{k \to \infty}\left\{\frac{1}{2}\|\bar\varphi_{T_k}(0)\|^2_{L^2(\Omega)} + \int^\infty_0\int_\Omega[|\nabla\bar\varphi_{T_k}^2 + a\bar\varphi_{T_k}^2]\dx\dt\right\}\\
& = \limsup_{k \to \infty}\left\{\frac{1}{2}\|\bar\varphi_{T_k}(0)\|^2_{L^2(\Omega)} + \int_0^{T_k}\int_\Omega[|\nabla\bar\varphi_{T_k}^2 + a\bar\varphi_{T_k}^2]\dx\dt\right\}\\
&= \limsup_{k \to \infty}\left\{\int_0^\infty\int_\Omega(2\bar y_{T_k} - y_d - \bar y)\bar\varphi_{T_k}\dx\dt - \int_0^\infty\int_\Omega\frac{\partial f}{\partial y}(x,t,\bar y_{T_k})\bar\varphi_{T_k}^2\dx\dt\right\}\\
&\le \int_0^\infty\int_\Omega(\bar y - y_d)\bar\varphi\dx\dt -  \liminf_{k \to \infty}\int_0^\infty\int_\Omega\frac{\partial f}{\partial y}(x,t,\bar y_{T_k})\bar\varphi_{T_k}^2\dx\dt\\
& \le \frac{1}{2}\|\bar\varphi(0)\|^2_{L^2(\Omega)} + \int_0^\infty\int_\Omega[|\nabla\bar\varphi|^2 + a\bar\varphi^2]\dx\dt.
\end{align*}
Recalling Lemma \ref{L4.1}, the above inequalities imply the convergence
\[
\lim_{k \to \infty} \int_0^\infty\int_\Omega[|\nabla\bar\varphi_{T_k}^2 + a\bar\varphi_{T_k}^2]\dx\dt = \int_0^\infty\int_\Omega[|\nabla\bar\varphi|^2 + a\bar\varphi^2]\dx\dt.
\]
This identity and the weak convergence $\bar\varphi_{T_k} \rightharpoonup \bar\varphi$ in $L^2(0,\infty;H^1(\Omega))$ prove the strong convergence.
\end{proof}

Analogously to Definition \ref{D2.1} we have the following definition.
\begin{definition}
We call $\varphi$ a solution to
\begin{equation}
\left\{\begin{array}{rcll}
\displaystyle
-\frac{\partial\varphi}{\partial t}- \Delta\varphi + a\varphi + \frac{\partial f}{\partial y}(x,t,\bar y)\varphi& = &\displaystyle \bar y - y_d&
\mbox{in } Q,\\[0.5ex]
\partial_n\varphi & = & 0      & \mbox{on } \Sigma\end{array}\right.
\label{E4.32}
\end{equation}
if $\varphi \in L^2(\I;H^1(\Omega))$ and for every $T > 0$ the restriction of $\varphi$ to $Q_T$ belongs to $W(0,T)$ and satisfies
\begin{align}
&-\int_0^T\langle\frac{\partial\varphi}{\partial t},z\rangle\dt + \int_0^T\int_\Omega[\nabla\varphi\nabla z + a\varphi z]\dx\dt + \int_0^T\int_\Omega\frac{\partial f}{\partial y}(x,t,\bar y)\varphi z\dx\dt\notag\\
& = \int_0^T\int_\Omega(\bar y - y_d)z\dx\dt \quad \forall z \in L^2(\IT;H^1(\Omega)), \label{E4.33}\\
&\lim_{t \to \infty}\|\varphi(t)\|_{L^2(\Omega)} = 0. \label{E4.34}
\end{align}
\label{D4.1}
\end{definition}

\begin{theorem}
The function $\bar\varphi$ is the unique solution of \eqref{E4.32} and $\bar\varphi_T \to \bar\varphi$ strongly in $L^2(\I;H^1(\Omega))$ as $T \to \infty$.
\label{T4.4}
\end{theorem}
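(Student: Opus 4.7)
The plan is to combine the convergences established in the previous subsection with a uniqueness argument for the adjoint equation \eqref{E4.32}. First, I would verify that $\bar\varphi$ solves \eqref{E4.32} in the sense of Definition \ref{D4.1}. The regularity $\bar\varphi \in L^2(\I;H^1(\Omega))$ and the fact that its restriction to each $Q_T$ belongs to $W(0,T)$ have already been obtained in Lemma \ref{L4.4}; the variational identity \eqref{E4.33} is nothing but \eqref{E4.31} specialized to $t = 0$, which was established in Step 2 of the proof of Lemma \ref{L4.6}; and the decay \eqref{E4.34} is exactly Corollary \ref{C4.1}.

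Next, I would establish uniqueness. Let $\varphi_1, \varphi_2$ be two solutions and set $w = \varphi_1 - \varphi_2$, which satisfies the homogeneous adjoint equation and $\|w(t)\|_{L^2(\Omega)} \to 0$ as $t \to \infty$. Testing on an interval $(t_1, t_2)$ yields
\[
\frac{1}{2}\|w(t_1)\|^2_{L^2(\Omega)} - \frac{1}{2}\|w(t_2)\|^2_{L^2(\Omega)} + \int_{t_1}^{t_2}\int_\Omega\Big[|\nabla w|^2 + aw^2 + \frac{\partial f}{\partial y}(x,t,\bar y)w^2\Big]\dx\dt = 0.
\]
By \eqref{E2.3} and \eqref{E4.10}, $\frac{\partial f}{\partial y}(\cdot,\cdot,\bar y)$ can be negative only on $\{m_f \le |\bar y| \le M_f\}$, where by \eqref{E4.13} it is bounded by $C_{M_f}$. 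Mimicking the chain of estimates \eqref{E4.18} from the proof of Lemma \ref{L4.2} (the bound $|\bar y| \ge m_f$ on this set together with the $H^1 \hookrightarrow L^4$ embedding) gives
\[
\int_\Omega\Big|\frac{\partial f}{\partial y}(x,t,\bar y)\Big|w(t)^2\dx \le \frac{C_{M_f}C_4^2}{m_f}\|\bar y(t)\|_{L^2(\Omega)}\|w(t)\|^2_{H^1(\Omega)}.
\]
By property \eqref{E2.9} applied to $\bar y$, there exists $T_* > 0$ such that the prefactor is smaller than $C_a^2$ for all $t \ge T_*$, so this term is absorbed into $\int_\Omega[|\nabla w|^2 + aw^2]\dx$ via \eqref{E2.6}. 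Consequently, $\|w(t_1)\|_{L^2(\Omega)} \le \|w(t_2)\|_{L^2(\Omega)}$ for $T_* \le t_1 < t_2$, and sending $t_2 \to \infty$ yields $w \equiv 0$ on $[T_*,\infty)$. Standard backward uniqueness for the linear parabolic equation on $[0, T_*]$ with bounded coefficient $\frac{\partial f}{\partial y}(\cdot,\cdot,\bar y) \in L^\infty(Q_{T_*})$ (cf.\ Remark \ref{R4.1}) and terminal value $w(T_*) = 0$ then forces $w \equiv 0$ on all of $Q$.

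Finally, the upgrade from subsequential to full convergence is routine. The bounds in Lemmas \ref{L4.3}--\ref{L4.5} are uniform in $T$, so any sequence $T_k \to \infty$ admits a further subsequence along which Lemma \ref{L4.7} produces strong convergence in $L^2(\I;H^1(\Omega))$ to some solution of \eqref{E4.32}. Uniqueness forces this limit to coincide with $\bar\varphi$, and the usual subsequence-of-every-subsequence principle then gives $\bar\varphi_T \to \bar\varphi$ strongly in $L^2(\I;H^1(\Omega))$ for the whole family.

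The main obstacle will be the uniqueness step, because $\frac{\partial f}{\partial y}(\cdot,\cdot,\bar y)$ may take negative values and a naive energy estimate on $(t_1,t_2)$ does not close (the Gronwall-type inequality one obtains propagates backward and grows exponentially). The key insight that makes everything work is that the bad region $\{m_f \le |\bar y| \le M_f\}$ shrinks in an $L^2$-controlled sense as $t \to \infty$, so that the infinite-horizon uniqueness reduces to an absorption argument on $[T_*,\infty)$ followed by backward uniqueness on the compact interval $[0,T_*]$.
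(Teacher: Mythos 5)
Your proposal is correct and follows essentially the same route as the paper: identify $\bar\varphi$ as a solution of \eqref{E4.32} via \eqref{E4.31} and Corollary \ref{C4.1}, prove uniqueness by splitting time at the instant where $\|\bar y(t)\|_{L^2(\Omega)}$ becomes small enough for the $L^4$-embedding absorption of the indefinite coefficient (exactly the \eqref{E4.18}-type estimate), and upgrade to full-family convergence by uniqueness of the limit. The only cosmetic difference is that the paper treats the initial interval in one shot by testing with the exponentially weighted function $\mathrm{e}^{2C_{M_f}(t-T_{a,f})}\varphi$ rather than your two-stage argument (absorption on $[T_*,\infty)$ followed by propagation backward from $w(T_*)=0$); note that the latter step is just the elementary uniqueness of the well-posed terminal-value problem for the adjoint equation with bounded coefficient, not ``backward uniqueness'' in the classical hard sense, so your invocation is sound.
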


\begin{proof}
The fact that $\bar\varphi$ is a solution of \eqref{E4.32} follows from \eqref{E4.29} and \eqref{E4.31}. Let us prove the uniqueness. It is enough to prove that the unique function satisfying \eqref{E4.33} and \eqref{E4.34} with a zero right hand side in \eqref{E4.33} is the zero function. From \eqref{E2.9} we deduce the existence of $T_{a,f} < \infty$ such that
\[
\|\bar y(t)\|_{L^2(\Omega)} < \frac{m_fC_a^2}{2C_4^2C_{M_f}}\ \ \ \forall t > T_{a,f}.
\]
Using this inequality and arguing as in \eqref{E4.18}
we infer
\begin{equation}
\int_{T_{a,f}}^T\int_{\Omega_t^{m_f,M_f}} \big|\frac{\partial f}{\partial y}(x,t,\bar y)\big|\varphi^2\dx\dt \le \frac{C_a^2}{2}\int_0^T\|
\varphi\|^2_{H^1(\Omega)}\dt \ \ \forall T > T_{a,f}.
\label{E4.35}
\end{equation}
Now, we take
\[
z(x,t) = \left\{\begin{array}{cl}\displaystyle \text{\rm e}^{2C_{M_f}(t - T_{a,f})}\varphi(x,t) & \text{if } t \le T_{a,f},\\\varphi(x,t) & \text{otherwise.}\end{array}\right.
\]
Inserting this function in \eqref{E4.33} we obtain for every $T > T_{a,f}$
\begin{align}
&-\frac{1}{2}\int_0^{T_{a,f}}\text{\rm e}^{2C_{M_f}(t - T_{a,f})}\frac{d}{dt}\|\varphi(t)\|^2_{L^2(\Omega)}\dt -\frac{1}{2}\int_{T_{a,f}}^T\frac{d}{dt}\|\varphi(t)\|^2_{L^2(\Omega)}\dt\notag\\
&  + \int_0^{T_{a,f}}\int_\Omega\text{\rm e}^{2C_{M_f}(t - T_{a,f})}[|\nabla\varphi|^2 + a\varphi^2]\dx \dt +  \int_{T_{a,f}}^T\int_\Omega[|\nabla\varphi|^2 + a\varphi^2]\dx \dt\label{E4.36}\\
& + \int_0^{T_{a,f}}\int_\Omega\text{\rm e}^{2C_{M_f}(t - T_{a,f})}\frac{\partial f}{\partial y}(x,t,\bar y)\varphi^2\dx\dt + \int_{T_{a,f}}^T\int_\Omega\frac{\partial f}{\partial y}(x,t,\bar y)\varphi^2\dx\dt= 0.\notag
\end{align}
Integrating by parts and using \eqref{E2.5} we deduce
\begin{align}
&-\frac{1}{2}\int_0^{T_{a,f}}\text{\rm e}^{2C_{M_f}(t - T_{a,f})}\frac{d}{dt}\|\varphi(t)\|^2_{L^2(\Omega)}\dt + \int_0^{T_{a,f}}\int_\Omega\text{\rm e}^{2C_{M_f}(t - T_{a,f})}\frac{\partial f}{\partial y}(x,t,\bar y)\varphi^2\dx\dt\notag\\
& \ge  \frac{\exp{(-2C_{M_f}T_{a,f})}}{2}\|\varphi(0)\|^2_{L^2(\Omega)} - \frac{1}{2}\|\varphi(T_{a,f})\|^2_{L^2(\Omega)}.\label{E4.37}
\end{align}
We also have with \eqref{E2.6}
\begin{align}
&\hspace{-0.5cm}-\frac{1}{2}\int_{T_{a,f}}^T\frac{d}{dt}\|\varphi(t)\|^2_{L^2(\Omega)}\dt = \frac{1}{2}\|\varphi(T_{a,f})\|^2_{L^2(\Omega)} - \frac{1}{2}\|\varphi(T)\|^2_{L^2(\Omega)},\label{E4.38}\\
&\hspace{-0.5cm}\int_0^{T_{a,f}}\int_\Omega\text{\rm e}^{2C_{M_f}(t - T_{a,f})}[|\nabla\varphi|^2 + a\varphi^2]\dx \dt\ge \text{\rm e}^{-2C_{M_f}T_{a,f}}C_a^2\int_0^{T_{a,f}}\|\varphi\|^2_{H^1(\Omega)}\dt.\label{E4.39}
\end{align}
Finally, from \eqref{E2.6},  \eqref{E4.10},  and and \eqref{E4.35} we get
\begin{equation}
\int_{T_{a,f}}^T\int_\Omega[|\nabla\varphi|^2 + a\varphi^2]\dx \dt + \int_{T_{a,f}}^T\int_\Omega\frac{\partial f}{\partial y}(x,t,\bar y)\varphi^2\dx\dt \ge \frac{C_a^2}{2}\int_{T_{a,f}}^T\|\varphi\|^2_{H^1(\Omega)}\dt. \label{E4.40}
\end{equation}
Adding the relationships \eqref{E4.37}--\eqref{E4.40} we obtain with \eqref{E4.36}
\[
 \frac{\exp{(-2C_{M_f}T_{a,f})}}{2}\Big[\|\varphi(0)\|^2_{L^2(\Omega)} + C_a^2\int_0^T\|\varphi\|^2_{H^1(\Omega)}\dt\Big] \le \frac{1}{2}\|\varphi(T)\|^2_{L^2(\Omega)}.
\]
Taking $T \to \infty$ and using \eqref{E4.34} we conclude that $\|\varphi\|_{L^2(\I;H^1(\Omega))} = 0$ and the uniqueness follows. This also implies the uniqueness of the limits of subsequences and, hence with Lemma \ref{L4.7}, the whole family $\{\bar\varphi_T\}_{T > 0}$ converges to $\bar\varphi$ in $L^2(\I;H^1(\Omega))$ as $T \to \infty$.
\end{proof}

\subsection{Convergence of $\{(\bar\lambda_T,\bar\mu_T)\}_{T > 0}$}
\label{S4.3}
The aim of this section is to prove the following theorem.

\begin{theorem}
The family $\{(\bar\lambda_T,\bar\mu_T)\}_{T > 0}$ is bounded in $L^\infty(\I;L^2(\Omega))^2$. Therefore, there exist sequences $\{T_k\}_{k = 1}^\infty$ converging to $\infty$ such that $(\bar\lambda_{T_k},\bar\mu_{T_k}) \stackrel{*}{\rightharpoonup} (\bar\lambda,\bar\mu)$ in $L^\infty(\I;L^2(\Omega))^2$ as $k \to \infty$ holds. Moreover, each of these limits satisfies
\begin{align}
&\bar\lambda \in \partial j(\bar u),\label{E4.41}\\
&\int_0^\infty\int_\omega\bar\mu(u - \bar u)\dx\dt \le 0\quad \forall u \in \Uad, \label{E4.42}\\
&{\bar\varphi}_{\mid Q_\omega} + \kappa\bar\lambda + \bar\mu = 0.\label{E4.43}
\end{align}
In addition, if $\K$ is given by \eqref{E1.2} or \eqref{E1.3}, then $(\bar\lambda,\bar\mu)$ is unique and $(\bar\lambda_{T},\bar\mu_{T}) \stackrel{*}{\rightharpoonup} (\bar\lambda,\bar\mu)$ in $L^\infty(\I;L^2(\Omega))$ as $T \to \infty$.
\label{T4.5}
\end{theorem}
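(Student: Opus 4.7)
The plan is to proceed in four stages.

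\emph{First}, I establish the uniform bound. By \eqref{E3.5}, $\|\lambda_T(t)\|_{L^2(\omega)} \le 1$ for almost all $t \in (\IT)$, so $\bar\lambda_T$ lies in the unit ball of $L^\infty(\I;L^2(\omega))$ for every $T$. The estimates in Lemmas \ref{L4.3} and \ref{L4.4} are actually uniform in $T$, giving $\|\bar\varphi_T\|_{L^\infty(\I;L^2(\Omega))} \le C$. The identity \eqref{E3.3} then forces $\bar\mu_T$ into a bounded set of $L^\infty(\I;L^2(\omega))$. Banach--Alaoglu in the duality $L^\infty(\I;L^2(\omega)) = (L^1(\I;L^2(\omega)))^*$ produces sequences $T_k \to \infty$ and weak-* limits $(\bar\lambda,\bar\mu)$, and weak-* lower semicontinuity preserves $\|\bar\lambda(t)\|_{L^2(\omega)} \le 1$ a.e.

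\emph{Second}, I pass to the limit in \eqref{E3.3}. Theorem \ref{T4.4} gives $\bar\varphi_{T_k} \to \bar\varphi$ strongly in $L^2(\I;H^1(\Omega))$ and hence in $L^2(Q_\omega)$; combined with the weak $L^2(Q_\omega)$ convergences of $\bar\lambda_{T_k}$ and $\bar\mu_{T_k}$ coming from the weak-* convergence in $L^\infty$, this allows \eqref{E3.3} to be passed to the limit in $L^2(Q_\omega)$, yielding \eqref{E4.43}.

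\emph{Third (the crux)}, I verify \eqref{E4.41} and \eqref{E4.42} together. The obstacle is that $\lambda_T \in \partial j_T(u_T)$ encodes the identity $\int_0^T\!\int_\omega \lambda_T u_T = j_T(u_T)$, which is a weak--weak product with no direct passage to the limit. The idea is to rewrite this identity via \eqref{E3.3} tested against $\bar u_T$:
\[
\int_0^\infty\!\!\int_\omega \bar\mu_T \bar u_T\dx\dt = -\int_0^\infty\!\!\int_\omega \bar\varphi_T \bar u_T\dx\dt - \kappa j(\bar u_T),
\]
and then combine it with \eqref{E3.2} tested against the restriction to $[0,T]$ of an arbitrary $u \in \Uad$ (which belongs to $\UadT$), yielding
\[
\int_0^\infty\!\!\int_\omega \bar\mu_T\, u\mathbf{1}_{[0,T]}\dx\dt \le \int_0^\infty\!\!\int_\omega \bar\mu_T \bar u_T\dx\dt.
\]
Along $\{T_k\}$, the left side converges to $\int \bar\mu u$ by weak-*/strong duality in $L^\infty/L^1$, while the right side converges to $-\int \bar\varphi \bar u - \kappa j(\bar u)$ using strong $L^2(Q_\omega)$ convergence of $\bar\varphi_{T_k}$, weak $L^2(Q_\omega)$ convergence of $\bar u_{T_k}$ from \eqref{E4.2}, and the norm convergence \eqref{E4.3}. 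Consequently
\[
\int_0^\infty\!\!\int_\omega \bar\mu u\dx\dt \le -\int_0^\infty\!\!\int_\omega \bar\varphi \bar u\dx\dt - \kappa j(\bar u)\qquad\forall u\in\Uad.
\]
Testing \eqref{E4.43} against $\bar u$ gives $\int \bar\mu \bar u = -\int \bar\varphi \bar u - \kappa \int \bar\lambda \bar u$, and choosing $u = \bar u$ above then yields $\int \bar\lambda \bar u \ge j(\bar u)$. The reverse inequality is a Cauchy--Schwarz consequence of $\|\bar\lambda(t)\|_{L^2(\omega)} \le 1$, so $\int \bar\lambda \bar u = j(\bar u)$; combined with the norm bound, this is exactly the characterization of $\bar\lambda \in \partial j(\bar u)$, proving \eqref{E4.41}. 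Substituting back gives $\int \bar\mu \bar u = -\int \bar\varphi \bar u - \kappa j(\bar u)$, so the displayed inequality rearranges to $\int \bar\mu(u - \bar u) \le 0$, which is \eqref{E4.42}.

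\emph{Fourth}, for uniqueness in cases \eqref{E1.2} and \eqref{E1.3}, I derive the pointwise structure of $(\bar\lambda,\bar\mu)$ from \eqref{E4.41}--\eqref{E4.43} as in Lemmas \ref{L3.1}--\ref{L3.2}. For \eqref{E1.3}, the variational inequality \eqref{E4.42} forces the sign pattern \eqref{E3.9} of $\bar\mu$ via pointwise perturbations; on $\{\bar u(t) \ne 0\}$ the identity $\int \bar\lambda \bar u = j(\bar u)$ with $\|\bar\lambda(t)\| \le 1$ forces $\bar\lambda(x,t) = \bar u(x,t)/\|\bar u(t)\|_{L^2(\omega)}$ pointwise, while on $\{\bar u(t)=0\}$ one has $\bar u(x,t) = 0 \in (\alpha,\beta)$, so the sign condition yields $\bar\mu(t) = 0$ and then \eqref{E4.43} gives $\bar\lambda(t) = -\bar\varphi(t)/\kappa$. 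For \eqref{E1.2}, on $\{\|\bar u(t)\| < \gamma\}$ the variational inequality \eqref{E4.42} forces $\bar\mu(t) = 0$ as in Lemma \ref{L3.1}, whereas on $\{\|\bar u(t)\| = \gamma\}$ the subgradient formula fixes $\bar\lambda(t) = \bar u(t)/\gamma$; either way, \eqref{E4.43} determines the remaining unknown. In each case $(\bar\lambda,\bar\mu)$ is fully determined by $(\bar u,\bar\varphi)$, so the limit is independent of the subsequence and the whole family $\{(\bar\lambda_T,\bar\mu_T)\}_{T>0}$ converges weak-* to $(\bar\lambda,\bar\mu)$.
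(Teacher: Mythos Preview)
Your proof is correct and follows essentially the same strategy as the paper's: bound $(\bar\lambda_T,\bar\mu_T)$ via \eqref{E3.5} and \eqref{E3.3}, extract a weak-$*$ convergent subsequence, pass to the limit in \eqref{E3.3}, and then exploit the strong convergence of $\bar\varphi_{T_k}$ together with \eqref{E4.2}--\eqref{E4.3} to handle the nonlinear pairing $\int\bar\lambda_{T_k}\bar u_{T_k}$ and $\int\bar\mu_{T_k}\bar u_{T_k}$.

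The one organizational difference is in the crux step. The paper first establishes the two separate convergences
\[
\int_{Q_\omega}\bar\lambda_{T_k}\bar u_{T_k}\to\int_{Q_\omega}\bar\lambda\,\bar u,\qquad
\int_{Q_\omega}\bar\mu_{T_k}\bar u_{T_k}\to\int_{Q_\omega}\bar\mu\,\bar u,
\]
by invoking Lemma~\ref{L4.1} (the ``two-sequence'' lemma) applied to \eqref{E4.46}--\eqref{E4.48}, and only then passes to the limit in the subdifferential inequality and in \eqref{E3.2}. You instead substitute the identity $\int\bar\lambda_T\bar u_T=j(\bar u_T)$ directly into \eqref{E3.3} tested against $\bar u_T$, obtain the single inequality $\int\bar\mu\,u\le -\int\bar\varphi\,\bar u-\kappa j(\bar u)$ in the limit, and recover both \eqref{E4.41} and \eqref{E4.42} from it by specializing to $u=\bar u$ and using the characterization $\bar\lambda\in\partial j(\bar u)\Leftrightarrow\|\bar\lambda(t)\|_{L^2(\omega)}\le 1$ and $\int\bar\lambda\,\bar u=j(\bar u)$. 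Your route is a bit more streamlined (it dispenses with Lemma~\ref{L4.1}), but the underlying ingredients are identical. The uniqueness argument in stage four matches the paper's derivation of \eqref{E4.49} from the pointwise structure of $\partial j(\bar u)$ and the vanishing of $\bar\mu(t)$ when $\|\bar u(t)\|_{L^2(\omega)}=0$.
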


\begin{proof}
The boundedness of  $\{({\bar\varphi}_{T\mid Q_\omega},\bar\lambda_T)\}_{T > 0}$ in $L^\infty(\I;L^2(\Omega))^2$  follows from \eqref{E3.5} and \eqref{E4.21}. This along with the identity \eqref{E3.3} yields the boundedness of $\{\bar\mu_T\}_{T > 0}$. Hence, there exists a subsequence $\{T_k\}_{k = 1}^\infty$ converging to $\infty$ such that $\{(\bar\lambda_{T_k},\bar\mu_{T_k})\}_{k = 1}^\infty$ converges weakly$^*$ in $L^\infty(\I;L^2(\Omega))^2$ to elements $(\bar\lambda,\bar\mu)$ as $k \to \infty$. Then, \eqref{E4.43} is obtained passing to the limit in \eqref{E3.3}. Let us prove that $\bar\lambda$ and $\bar\mu$ satisfy \eqref{E4.41} and \eqref{E4.42}, respectively. Since $\bar\lambda_{T_k} \in \partial j_{T_k}(\bar u_{T_k})$, for every $u \in L^1(\I;L^2(\Omega))$ we have
\begin{align*}
&\int_0^\infty\int_\omega \bar\lambda_{T_k}(u - \bar u_{T_k})\dx\dt + j(\bar u_{T_k})\\
& = \int_0^{T_k}\int_\omega \bar\lambda_{T_k}(u - \bar u_{T_k})\dx\dt + j_{T_k}(\bar u_{T_k}) \le j_{T_k}(u) \le j(u).
\end{align*}
Weak$^*$ convergence $\bar\lambda_{T_k} \stackrel{*}{\rightharpoonup}\bar\lambda$  in $L^\infty(\I;L^2(\Omega))$  and \eqref{E4.3} yield
\[
\lim_{k \to \infty}\int_0^\infty\int_\omega\bar\lambda_{T_k}u\dx\dt = \int_0^\infty\int_\omega\bar\lambda u\dx\dt \ \text{ and }\ j(\bar u) = \lim_{k \to \infty}j(\bar u_{T_k}).
\]
To establish \eqref{E4.41} we shall verify
\begin{equation}
\lim_{k \to \infty}\int_0^\infty\int_\omega\bar\lambda_{T_k}\bar u_{T_k} \dx\dt = \int_0^\infty\int_\omega\bar\lambda\bar u\dx\dt
\label{E4.44}
\end{equation}
below. Inequality \eqref{E4.42} is  obtained passing to limit in \eqref{E3.2} assuming  that the convergence
\begin{equation}
\lim_{k \to \infty}\int_0^\infty\int_\omega\bar\mu_{T_k}\bar u_{T_k} \dx\dt = \int_0^\infty\int_\omega\bar\mu\bar u\dx\dt
\label{E4.45}
\end{equation}
holds. To prove \eqref{E4.44} and \eqref{E4.45} we use Lemma \ref{L4.1}. For this purpose we take into account  the strong convergence $\bar\varphi_{T_k} \to \bar\varphi$ in $L^2(Q)$, \eqref{E3.3}, \eqref{E4.2}, and \eqref{E4.43} to get
\begin{align}
&\lim_{k \to \infty}\Big(\kappa\int_0^\infty\int_\omega\bar\lambda_{T_k}\bar u_{T_k}\dx\dt +  \int_0^\infty\int_\omega\bar\mu_{T_k}\bar u_{T_k}\dx\dt\Big)\notag\\
&= -\lim_{k \to \infty}\int_0^\infty\int_\omega\bar\varphi_{T_k}\bar u_{T_k}\dx\dt = -\int_0^\infty\int_\omega\bar\varphi\bar u\dx\dt\notag\\
& = \kappa\int_0^\infty\int_\omega\bar\lambda\bar u\dx\dt +  \int_0^\infty\int_\omega\bar\mu\bar u\dx\dt.\label{E4.46}
\end{align}
From \eqref{E3.2} we infer
\[
\int_0^\infty\int_\omega\bar\mu u \dx\dt = \lim_{k \to \infty}\int_0^\infty\bar\mu_{T_k}u\dx\dt \le \liminf_{k \to \infty}\int_0^\infty\bar\mu_{T_k}\bar u_{T_k}\dx\dt \ \ \forall u \in \Uad.
\]
Taking $u = \bar u$, we obtain
\begin{equation}
\int_0^\infty\int_\omega\bar\mu\bar u \dx\dt \le \liminf_{k \to \infty}\int_0^\infty\bar\mu_{T_k}\bar u_{T_k}\dx\dt.\label{E4.47}
\end{equation}
Further, from \eqref{E3.5} we get $\|\bar\lambda\|_{L^\infty(\I;L^2(\omega))} \le \liminf_{k \to \infty}\|\bar\lambda_{T_k}\|_{L^\infty(\I;L^2(\omega))} \le 1$. Using this fact, \eqref{E4.3}, and again \eqref{E3.5} we deduce
\begin{align}
&\int_0^\infty\int_\omega\bar\lambda\bar u \dx\dt \le \|\bar u\|_{L^1(\I;L^2(\omega))}\notag\\
& = \lim_{k \to \infty}\|\bar u_{T_k}\|_{L^1(\I;L^2(\omega))} = \lim_{k \to \infty}\int_0^\infty\int_\omega\bar\lambda_{T_k}\bar u_{T_k} \dx\dt.
\label{E4.48}
\end{align}
Then, \eqref{E4.46}, \eqref{E4.47}, \eqref{E4.48}, and Lemma \ref{L4.1} imply \eqref{E4.44} and \eqref{E4.45}. Thus, \eqref{E4.41}--\eqref{E4.43} are satisfied by $(\bar u,{\bar\varphi}_{\mid_\omega},\bar\lambda,\bar\mu)$.

Now, we assume that $\K$ is given by \eqref{E1.2} or \eqref{E1.3}. From $\bar\lambda \in \partial j(\bar u)$ we know that \cite[Proposition 3.8]{CHW2017}
\[
\bar\lambda(x,t) = \frac{\bar u(x,t)}{\|\bar u(t)\|_{L^2(\omega)}} \text{ for a.a. } (x,t) \in Q_\omega \text{ if } \|\bar u(t)\|_{L^2(\omega)} \neq 0.
\]
Furthermore, from \eqref{E4.42}, similarly to \eqref{E3.7} and \eqref{E3.9}, we infer that $\|\bar\mu(t)\|_{L^2(\omega)} = 0$ if $\|\bar u(t)\|_{L^2(\omega)} = 0$. These facts along with \eqref{E4.43} lead to
\begin{equation}
\bar\lambda(x,t) = \left\{\begin{array}{cl}\displaystyle - \frac{1}{\kappa}\bar\varphi_{\mid Q_\omega}(x,t)&\text{if }\|\bar u(t)\|_{L^2(\omega)} = 0,\\[1.5ex]\displaystyle\frac{\bar u(x,t)}{\|\bar u(t)\|_{L^2(\omega)}}&\text{otherwise,}\end{array}\right. \text{ for a.a. } (x,t) \in Q_\omega.
\label{E4.49}
\end{equation}
Therefore, the limit $\bar\lambda$ is uniquely defined. Consequently, the whole family $\{\bar\lambda_T\}_{T > 0}$ converges to $\bar\lambda$. Using again \eqref{E4.43}, we deduce that the whole family $\{\bar\mu_T\}_{T > 0}$ converges to $-(\bar\varphi_{\mid Q_\omega} + \kappa\bar\lambda) = \bar\mu$. This concludes the proof.
\end{proof}

\section{Optimality Conditions for Problem \Pb}
\label{S5}\setcounter{equation}{0}

The following theorem is a consequence of Lemma \ref{L4.4}  and Theorems \ref{T4.4} and \ref{T4.5}.

\begin{theorem}
If $\bar u$ is a solution of \Pb with associated state $\bar y$, then there exist $\bar\varphi \in L^2(\I;H^1(\Omega)) \cap L^\infty(\I;L^2(\Omega))$ such that ${\bar\varphi}_{\mid_{Q_T}} \in W(0,T)$ for every $T < \infty$, $\bar\lambda \in \partial j(\bar u) \subset L^\infty(\I;L^2(\omega))$, and $\bar\mu \in L^\infty(\I;L^2(\omega))$ satisfying
\begin{align}
&\left\{\begin{array}{rcll}
\displaystyle
-\frac{\partial\bar\varphi}{\partial t}- \Delta\bar\varphi + a\varphi + \frac{\partial f}{\partial y}(x,t,\bar y)\bar\varphi& = &\displaystyle \bar y - y_d&
\mbox{in } Q,\\[0.5ex]
\partial_n\bar\varphi & = & 0      & \mbox{on } \Sigma\end{array}\right.
\label{E5.1}\\
&\int_0^\infty\int_\omega\bar\mu(u - \bar u)\dx\dt \le 0\quad \forall u \in \Uad, \label{E5.2}\\
&{\bar\varphi}_{\mid Q_\omega} + \kappa\bar\lambda + \bar\mu = 0.\label{E5.3}
\end{align}
The adjoint state $\bar\varphi$ is unique. Moreover, if $\K$ is given by \eqref{E1.2} or \eqref{E1.3}, then $\bar\lambda$ and $\bar\mu$ are unique as well.
\label{T5.1}
\end{theorem}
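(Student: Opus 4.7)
The plan is to assemble the statement by pulling the relevant pieces out of Section 4, since the heavy analytical work (passing to the limit $T\to\infty$ in the finite-horizon optimality systems) has already been carried out there. First I would take $\bar u$ to be the given solution of \Pb with state $\bar y$, and for each $T>0$ invoke Theorem \ref{T3.1} to obtain optimal controls $u_T$ for \PbT together with states $y_T$, adjoints $\varphi_T$, subgradients $\lambda_T\in\partial j_T(u_T)$, and multipliers $\mu_T$. Extending by $0$ on $(T,\infty)$ as in Section \ref{S4}, I work with the families $(\bar u_T,\bar y_T,\bar\varphi_T,\bar\lambda_T,\bar\mu_T)$.

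Next I would quote the three ingredients. Theorem \ref{T4.4} gives a unique $\bar\varphi$ satisfying the adjoint equation \eqref{E5.1} (the pointwise definition \eqref{E4.32}--\eqref{E4.34}), together with the strong convergence $\bar\varphi_T\to\bar\varphi$ in $L^2(\I;H^1(\Omega))$; combined with Lemma \ref{L4.4}, this also yields $\bar\varphi\in L^\infty(\I;L^2(\Omega))$ (by weak-$*$ lower semicontinuity of the norm) and $\bar\varphi_{\mid Q_T}\in W(0,T)$ for each $T<\infty$ (using the weak convergence $\bar\varphi_{T_k}\rightharpoonup\bar\varphi$ in $W(0,T)$ and lower semicontinuity of the $W(0,T)$-norm). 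Theorem \ref{T4.5} directly provides $\bar\lambda\in\partial j(\bar u)$ and $\bar\mu\in L^\infty(\I;L^2(\omega))$ as weak-$*$ limits of suitable subsequences of $(\bar\lambda_T,\bar\mu_T)$, which satisfy \eqref{E5.2} and \eqref{E5.3}. This establishes existence of the claimed triple $(\bar\varphi,\bar\lambda,\bar\mu)$ with the required regularity.

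For the uniqueness statements, I would argue as follows. Uniqueness of $\bar\varphi$ is exactly the uniqueness part of Theorem \ref{T4.4}, which was proved by an exponential-weight energy estimate exploiting \eqref{E2.5} and the smallness of $\|\bar y(t)\|_{L^2(\Omega)}$ for large $t$ guaranteed by \eqref{E2.9} together with hypothesis \eqref{E4.10}. When $\K$ is of the form \eqref{E1.2} or \eqref{E1.3}, the explicit formula \eqref{E4.49} derived in the proof of Theorem \ref{T4.5} identifies $\bar\lambda$ pointwise in terms of $\bar u$ on $\{\|\bar u(t)\|_{L^2(\omega)}\neq 0\}$ and in terms of $\bar\varphi_{\mid Q_\omega}$ elsewhere; since $\bar u$ and $\bar\varphi$ are already unique, so is $\bar\lambda$, and then $\bar\mu=-(\bar\varphi_{\mid Q_\omega}+\kappa\bar\lambda)$ by \eqref{E5.3}.

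There is no genuine obstacle at this stage: all the analytic difficulty lay in establishing the compactness and passage to the limit in Section \ref{S4}. The only care required is bookkeeping on the functional framework -- in particular checking that the limit $\bar\varphi$ indeed inherits $L^\infty(\I;L^2(\Omega))$ and $W(0,T)$-regularity from the approximations, which follows because these norms are preserved under the (weak-$*$ and weak) convergences obtained in Lemma \ref{L4.4} -- and recalling that \eqref{E4.49} is the mechanism which upgrades weak-$*$ convergence of a subsequence of $\{\bar\lambda_T\}$ to convergence of the whole family in the two concrete cases \eqref{E1.2} and \eqref{E1.3}.
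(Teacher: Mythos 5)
Your proposal is correct and follows exactly the route the paper takes: the paper states Theorem \ref{T5.1} as an immediate consequence of Lemma \ref{L4.4} and Theorems \ref{T4.4} and \ref{T4.5}, which is precisely the assembly you describe (adjoint equation and uniqueness of $\bar\varphi$ from Theorem \ref{T4.4}, regularity from Lemma \ref{L4.4}, and the pair $(\bar\lambda,\bar\mu)$ with \eqref{E5.2}--\eqref{E5.3} and their uniqueness via \eqref{E4.49} from Theorem \ref{T4.5}). The extra bookkeeping you supply on lower semicontinuity of the $L^\infty(0,\infty;L^2(\Omega))$- and $W(0,T)$-norms under the convergences of Lemma \ref{L4.4} is a correct filling-in of details the paper leaves implicit.
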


The function $\bar\lambda$ satisfies \eqref{E4.49}. Moreover, if $\K$ is given by \eqref{E1.2} or \eqref{E1.3}, then $(\bar\mu,\bar u)$ satisfy \eqref{E3.6}-\eqref{E3.8} or \eqref{E3.9}, respectively,  with $(\mu_T,u_T)$ replaced by $(\bar\mu,\bar u)$ and for almost all $T \in (0,\infty)$. In particular, these properties imply that $\sign{\bar\lambda(x,t)} = \sign{\bar\mu(x,t)} = \sign{\bar u(x,t)}$ for almost all $(x,t) \in Q_\omega$. Therefore,  we have the inequality
\begin{equation}
\|\kappa\bar\lambda(t) + \bar\mu(t)\|_{L^2(\omega)} \ge \kappa\|\bar\lambda(t)\|_{L^2(\omega)}\ \text{ for a.a. } t \in (0,\infty).
\label{E5.4}
\end{equation}
Using it, we deduce the following consequence from Theorem \ref{T5.1}.

\begin{corollary}
Let $\K$ be given by \eqref{E1.2} or \eqref{E1.3}. Then,  the following sparsity property holds for almost all $t \in (0,\infty)$
\begin{equation}
\left\{\begin{array}{ll}\displaystyle\text{if } \|\bar\varphi(t)\|_{L^2(\omega)} < \kappa&\Rightarrow \|\bar u(t)\|_{L^2(\omega)} = 0,\\\displaystyle\text{if } \|\bar u(t)\|_{L^2(\omega)} = 0&\Rightarrow \|\bar\varphi(t)\|_{L^2(\omega)} \le \kappa.\end{array}\right.
\label{E5.5}
\end{equation}
\label{C5.1}
\end{corollary}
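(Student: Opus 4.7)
The plan is to read off both implications directly from the optimality identity \eqref{E5.3} together with inequality \eqref{E5.4} and the properties of $\bar\lambda$ and $\bar\mu$ already collected in the text.

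For the first implication, I would start from the identity $\bar\varphi_{\mid Q_\omega}(t) = -\kappa\bar\lambda(t) - \bar\mu(t)$, take $L^2(\omega)$-norms, and invoke \eqref{E5.4} to obtain
\[
\|\bar\varphi(t)\|_{L^2(\omega)} = \|\kappa\bar\lambda(t) + \bar\mu(t)\|_{L^2(\omega)} \ge \kappa\|\bar\lambda(t)\|_{L^2(\omega)}
\]
for almost every $t$. If $\|\bar u(t)\|_{L^2(\omega)} \neq 0$, then $\bar\lambda \in \partial j(\bar u)$ combined with \eqref{E4.49} (the analogue of \eqref{E3.5}) forces $\bar\lambda(x,t) = \bar u(x,t)/\|\bar u(t)\|_{L^2(\omega)}$, so that $\|\bar\lambda(t)\|_{L^2(\omega)} = 1$; hence $\|\bar\varphi(t)\|_{L^2(\omega)} \ge \kappa$. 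Taking the contrapositive gives the first line of \eqref{E5.5}.

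For the second implication, assume $\|\bar u(t)\|_{L^2(\omega)} = 0$. I would show separately for the two admissible choices of $\K$ that $\bar\mu(t) = 0$ in $L^2(\omega)$: in the ball case \eqref{E1.2} the analogue of \eqref{E3.7} applies because $0<\gamma$; in the box case \eqref{E1.3} the analogue of \eqref{E3.9} yields $\bar\mu(x,t)=0$ pointwise, since $\alpha<0<\beta$ puts $\bar u(x,t)=0$ strictly inside the bounds. Inserting $\bar\mu(t)=0$ into \eqref{E5.3} gives $\bar\varphi_{\mid Q_\omega}(t) = -\kappa\bar\lambda(t)$, and the bound $\|\bar\lambda(t)\|_{L^2(\omega)} \le 1$ (valid for any $\bar\lambda\in\partial j(\bar u)$) yields $\|\bar\varphi(t)\|_{L^2(\omega)} \le \kappa$.

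There is no real obstacle here: all needed ingredients — the pointwise characterization of $\bar\lambda$, the vanishing of $\bar\mu$ on the set where $\bar u$ is interior to $\K$, and the inequality \eqref{E5.4} that encodes the sign alignment $\sign\bar\lambda=\sign\bar\mu=\sign\bar u$ — are already available in the preceding discussion. The only mildly delicate point is justifying $\|\bar\mu(t)\|_{L^2(\omega)}=0$ when $\bar u(t)=0$, which is why the statement is restricted to the two specific choices \eqref{E1.2}--\eqref{E1.3} rather than being stated for a general bounded convex $\K$.
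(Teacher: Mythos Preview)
Your proposal is correct and follows essentially the same route as the paper: both argue the first implication by contrapositive via \eqref{E5.3}, \eqref{E5.4}, and $\|\bar\lambda(t)\|_{L^2(\omega)}=1$ when $\bar u(t)\neq 0$, and both deduce the second implication from $\bar\mu(t)=0$ (when $\bar u(t)=0$) together with $\|\bar\lambda(t)\|_{L^2(\omega)}\le 1$. Your write-up is slightly more explicit about the case distinction for $\K$ when justifying $\bar\mu(t)=0$, but this is already recorded in the discussion preceding the corollary.
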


\begin{proof}
Let us assume that $\|\bar u(t)\|_{L^2(\omega)} \neq 0$, then \eqref{E4.49} implies that $\|\bar\lambda(t)\|_{L^2(\omega)} = 1$. From this fact, \eqref{E5.3}, and \eqref{E5.4} we infer
\[
\|\bar\varphi(t)\|_{L^2(\omega)} \ge \kappa\|\bar\lambda(t)\|_{L^2(\omega)} = \kappa.
\]
This proves the first implication of \eqref{E5.5}. Assume now that $\|\bar u(t)\|_{L^2(\omega)} = 0$. As mentioned above, this implies that $\|\bar\mu(t)\|_{L^2(\omega)} = 0$. Hence, from \eqref{E5.3} and \eqref{E4.49} we deduce that $\|\bar\varphi(t)\|_{L^2(\omega)} = \frac{1}{\kappa}\|\bar\lambda(t)\|_{L^2(\omega)} \le \kappa$.
\end{proof}

\begin{corollary}
Let $\K$ be given by \eqref{E1.2} or \eqref{E1.3}. Then, there exists $T_0 < \infty$ such that $\bar u(x,t) = 0$ for almost all $x \in \omega$ and $t \ge T_0$.
\label{C5.2}
\end{corollary}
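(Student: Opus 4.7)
The plan is to combine Corollary \ref{C5.1} with the decay of the adjoint state $\bar\varphi$ at infinity established in Corollary \ref{C4.1}. Corollary \ref{C5.1} tells us that whenever $\|\bar\varphi(t)\|_{L^2(\omega)} < \kappa$, the optimal control satisfies $\|\bar u(t)\|_{L^2(\omega)} = 0$, which gives $\bar u(x,t) = 0$ for a.a.\ $x\in\omega$. So the entire task reduces to finding $T_0$ after which the $L^2(\omega)$-norm of the adjoint lies strictly below $\kappa$.

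The key observation is that $\omega \subset \Omega$, so the trivial pointwise estimate
\[
\|\bar\varphi(t)\|_{L^2(\omega)} \le \|\bar\varphi(t)\|_{L^2(\Omega)}
\]
holds for every $t$ for which $\bar\varphi(t)$ is defined (which is a.e.\ $t$, and in fact every $t\ge 0$ by the continuity of $\bar\varphi:[0,\infty)\to L^2(\Omega)$ inherited from $\bar\varphi_{\mid Q_T}\in W(0,T)\subset C([0,T];L^2(\Omega))$ for every $T$). By Corollary \ref{C4.1}, $\lim_{t\to\infty}\|\bar\varphi(t)\|_{L^2(\Omega)} = 0$, so in particular $\lim_{t\to\infty}\|\bar\varphi(t)\|_{L^2(\omega)} = 0$ as well.

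Fix $\varepsilon = \kappa/2 > 0$. Then there exists $T_0 < \infty$ such that $\|\bar\varphi(t)\|_{L^2(\omega)} \le \|\bar\varphi(t)\|_{L^2(\Omega)} < \kappa/2 < \kappa$ for all $t \ge T_0$. Applying the first implication of \eqref{E5.5} in Corollary \ref{C5.1} on the set $\{t \ge T_0\}$ yields $\|\bar u(t)\|_{L^2(\omega)} = 0$ for almost all $t \ge T_0$, which is equivalent to $\bar u(x,t) = 0$ for almost every $(x,t)\in \omega\times (T_0,\infty)$, as claimed.

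There is no real obstacle here: the only ingredients are the scalar comparison of norms, the decay property \eqref{E4.29}, and the sparsity dichotomy \eqref{E5.5}. The substantive work — namely, the existence of $\bar\varphi$ with vanishing $L^2(\Omega)$-norm at infinity and the first implication of Corollary \ref{C5.1} — has already been carried out in Sections 4 and 5.
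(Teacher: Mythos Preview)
Your argument is correct and matches the paper's own justification, which simply states that the corollary is a straightforward consequence of \eqref{E5.5} and \eqref{E4.29}. You have merely spelled out the trivial chain $\|\bar\varphi(t)\|_{L^2(\omega)} \le \|\bar\varphi(t)\|_{L^2(\Omega)} \to 0$ and applied the first implication of Corollary~\ref{C5.1}, exactly as intended.
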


This corollary is a straightforward consequence of \eqref{E5.5} and \eqref{E4.29}. This shows that the optimal control shuts down to zero in finite time.  It is due to the appearance of the non-smooth term in the cost functional.

\end{document}